\tikzset{every edge quotes/.style =
          { fill = white,
            sloped,
            execute at begin node = $,
            execute at end node   = $  }}
\newtheorem{dummy}{}[section]
\newtheorem{thm}[dummy]{Theorem}
\newtheorem{prop}[dummy]{Proposition}
\newtheorem{pr}[dummy]{Proposition}
\newtheorem{lem}[dummy]{Lemma}
\newtheorem{cor}[dummy]{Corollary}
\theoremstyle{definition}
\newtheorem{definition}[dummy]{Definition}
\newtheorem{dfn}[dummy]{Definition}
\theoremstyle{remark}
\newtheorem{rmk}[dummy]{Remark}
\newtheorem{ex}[dummy]{Example}
\newtheorem{obs}[dummy]{Observation}
\newcommand{\alt}{{\text{alt}}}
\newcommand{\tw}{\text{tw}}
\newcommand{\twE}{\text{tw}}
\newcommand{\oQMb}{{\overline{\mathcal{QM}}}}
\newcommand{\oJMb}{{\overline{\mathcal{JM}}}}
\numberwithin{equation}{section}
\DeclareMathOperator{\Id}{Id}
\def\M{{{\mathcal M}}}
\def\Mbar{{\overline{\mathcal M}}}
\def\Mstar{{{\mathcal M^*}}}
\def\Mbarstar{{\overline{\mathcal M^*}}}
\def\GPI{{\text{GPI}}}
\def\PI{{\text{PI}}}
\def\h{{\mathfrak h}}
\DeclareMathOperator*{\bboxplus}{\scalerel*{\boxplus}{\sum}}
\title{The point insertion technique and open $r$-spin theories I: moduli and orientation}
\date{}
\author{Ran J. Tessler \and Yizhen Zhao}
\begin{document}
\maketitle

\begin{abstract}
The papers \cite{BCT1,BCT2,BCT3,GKT} constructed an intersection theory on the moduli space of $r$-spin disks, and proved it satisfies mirror symmetry and relations with integrable hierarchies. That theory considered only disks with a single type of boundary state.
In this work, we initiate the study of more general $r$-spin surfaces:
we define the notion of graded $r$-spin surfaces with multiple internal and boundary states, and their moduli spaces. 

In $g=0,$ the disk case, we also define the associated open Witten bundle, and prove that the Witten bundle is canonically oriented relative to the moduli space.
Moreover, we describe a method for gluing several moduli spaces along certain boundaries, show that gluing lifts to the Witten bundle and relative cotangent line bundles, and that the result is again canonically relatively oriented.

{We then turn to $g=1,$ the cylinder case. In this case there are foundational problems in constructing the theory, whose origin is the fact that the Witten ``bundle'' ceases to be an orbifold vector bundle. We overcome this by removing the strata in which fibres are not of expected dimension, thus obtaining an orbibundle over the complement. We then extend the gluing method to $g=1$, and prove that also in $g=1$ the Witten bundle has canonical relative orientation.}

In the sequel \cite{TZ2}, we construct, based on the work of this paper, a family of $\lfloor r/2\rfloor$ intersection theories indexed by $\h\in\{0,\ldots,\lfloor r/2\rfloor-1\},$ where the $\h$-th one has $\h+1$ possible boundary states, and calculate their intersection numbers. The $\h=0$ theory is equivalent to the one constructed in \cite{BCT1,BCT2}.

{In the sequel \cite{TZ3} we rely on this construction, restricted to the $\h=0$ case, to construct an intersection theory on the moduli space of $r$-spin cylinders, and prove that its potential, after a change of variables, yields the $g=1$ part of the $r$th Gelfand--Dikii wave function, thus prove the $g=1$ part of the main conjecture of \cite{BCT3}.}

\end{abstract}

\section{Introduction}
A \emph{(smooth) $r$-spin curve} is a smooth marked curve $(C;z_1, \ldots, z_n)$ endowed with an \emph{$r$-spin structure}, which is a line bundle $S\to C,$ together with an identification
\[S^{\otimes r} \cong \omega_{C}\left(-\sum_{i=1}^n a_i[z_i]\right),\]
where $a_i \in \{0,1,\ldots, r-1\}$ are called the twists.  The moduli space $\mathcal{M}_{g,\{a_1, \ldots, a_n\}}^{1/r}$ of smooth $r$-spin curves has a natural compactification, the \emph{moduli space of stable $r$-spin curve} $\Mbar_{g,\{a_1, \ldots, a_n\}}^{1/r}.$ 
This space admits a virtual fundamental class $c_W$, the \textit{Witten's class}. 
Let $\pi: \mathcal{C} \to \Mbar_{0,\{a_1, \ldots, a_n\}}^{1/r}$ be the universal curve and write $\mathcal{S}$ for the universal $r$-spin structure. In genus zero
$(R^1\pi_*\mathcal{S})^{\vee}$ is an (orbifold) vector bundle, and the Witten class is just its Euler class,
\[c_W= e((R^1\pi_*\mathcal{S})^{\vee}).\] The definition of $c_W$ in higher genus is more involved, see \cite{PV,ChiodoWitten,Moc06,FJR,CLL} for various constructions.

Witten \cite{Witten93} defined the \textit{(closed) $r$-spin intersection numbers} by
\[\left<\tau^{a_1}_{d_1}\cdots\tau^{a_n}_{d_n}\right>^{\frac{1}{r},c}_g:=r^{1-g}\int_{\Mbar^{1/r}_{g,\{a_1, \ldots, a_n\}}} \hspace{-1cm} c_W \cap \psi_1^{d_1} \cdots \psi_n^{d_n},
\]
where $\psi_i=c_1(\mathbb{L}_i), \in H^2(\Mbar^{1/r}_{g,\{a_1, \ldots, a_n\}}),~i=1,\ldots, n,$ are the first Chern classes of the relative cotangent lines at the markings. This theory is an important example of a \emph{cohomological field theory} \cite{KontsManin}, motivated the definition of Fan--Jarvis--Ruan--Witten (FJRW) \cite{FJR} of quantum singularities, and led to a proof of Pixton's conjecture on the tautological rings of ~$\Mbar_{g,n}$~\cite{PPZ}. It was also conjectured by Witten to be governed by the Gelfand-Dikii hierarchy \cite{Witten93}, a conjecture proven by Faber, Shadrin, and Zvonkine in \cite{FSZ10}.

The study of similar intersection theories on moduli spaces of surfaces with boundaries was initiated in \cite{PST14}. In \cite{BCT1,BCT2,BCT3} Buryak, Clader and the first named author constructed a disk analogue of the $r$-spin intersection theory, proved that it is governed by the Gelfand-Dikii wave function, and made an all-genus conjecture. 
This theory allowed the twists at internal markings range in $\{0,\ldots,r-1\}$ but the boundary markings were only allowed an $r-2$ twist. One might hope that more general intersection theories exist, which allow defining intersection numbers with more types of boundary twists. In this paper and its sequel \cite{TZ2} we construct such theories.
\subsection{The content of this paper}

Our first objects of study are \emph{graded $r$-spin surfaces}, which are, roughly speaking, the following objects. Let $C$ be an orbifold curve equipped with a conjugation $\phi: C \to C$ under which the coarse underlying curve $|C|$ is realized as the union of a surface with boundary $\Sigma$ and its conjugate $\overline{\Sigma},$ glued along their common boundary:
\[|C| = \Sigma \cup_{{\partial}\Sigma} \overline{\Sigma}.\]
Let $x_1, \ldots, x_k \in {\partial}\Sigma$ be a collection of (different) boundary marked points, let $z_1, \ldots, z_l \in \Sigma \setminus {\partial}\Sigma$ be a collection of (different) internal marked points, and let $\overline{z}_i:= \phi(z_i) \in \overline{\Sigma}$ be their conjugates.  A \emph{graded $r$-spin structure} with \emph{boundary twists} $b_1,\ldots,b_k$ and \emph{internal twists} $a_1, \ldots, a_l$ is an orbifold line bundle $S$ on $C$ together with an isomorphism
\[|S|^{\otimes r} \cong \omega_{|C|} \otimes {\mathcal{O}}\left(-\sum_{i=1}^l a_i[z_i] - \sum_{i=1}^l a_i[\overline{z}_i] - \sum_{j=1}^k b_j[x_j]\right)\]
on $|C|$, an involution $\widetilde{\phi}: S \to S$ lifting $\phi$, and a \emph{grading}, a certain orientation of $\left(S|_{{\partial}\Sigma \setminus \{x_j\}}\right)^{\widetilde\phi}$. 

We prove below that there exists a moduli space $\Mbar_{g,\{b_1,\ldots,b_k\},\{a_1, \ldots, a_l\}}^{1/r}$ of graded $r$-spin surfaces with boundary twists $b_1,\ldots,b_k$ and internal twists $a_1, \ldots, a_l$. We show that this moduli space is a smooth, effective, compact, orientable orbifold with corners.

This moduli space is also associated with several natural orbifold vector bundles.
First, there are the relative cotangent line bundles $\mathbb{L}_1 \ldots, \mathbb{L}_l$ at the internal marked points. We then restrict to $g=0,1$ (disk and cylinder case), and describe the \emph{open Witten bundle}, the real vector bundle, defined as
\[
\mathcal{W}: = (R^0\pi_*(\mathcal{S}^{\vee} \otimes \omega_{\pi}))_+,
\]
where '$+$'  denote the spaces of $\widetilde{\phi}$-invariant sections. {In the cylinder case this object is a vector bundle only after restricting to the complement of a subspace, which we fully characterize.}
We study the behaviour of these objects under restrictions to boundary strata, and, in particular, under a certain identification of boundary strata induced from a key operation we consider, the \emph{point insertion operation} defined in Section \ref{sec:point_ins}.
The point insertion provides a natural identification between certain boundary strata of one moduli space, with boundary strata of other moduli spaces, in which some special boundary points are replaced by internal points. 

For every $\h\in\{0,1,\ldots,\lfloor r/2\rfloor-1\}$, we define a new moduli space $\widetilde{\mathcal M}^{\frac{1}{r},\h}_{g,\{b_1,\ldots,b_k\},\{a_1,\ldots,a_l\}},$ where $b_1,\ldots,b_k\in\{r-2,r-4,\ldots,r-2-2\h\}$ and $a_1,\ldots,a_l\in\{0,\ldots,r-1\},$ which is obtained from $\overline{\mathcal M}^{\frac{1}{r},\h}_{g,\{b_1,\ldots,b_k\},\{a_1,\ldots,a_l\}}$ by gluing different moduli spaces along certain codimension-$1$ boundaries, in a way dictated by the point insertion operation.   We prove that ${\mathcal{W}}$ and each $\mathbb{L}_i,~i=1,\ldots, l$ also glue, giving rise to bundles defined over $\widetilde{\mathcal M}^{\frac{1}{r},\h}_{g,\{b_1,\ldots,b_k\},\{a_1,\ldots,a_l\}}$ when $g=0,$ or over an explicit subspace of it  when $g=1$.  Our main result is that all bundles
$$(\widetilde{{\mathcal{W}}})^{2d+1}\oplus\bigoplus_{i=1}^l\mathbb{L}_i^{\oplus d_i}
$$
are \emph{canonically oriented relative to }$\widetilde{\mathcal M}^{\frac{1}{r},\h}_{0,\{b_1,\ldots,b_k\},\{a_1,\ldots,a_l\}}.$ See Theorem \ref{thm:main_or} for a precise statement.

\subsection{Sequels}
In the sequel \cite{TZ2}, for each $\h\in\{0,1,\ldots,\lfloor r/2\rfloor-1\}$, we construct an intersection theory on the moduli space $\widetilde{\mathcal M}^{\frac{1}{r},\h}_{0,\{b_1,\ldots,b_k\},\{a_1,\ldots,a_l\}}.$ More precisely, we find canonical families of boundary conditions for the open Witten bundle and relative cotangent line bundles, and show that they give rise to intersection numbers, which are independent of all choices. 
We calculate all intersection numbers using a topological recursion relation, prove that the $\h=0$ theory is equivalent to the one constructed in \cite{BCT1,BCT2} and describe generalizations to other open intersection theories, which include the Fermat quintic open FJRW theory.

{In the sequel \cite{TZ3} we study the $g=1,\h=0$ case, and fully construct an intersection theory for it. We prove that its potential, after a change of variables, satisfies the equations of the wave function of the $r$th Gelfand--Dikii hierarchy.}

\subsection{Plan of the paper and comparison to existing literature}
Section \ref{sec review} reviews the notion of graded $r$-spin surfaces, their properties and moduli space, and in $g=0,1$ also the open Witten bundle. In Section \ref{sec:orientation} we study the properties of the orientations of the moduli spaces in Witten bundles. The idea of point insertion is introduced in Section \ref{sec:point_ins}, as well as the glued moduli space, which is based on it.

In \cite{BCT1} Buryak, Clader and the first named author considered graded $r$-spin disks with only $r-2$ boundary twists. They showed that the moduli space is a smooth, compact orientable orbifold with corners, and that the Witten bundle is canonically oriented relative to it. Our results generalize their results to more general objects and moduli spaces. Our method of constructing the orientation is new and more direct.
The point insertion idea did not appear in the literature before, to the best of our knowledge.  
A simpler version of it, in the spinless case, is described in an unpublished text by Jake Solomon and the first named author \cite{ST_unpublished}.
The key new technical result is the study of the behaviour of the orientations with respect to the identification of boundaries dictated by the point insertions. The outcome of this study is that the bundles and space obtained by gluing moduli spaces along the identified boundaries are relatively canonically oriented. This stronger orientability result is much stronger than the one obtained in \cite{BCT1}, even in the case of only $r-2$ boundary twists.

\subsection{Acknowledgements}
The authors would like to thank A. Buryak, M.~Gross, T.~Kelly, K. Hori, J.~Solomon and E.~Witten for interesting discussions related to this work.  
R.T. (incumbent of the Lillian and George Lyttle Career Development Chair) and Y. Zhao were supported by the ISF grant No. 335/19 and 1729/23.

\section{Review of graded $r$-spin surfaces, their moduli and bundles}
\label{sec review}

In this section, following \cite{BCT1}, we review the definition of graded $r$-spin surfaces, their moduli space, and the relevant bundles.  More details and proofs can be found in \cite{BCT1}.

\subsection{Graded $r$-spin surfaces}

The basic objects in this work are marked Riemann surfaces with boundary; we view them as arising from closed curves with an involution.  More preciously, a \textit{nodal marked surface} is defined as a tuple
$$(C, \phi, \Sigma, \{z_i\}_{i \in I}, \{x_j\}_{j \in B}, m^I, m^B),$$
in which
\begin{itemize}
\item $C$ is a nodal orbifold Riemann surface, which may be composed of disconnected components, and it has isotropy only at special points;
\item $\phi: C \to C$ is an anti-holomorphic involution which, from a topological perspective, realizes the coarse underlying Riemann surface $|C|$  as the union of two Riemann surfaces, $\Sigma$ and $\overline{\Sigma}=\phi(\Sigma),$ glued along the common subset $\text{Fix}(|\phi|)$;
\item $\{z_i\}_{i \in I}\subset  C$ consists of distinct \textit{internal marked points} whose images in $|C|$ lie in $\Sigma\setminus\text{Fix}(|\phi|)$, and they have \textit{conjugate marked points} $\overline{z_i}:= \phi(z_i)$;
\item $\{x_j\}_{j \in B} \subset \text{Fix}(\phi)$ consists of distinct \textit{boundary marked points} whose images in $|C|$ lie in ${\partial} \Sigma$;
\item $m^I$ (respectively $m^B$) is a marking of $I$ and (respectively $B$), \textit{i.e.} an one-to-one equivalence between  $I$ (respectively $m^B$) and $\{1,2,\dots,\lvert I \rvert\}$ (respectively $\{1,2,\dots,\lvert B \rvert\}$).
\end{itemize}

A node of a nodal marked surface can be internal, boundary or contracted boundary, both internal and boundary nodes can be separating or non-separating, so there are five types of nodes, as illustrated in Figure~\ref{fig node type} by shading $\Sigma \subseteq |C|$ in each case.  Note that ${\partial}\Sigma\subset\text{Fix}(|\phi|)$ is a union of circles, and $\text{Fix}(|\phi|)\setminus{\partial}\Sigma$ is the union of the contracted boundaries.

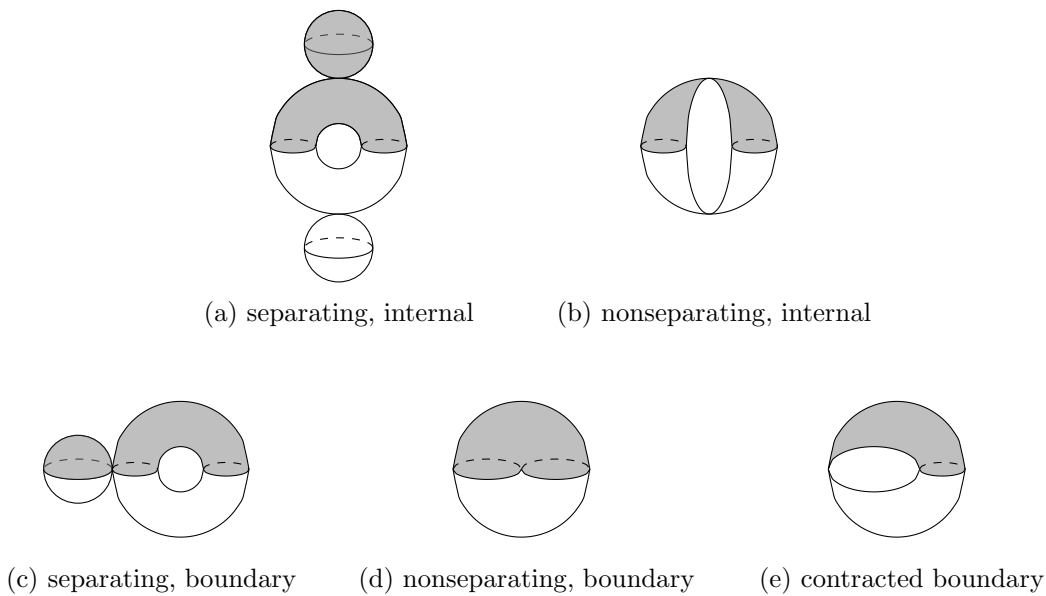
\begin{figure}[h]
\centering

  \begin{subfigure}{.3\textwidth}
  \centering

\begin{tikzpicture}[scale=0.3]
   \draw[ domain=-1:1,  smooth, variable=\x] plot ({\x}, {sqrt(1-\x*\x)});
   \draw[ domain=-3:3,   smooth, variable=\x] plot ({\x}, {sqrt(9-\x*\x)});

   \draw[ domain=-1:1, smooth, variable=\x] plot ({\x}, {-sqrt(1-\x*\x)});
   \draw[ domain=-3:3,  smooth, variable=\x] plot ({\x}, {-sqrt(9-\x*\x)});

    \draw [fill=gray,fill opacity=0.5] plot [domain=-1:1,  smooth]({\x}, {sqrt(1-\x*\x)}) --plot [domain=1:3,  smooth]({\x}, {-sqrt(1-(\x-2)*(\x-2))*0.3}) -- plot [domain=3:-3,  smooth ] ({\x}, {sqrt(9-\x*\x)})-- plot [domain=-3:-1,  smooth]({\x}, {-sqrt(1-(\x+2)*(\x+2))*0.3});

    \draw  [dashed, domain=1:3,  smooth] plot ({\x}, {sqrt(1-(\x-2)*(\x-2))*0.3});

    \draw  [dashed, domain=-3:-1,  smooth] plot ({\x}, {sqrt(1-(\x+2)*(\x+2))*0.3});

    \draw (0,4.5) circle (1.5);
    \draw (-1.5,4.5) arc (180:360:1.5 and 0.45);
    \draw[dashed] (1.5,4.5) arc (0:180:1.5 and 0.45);

     \draw (0,-4.5) circle (1.5);
    \draw (-1.5,-4.5) arc (180:360:1.5 and 0.45);
    \draw[dashed] (1.5,-4.5) arc (0:180:1.5 and 0.45);

    \draw [fill=gray,fill opacity=0.5] (0,4.5) circle (1.5);
   
\end{tikzpicture}

  \caption{separating, internal }
\end{subfigure}
  \begin{subfigure}{.3\textwidth}
  \centering

\begin{tikzpicture}[scale=0.3]

   \draw[ domain=-3:3,   smooth, variable=\x] plot ({\x}, {-sqrt(9-\x*\x)});
  \draw [fill=gray,fill opacity=0.5] plot [domain=-1:1,  smooth]({\x}, {sqrt(1-\x*\x)*3}) --plot [domain=1:3,  smooth]({\x}, {-sqrt(1-(\x-2)*(\x-2))*0.3}) -- plot [domain=3:-3,  smooth ] ({\x}, {sqrt(9-\x*\x)})-- plot [domain=-3:-1,  smooth]({\x}, {-sqrt(1-(\x+2)*(\x+2))*0.3});

  \draw  [dashed, domain=1:3,  smooth] plot ({\x}, {sqrt(1-(\x-2)*(\x-2))*0.3});

    \draw  [dashed, domain=-3:-1,  smooth] plot ({\x}, {sqrt(1-(\x+2)*(\x+2))*0.3});

    \draw[ domain=-1:1,   smooth, variable=\x] plot ({\x}, {-sqrt(1-\x*\x)*3});
\end{tikzpicture}
\vspace{0.9cm}
  \caption{non-separating, internal }
\end{subfigure}

\begin{subfigure}{.3\textwidth}
  \centering
\vspace{0.9cm}
\begin{tikzpicture}[scale=0.3]
  \draw[ domain=-1:1, smooth, variable=\x] plot ({\x}, {-sqrt(1-\x*\x)});
   \draw[ domain=-3:3,  smooth, variable=\x] plot ({\x}, {-sqrt(9-\x*\x)});

    \draw [fill=gray,fill opacity=0.5] plot [domain=-1:1,  smooth]({\x}, {sqrt(1-\x*\x)}) --plot [domain=1:3,  smooth]({\x}, {-sqrt(1-(\x-2)*(\x-2))*0.3}) -- plot [domain=3:-3,  smooth ] ({\x}, {sqrt(9-\x*\x)})-- plot [domain=-3:-1,  smooth]({\x}, {-sqrt(1-(\x+2)*(\x+2))*0.3});

    \draw  [dashed, domain=1:3,  smooth] plot ({\x}, {sqrt(1-(\x-2)*(\x-2))*0.3});

    \draw  [dashed, domain=-3:-1,  smooth] plot ({\x}, {sqrt(1-(\x+2)*(\x+2))*0.3});

    \draw (-4.5,0) circle (1.5);
    \draw (-6,0) arc (180:360:1.5 and 0.45);
    \draw[dashed] (-3,0) arc (0:180:1.5 and 0.45);
    \draw[fill = gray, opacity = 0.5] (-6,0) arc (180:360:1.5 and 0.45) arc (0:180:1.5);
\end{tikzpicture}
\vspace{0.15cm}

  \caption{separating, boundary }
\end{subfigure}
\begin{subfigure}{.3\textwidth}
  \centering

\begin{tikzpicture}[scale=0.3]
\vspace{0.15cm}

   \draw[ domain=-3:3,  smooth, variable=\x] plot ({\x}, {-sqrt(9-\x*\x)});

    \draw [fill=gray,fill opacity=0.5] plot [domain=0:3,  smooth]({\x}, {-sqrt(2.25-(\x-1.5)*(\x-1.5))*0.3}) -- plot [domain=3:-3,  smooth ] ({\x}, {sqrt(9-\x*\x)})-- plot [domain=-3:0,  smooth]({\x}, {-sqrt(2.25-(\x+1.5)*(\x+1.5))*0.3});

    \draw  [dashed, domain=0:3,  smooth] plot ({\x}, {sqrt(2.25-(\x-1.5)*(\x-1.5))*0.3});

    \draw  [dashed, domain=-3:0,  smooth] plot ({\x}, {sqrt(2.25-(\x+1.5)*(\x+1.5))*0.3});
  
\end{tikzpicture}
\vspace{0.15cm}

  \caption{non-separating, boundary}
\end{subfigure}
\begin{subfigure}{.3\textwidth}
  \centering

\begin{tikzpicture}[scale=0.3]
\vspace{0.15cm}
  \draw[ domain=-3:3,  smooth, variable=\x] plot ({\x}, {-sqrt(9-\x*\x)});

    \draw [fill=gray,fill opacity=0.5] plot [domain=1:3,  smooth]({\x}, {-sqrt(1-(\x-2)*(\x-2))*0.3}) -- plot [domain=3:-3,  smooth ] ({\x}, {sqrt(9-\x*\x)})-- plot [domain=-3:1,  smooth]({\x}, {sqrt(4-(\x+1)*(\x+1))*0.5});

    \draw  [dashed, domain=1:3,  smooth] plot ({\x}, {sqrt(1-(\x-2)*(\x-2))*0.3});

    \draw  [ domain=-3:1,  smooth] plot ({\x}, {-sqrt(4-(\x+1)*(\x+1))*0.5});
\end{tikzpicture}
\vspace{0.15cm}

  \caption{contracted boundary }
\end{subfigure}

\caption{The five types of nodes on a nodal marked surface.}
\label{fig node type}
\end{figure}

An \textit{anchored nodal marked surface} is a nodal marked surface together with a $\phi$-invariant choice of a distinguished internal marked point (called the \textit{anchor}) on each connected component $C'$ of $C$ that is disjoint from the set $\text{Fix}(\phi)$. We denote by $Anc\subseteq I$ the set of indexes of anchors lying on $\Sigma$. 
\begin{rmk}
We focus mainly on surfaces with boundaries and their degenerations, \textit{i.e.} connected nodal marked surfaces with nonempty $\text{Fix}(\phi)$. For a connected component $C'$ of a nodal marked surface $C$, if $C'$ does not intersect with the set $\text{Fix}(\phi)$, we view $C'$ as obtained by normalizing a separating internal or contracted boundary node. 
An anchor $z_i$ in a connected component $C'$ should be considered as the half-node corresponding to the separating node that we normalize to obtain $C'$.
\end{rmk}

Let $C$ be an anchored nodal marked surface with
order-$r$ cyclic isotopy groups at markings and nodes, a \textit{$r$-spin structure} on $C$ is 
\begin{itemize}
    \item 
 an orbifold complex line bundle $L$ on $C$,
\item an isomorphism 
$$\kappa:L^{\otimes r} \cong \omega_{C,log}:=\omega_{C} \otimes {\mathcal{O}}\left(-\sum_{i \in I}  [z_i] - \sum_{i \in I}  [\overline{z_i}] - \sum_{j \in B} [x_j]\right),$$

\item an involution $\widetilde{\phi}: L \to L$ lifting $\phi$.
\end{itemize}
The local isotopy of $L$ at a point $p$ is characterized by an integer $\operatorname{mult}_p(L)\in \{0,1,\dots,r-1\}$ in the following way: the local structure of the total space of $L$ near $p$ is $[\mathbb C^2/(\mathbb Z/r\mathbb Z)]$, where the canonical generator $\xi \in \mathbb Z/r\mathbb Z$ acts by $\xi\cdot (x,y)=(\xi x, \xi^{\operatorname{mult}_p(L)}y)$.
We denote by $RI\subseteq I$ and $RB\subseteq B$ the subsets of internal and boundary marked points $p$ satisfying $\operatorname{mult}_p(L)=0$. An \textit{associated twisted r-spin structure} $S$ is defined by
$$
S:=L\otimes \mathcal O\left( -\sum_{i \in \widetilde{RI}} r [z_i] - \sum_{i \in \widetilde{RI}} r [\overline{z_i}] - \sum_{j \in RB} r [x_j]\right),
$$
where $\widetilde{RI}\subseteq RI$ is a subset satisfying $ RI\setminus \widetilde{RI}\subseteq Anc$.

For an internal marked point $z_i$, we define the  \textit{internal twist} at $z_i$ to be $a_i:=\operatorname{mult}_{z_i}(L)-1$ if $i\in I\setminus \widetilde{RI}$ and $a_i:=r-1$ if $i\in \widetilde{RI}$. For a boundary marked point $x_i$, we define the  \textit{boundary twist} at $x_j$ as $b_j:=\operatorname{mult}_{x_j}(L)-1$ if $i\in B\setminus {RB}$ and as $b_j:=r-1$ if $j\in {RB}$. Note that all the marked points with twist $-1$ are indexed by $RI\setminus \widetilde{RI}\subseteq Anc$. When the surface $C$ is smooth, the coarse underlying bundle $|S|$ over the coarse underlying curve $|C|$ satisfies
$$|S|^{\otimes r} \cong \omega_{|C|} \otimes {\mathcal{O}}\left(-\sum_{i \in I} a_i [z_i] - \sum_{i \in I} a_i [\overline{z_i}] - \sum_{j \in B} b_j[x_j]\right).$$

\begin{obs}
\label{obs rank open}
A connected genus-$g$ nodal marked surface admits a twisted $r$-spin structure with twists $a_i$ and $b_j$ if and only if
\begin{equation}\label{eq rank open}
\frac{2\sum_{i \in I} a_i + \sum_{j\in B} b_j+ (g-1)(r-2)}{r}\in \mathbb Z.
\end{equation}
This formula is the specialization to our setting of the more well-known fact \cite{Witten93}: a (closed) connected nodal marked genus-$g$ curve admits a twisted $r$-spin structure twists $a_i$ if and only if 
\begin{equation}\label{eq rank close}
\frac{\sum_{i\in I} a_i +(g-1)(r-2)}{r}\in {\mathbb{Z}}.
\end{equation}
\end{obs}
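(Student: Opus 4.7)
The plan is to reduce both directions to the closed-case criterion (2.2) applied to the doubled curve $|C|=\Sigma\cup_{\partial\Sigma}\overline\Sigma$. Viewing $|C|$ as a closed connected genus-$g$ nodal curve with $2|I|+|B|$ markings --- the pairs $\{z_i,\overline{z_i}\}$ with twists $a_i$ each and the $\{x_j\}$ with twists $b_j$ --- the closed condition (2.2) specializes to
$$\frac{2\sum_{i\in I}a_i+\sum_{j\in B}b_j+(g-1)(r-2)}{r}\in\mathbb Z,$$
which is precisely (2.1). So once one knows that a graded $r$-spin structure on $C$ corresponds to a suitably symmetric $r$-spin structure on $|C|$, the two formulas coincide.

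For the necessary direction, if a twisted $r$-spin structure $S$ on $C$ exists, its coarse bundle $|S|$ is an ordinary line bundle on $|C|$ satisfying the displayed isomorphism $|S|^{\otimes r}\cong\omega_{|C|}\otimes\mathcal O(-D)$, where $D=\sum_ia_i([z_i]+[\overline{z_i}])+\sum_jb_j[x_j]$. Taking degrees and using $\deg\omega_{|C|}=2g-2$ gives $r\deg|S|=2g-2-2\sum a_i-\sum b_j$; after the congruence $-2(g-1)\equiv(g-1)(r-2)\pmod r$, integrality of $\deg|S|$ is equivalent to (2.1).

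For sufficiency, condition (2.2) applied to $|C|$ with the doubled twists produces a line bundle $M$ on $|C|$ with $M^{\otimes r}\cong\omega_{|C|}\otimes\mathcal O(-D)$. Both sides of this isomorphism are $\phi$-equivariant, so $\phi^*M$ is another $r$-th root and differs from $M$ by a class in $\operatorname{Pic}(|C|)[r]$; a torsor argument on the $r$-torsion of the Jacobian allows one to modify $M$ so that $\phi^*M\cong M$, and then a lift $\widetilde\phi$ of $\phi$ to $M$ (equivalently, to the associated orbifold line bundle $L$ on $C$ obtained by twisting at the markings with the prescribed isotropy) is constructed as in \cite{BCT1}. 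The main obstacle I anticipate is the equivariant choice of $M$ for nodal, reducible configurations: one must distribute the twist data across components consistently and match isotropy at nodes. However, the techniques of \cite{BCT1} handle this verbatim once the degree condition (2.1) is known, so no new difficulty arises beyond the closed case.
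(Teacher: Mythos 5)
Your proposal matches the paper's (implicit) argument for the ``only if'' direction: the paper offers no proof beyond the remark that \eqref{eq rank open} specializes \eqref{eq rank close}, which is exactly your observation that the double $|C|=\Sigma\cup_{\partial\Sigma}\overline\Sigma$ is a closed, connected, genus-$g$ nodal curve with marking set $\{z_i,\overline{z_i},x_j\}$ and twists $\{a_i,a_i,b_j\}$, so that the degree of a hypothetical $|S|$ is $\tfrac{1}{r}(2g-2-2\sum a_i-\sum b_j)$, and integrality of this is equivalent to \eqref{eq rank open} via $(g-1)(r-2)\equiv 2-2g\pmod r$. That computation is correct and is the whole content of the Observation as the paper presents it.

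Where you go beyond the paper is the ``if'' direction, and there the torsor argument as stated has a genuine gap that you correctly anticipate. Writing $\sigma:=\overline{\phi^*}$ for the induced involution on $\operatorname{Pic}(|C|)[r]$, the class $\alpha$ with $\sigma(M)\cong M\otimes\alpha$ lies in $\ker(1+\sigma)$, and replacing $M$ by $M\otimes\beta$ to make it $\sigma$-invariant requires $\alpha\in\operatorname{im}(1-\sigma)$; that is, one needs the class of $\alpha$ in $H^1({\mathbb{Z}}/2,\operatorname{Pic}(|C|)[r])$ to vanish, which is not automatic and must use that $C$ is connected (so $\operatorname{Fix}(\phi)\neq\emptyset$). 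Even after arranging $\sigma(M)\cong M$ one must further choose the isomorphism so that the resulting antilinear lift $\widetilde\phi$ squares to the identity and is compatible with $\kappa$. The paper does not resolve any of this at the level of Observation~\ref{obs rank open}; the existence side is instead absorbed into the construction of $\Mbarstar_{g,k,l}^{1/r}$ in Theorem~\ref{thm:all_g_moduli}, where one takes fixed loci of the anti-holomorphic involution on the closed $r$-spin moduli and passes to covers, so nonemptiness is obtained by that route rather than by a torsor computation. Your deferral to \cite{BCT1} is consistent with this, so the proposal is compatible with the paper, just more explicit about where the real work in the sufficiency direction lives.
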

    
We can extend the definition of twists to \textit{half-nodes}. Let $n: \widehat{C} \to C$ be the normalization morphism. For a half-node $q\in \widehat{C}$, we denote by $\sigma_0(q)$ the other half-node corresponding to the same node $n(q)$ as $q$. The isotopies of $n^*L$ at $q$ and $\sigma_0(q)$ satisfy $$\operatorname{mult}_{q}(n^*L)+\operatorname{mult}_{\sigma_0(q)}(n^*L)\equiv0 \mod r.$$ It is important to note that $n^*S$ may not be a twisted $r$-spin structure (associated with $n^*L$), because its connected components could potentially contain too many marked points with twist $-1$ (note that marked points with twist $-1$ are anchors).  Nevertheless, there is a canonical way to choose a minimal subset $\mathcal{R}$ of the half-nodes making
\begin{equation}
\label{eq normalize S}
\widehat{S}:= n^* S \otimes {\mathcal{O}}\left(-\sum_{q \in \mathcal{R}} r [q]\right)
\end{equation}
a twisted $r$-spin structure: denoting by $\mathcal T$ the set of half-nodes $q$ of $C$ where $\operatorname{mult}_{q}(n^*L)=0$, we define
\begin{equation*}
\mathcal A:= \left\{q \in \mathcal T\colon 
\begin{array}{ccc}
    \text{$n(q)$ is a separating internal node; after normalizing}\\
    \text{the node $n(q)$, the half-node $\sigma_0(q)$ belongs to a connected }\\
    \text{component meeting $\text{Fix}(\phi)$ or containing an anchor.}
\end{array} \right\}
\end{equation*}
and set $\mathcal R := \mathcal T \setminus \mathcal A$. See \cite[Section 2.3]{BCT1} for more details. 
We define $c_t$, \textit{the twist of $S$ at a half-node} $h_t$, as $c_t:=\operatorname{mult}_{h_t}(n^*L)-1$ if $h_t \notin \mathcal R$ and as $c_t:=r-1$ if $h_t\in \mathcal R$.
For each irreducible component $C_l$ of $\widehat{C}$ with half-nodes $\{h_t\}_{t \in N_l}$, we have
$$\bigg(|\widehat{S}|\big|_{|C_l|}\bigg)^{\otimes r} \cong \omega_{|C_l|} \otimes {\mathcal{O}}\left(-\sum_{\substack{i \in I\\z_i\in C_l}} a_i[z_i] - \sum_{\substack{i \in I\\ \overline{z_i}\in C_l}} a_i [\overline{z_i}] - \sum_{\substack{j \in B\\ x_j\in C_l}} b_j[x_j] - \sum_{t \in N_l} c_t [h_t]\right);$$
note that in the case where $C_l$ intersects with $\partial \Sigma$, the set $\{h_t\}_{t \in N_l}$ is invariant under $\phi$, and the half-nodes conjugated by $\phi$ have the same twist.

Note that if $h_{t_1}=\sigma_0(h_{t_2})$, then
\begin{equation}\label{eq sum of twist at node}c_{t_1} + c_{t_2} \equiv -2 \mod r.\end{equation}
We say a node is \textit{Ramond} if one (hence both) of its half-nodes $h_t$ satisfy $c_t \equiv -1 \mod r$,  and it is said to be \textit{Neveu--Schwarz (NS)} otherwise. Note that if a node is Ramond, then both of its half-nodes lie in the set $\mathcal T$; moreover, a half-node has twist $-1$ if and only if it lies in $\mathcal A$. The set $\mathcal{R}$ in equation~\eqref{eq normalize S} is chosen in a way that each separating internal Ramond node has precisely one half-edge in $\mathcal{R}.$

Associated to each twisted $r$-spin structure $S$, we define a Serre-dual bundle $J:=S^{\vee} \otimes \omega_{C}$. Note that the involutions on $C$ and $L$ induce involutions on $S$ and $J$; by an abuse of notation, we denote the involutions on $S$ and $J$ also by $\widetilde{\phi}$.

For a nodal marked surface, the boundary ${\partial}\Sigma$ of $\Sigma$ is endowed with a well-defined orientation, determined by the complex orientation on the preferred half $\Sigma \subseteq |C|$. This orientation induces the notion of positivity for $\phi$-invariant sections of $\omega_{|C|}$ over ${\partial} \Sigma$: let $p$ be a point of $\partial \Sigma$ which is not a node, we say a section $s$ is \textit{positive} at a $p$ if, for any tangent vector $v \in T_p({\partial} \Sigma)$ in the direction of orientation, we have $\langle s(p), v \rangle > 0$, where $\langle -, - \rangle$ is the natural pairing between cotangent and tangent vectors.

Let $C$ be an anchored nodal marked surface, and let $A$ be the complement of the special points in $\partial \Sigma$.  We say a twisted $r$-spin structure on such $C$ is \textit{compatible on the boundary components} if there exists a $\widetilde{\phi}$-invariant section $v \in C^0\left(A, |S|^{\widetilde{\phi}}\right)$ 
(called a \textit{lifting} of $S$  on boundary components) such that the image of $v^{\otimes r}$ under the map on sections induced by the inclusion $|S|^{\otimes r} \to  \omega_{|C|}$ is positive.  
We say $w \in C^0\left(A, |J|^{\widetilde{\phi}}\right)$ is a \textit{Serre-dual lifting of $J$ on the boundary components with respect to $v$} if $\langle w, v \rangle \in C^0(A, \omega_{|C|})$ is positive, where $\langle -, - \rangle$ is the natural pairing between $|S|^{\vee}$ and $|S|$.  This $w$ is uniquely determined by $v$ up to positively scaling.

 The equivalence classes of liftings of $J$ (or equivalently, $S$) on the boundary components up to positively scaling can be considered as continuous sections of the $S^0$-bundle $\left(|J|^{\widetilde{\phi}}\setminus |J|_0\right)\big/ \mathbb R_+$ over $A$, where $|J|_0$ denotes the zero section of $|J|^{\widetilde{\phi}}$. Given an equivalence class $[w]$  of liftings, we say a boundary marked point or boundary half-node $x_j$ is \textit{legal}, or that $[w]$ \textit{alternates} at $x_j$, if $[w]$, as a section of $S^0$-bundle, cannot be continuously extended to $x_j$.
We say an equivalence class $[w]$ of liftings of $J$ on boundaries is a \textit{grading of a twisted $r$-spin structure on boundary components} if, for every Neveu--Schwarz boundary node, one of the two half-nodes is legal and the other is illegal. 
\begin{rmk}
    The requirement that every NS boundary node has one legal and one illegal half-node arises from the behaviour of a grading on boundary components during degenerations. See \cite{BCT1} for more details.
\end{rmk}

Let $q$ be a contracted boundary node of $C$, we say a twisted $r$-spin structure on $C$ is \textit{compatible} at $q$ if $q$ is Ramond and there exists a $\widetilde{\phi}$-invariant element $v \in |S|\big|_q$  (called a \textit{lifting} of $S$ at $q$) such that the image of $v^{\otimes r}$ under the map $|S|^{\otimes r}\big|_q \to \omega_{|C|}\big|_q$ is positive imaginary under the canonical identification of $\omega_{|C|}\big|_q$ with ${\mathbb{C}}$ given by the residue. See \cite[Definition 2.8]{BCT1} for more details.  Such a $v$ also admits a Serre-dual lifting, \textit{i.e.}  a $\widetilde{\phi}$-invariant $w \in |J|\big\vert_q$ such that $\langle v, w \rangle$ is positive imaginary.  We refer to the equivalence classes $[w]$ of such $w$ up to positively scaling as a \textit{grading at contracted boundary node $q$}.

We say a twisted $r$-spin structure is \textit{compatible} if it is compatible on boundary components and at all contracted boundary nodes.  A (total) \textit{grading} is the collection of grading on boundary components together with a grading at each contracted boundary node. We say a grading is \textit{legal} if every boundary marked point is legal.

As we will see in Section \ref{sec:orientation}, the grading is crucial in determining a canonical relative orientation for the Witten bundle, which is one key ingredient in defining open $r$-spin intersection numbers. In the sequel \cite{TZ2}, we will define canonical boundary conditions, again using the grading.

The relation between the twists and legality, and the obstructions to having a grading, are summarized in the following proposition.
\begin{prop}
\label{prop lifting}
\begin{enumerate}
\item\label{it lift odd exist} When $r$ is odd, any twisted $r$-spin structure is compatible, and there is a unique grading.
\item\label{it lift odd legal} Suppose $r$ is odd, a boundary marked point, or boundary half-node, $x_j$ in a twisted $r$-spin structure with a grading is legal if and only if its twist is odd.
\item\label{it lift even compatible} When $r$ is even, the boundary twists $b_j$ in a compatible twisted $r$-spin structure must be even.

\item\label{it Ramond boundary node}
Ramond boundary nodes can appear in a graded structure only when $r$ is odd, and in this case, their half-nodes are illegal with twists $r-1.$
\item\label{it lift compatible parity}
There exists grading that alternates precisely at a subset $D \subset \{x_j\}_{j \in B}$ if and only if
\begin{equation}\label{eq boundarywise pairty}
    |D\cap \partial_i \Sigma| \equiv  \Theta_i \mod 2
\end{equation}

for all connected components $\partial_i \Sigma$ of $\partial \Sigma$, where $\Theta_i:=0$ if $|J|^{\tilde\phi}_{\partial_i\Sigma}$ is orientable and $\Theta_i:=1$ if $|J|^{\tilde\phi}_{\partial_i\Sigma}$ is not orientable.
In particular, we have
\begin{equation}\label{eq pairty all genus}
   \frac{2\sum a_i + \sum b_j-2g + 2}{r} \equiv |D| \mod 2.
\end{equation}

\end{enumerate}
\end{prop}
\begin{proof}
    The first four items are local properties. Their proof is exactly the same as in \cite[Proposition 2.5 and Observation 2.13]{BCT1}, where the $g=0$ version is proven. 

    For the fifth item, \eqref{eq boundarywise pairty} follows directly from the definition of gradings and legality, and \eqref{eq pairty all genus} is obtained by the summation of \eqref{eq boundarywise pairty} over all boundaries $\partial_i \Sigma$. Actually,  similar to the proof of the fourth item of \cite[Proposition 2.5]{BCT1},
    we consider a $\widetilde{\phi}$-invariant meromorphic section $s_m$ of $\lvert J \rvert$, then $\Theta_i$ equals (up to modulo $2$) the number of zeros minus the number of poles of $s_m$ lying on $\partial_i\Sigma$. On the other hand, the zeros and poles of $s_m$ not lying on any $\partial \Sigma_i$ appear in pairs, therefore the degree of $\lvert J\rvert$, which is the left-hand side of \eqref{eq pairty all genus}, equals (up to modulo $2$) the summation of all $\Theta_i$, which is the right-hand side of \eqref{eq pairty all genus} by \eqref{eq boundarywise pairty}.
\end{proof}

When a Ramond contracted boundary node is normalized, the grading at this boundary node induces an additional structure at the corresponding half-node (see \cite[Definition 2.8]{BCT1} for an equivalent definition). Note that for an internal marked point with twist $r-1$, there is a map
$$\tau': \left(|S|\otimes \mathcal O\left([z_i]\right)\right)^{\otimes r}\big|_{z_i}\to \omega_{|C|}([z_i])\big|_{z_i}\cong\mathbb C,$$
where the second identification is the residue map. 

\begin{definition}\label{def: normalized contracted boundary marking}
A \textit{normalized contracted boundary marked point} on an anchored nodal marked surface with a twisted $r$-spin structure is a Ramond internal marked point with twist $r-1$, together with
\begin{enumerate}
\item an involution $\widetilde{\phi}$ on the fibre $\left(|S|\otimes {\mathcal{O}}\left([z_i]\right)\right)_{z_i}$ such that
\[
\tau'(\widetilde{\phi}(v)^{\otimes r})=-\overline{\tau'(v^{\otimes r})}\quad \text{for all $v\in\left(|S|\otimes {\mathcal{O}}\left([z_i]\right)\right)_{z_i}$},
\]
where $w\mapsto\overline{w}$ is the standard conjugation, and
\[\left\{\tau'(v^{\otimes r})\; | \; v\in \left(|S|\otimes {\mathcal{O}}\left([z_i]\right)\right)^{\widetilde\phi}_{z_i}\right\}\supseteq i{\mathbb R}_+,\]where $i$ is the root of $-1$ in the upper half-plane;
\item a connected component $V$ of $\left(|S|\otimes {\mathcal{O}}\left([z_i]\right)\right)^{\widetilde\phi}_{z_i}\setminus\{0\}$, called the \textit{positive direction}, such that $\tau'(v^{\otimes r})\in i{\mathbb R}_+$ for any $v \in V$.
\end{enumerate}
\end{definition}

We can now define the primary objects of interest in this paper:

\begin{definition}
\label{def graded rspin surface}
A \textit{stable graded $r$-spin surface} (legal stable graded $r$-spin surface respectively) is a stable anchored nodal marked surface, together with
\begin{enumerate}
\item a compatible twisted $r$-spin structure $S$ in which all contracted boundary nodes are Ramond;
\item a choice of grading (legal gradings respectively);

\item a set $NCB\subseteq \{i\in I\colon \tw(z_i)=r-1\}$ and an additional structure of normalized contracted boundary marked point at each point in $\{z_i\}_{i\in NCB}$.

\end{enumerate}
For an integer $0\le\h\le \lfloor \frac{r-2}{2}\rfloor$, we say a stable graded $r$-spin surface is of level-$\h$ if every legal boundary marked point has twist greater than or equal to $r-2-2\h$, and every illegal boundary marked point has twist smaller than or equal to $2\h$. We will omit the term "level-$\h$" when $\h$ is chosen as a fixed integer.
\end{definition}

\begin{rmk}
    Note that in \cite{BCT2}, the term "stable graded $r$-spin disk" refers to a legal stable graded genus-zero level-$0$ $r$-spin surface.
\end{rmk}

\subsection{The moduli space of graded $r$-spin surfaces} 
Denote by $\Mbar_{g,n}^{1/r}$ the moduli space of stable $r$-spin surfaces without boundary. This space is known to be a smooth Deligne--Mumford stack with projective coarse moduli. The forgetful map to \[\text{For}_{\text{spin}}:\Mbar_{g,n}^{1/r}\to\Mbar_{g,n}\] is finite (see e.g. \cite{ChiodoStable}, or~\cite{Jarvis} which works in a slightly different compactification). $\Mbar_{g,n}^{1/r}$ space admits a decomposition into connected components,
\[
\Mbar_{g,n}^{1/r} = \bigsqcup_{(a_1, \ldots, a_n)} \Mbar_{g,\{a_1, \ldots, a_n\}}^{1/r},
\]
where $\Mbar_{g,\vec{a}}^{1/r}$ denotes the substack of $r$-spin structures with twist $a_i\in\{0,\ldots,r-1\}$ at the $i$-th marked point.  

We denote by $\Mbarstar_{g,k,l}^{1/r}$ the moduli space of connected stable graded genus-$g$ $r$-spin surfaces with $k$ boundary and $l$ internal marked points.
As in the closed case, we have a set-theoretic decomposition of the space
\[\Mbarstar_{g,k,l}^{1/r} = \bigsqcup_{\vec{a}} \Mbarstar_{g,\{b_1,\ldots,b_k\}, \{a_1,\ldots,a_k\}}^{1/r},\]
in which $\Mbarstar_{g,\{b_1,\ldots,b_k\}, \{a_1,\ldots,a_k\}}^{1/r} \subset \Mbarstar^{1/r}_{g,k,l}$ consists of graded surfaces for which the $i$-th boundary (internal) marked point has twist $b_i$ ($a_i$ resp.).
We denote by \begin{equation}\label{eq for_spin_g}\text{For}_{\text{spin}}:\Mbarstar_{g,k,l}^{1/r}\to\Mbar_{g,k,l}\end{equation} the map that forgets the grading and the spin structure.
\begin{thm}\label{thm:all_g_moduli}
$\Mbarstar_{g,k,l}^{1/r}$ is a compact orbifold with corners of real dimension $3g-3+k+2l$. It is associated with a universal curve with a universal $r$-spin line and a universal grading. When $g=0$ this moduli is orientable.
\end{thm}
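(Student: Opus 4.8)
The plan is to reduce the statement about $\Mbarstar_{g,k,l}^{1/r}$ to the analogous, already-understood structure on the moduli of closed $r$-spin curves $\Mbar_{g,n}^{1/r}$ together with a real structure coming from the involution $\phi$. First I would set up the doubling construction: a connected stable graded $r$-spin surface $(C,\phi,\Sigma,\dots,S,\widetilde\phi,\dots)$ is determined by its ``complex double'' $(C,S)$ — a stable $r$-spin curve with $n = 2l + k + (\text{contracted-boundary data})$ marked points — together with the anti-holomorphic involution $\phi$, its lift $\widetilde\phi$, and the grading. The assignment $(\text{graded surface}) \mapsto (C,S)$ realizes $\Mbarstar_{g,k,l}^{1/r}$, locally, as (a component of) the fixed locus of an anti-holomorphic involution on an open part of $\Mbar_{g,n}^{1/r}$ adapted to the boundary markings, with the extra discrete data of the grading and the $NCB$ structure accounting for the finitely many choices. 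Since $\Mbar_{g,n}^{1/r}$ is a smooth DM stack, finite over $\Mbar_{g,n}$, the fixed locus of the involution is a smooth orbifold (with corners appearing exactly where the boundary strata of the double — boundary nodes in the surface picture — show up), of real dimension equal to the complex dimension of $\Mbar_{g,n}$ restricted appropriately, which one computes to be $3g-3+k+2l$; the universal curve, universal spin line and universal grading are pulled back / descended from the corresponding universal objects on $\Mbar_{g,n}^{1/r}$ together with the universal involution. All of this is carried out in genus zero in \cite{BCT1}, and the only genuinely new point for general $g$ is bookkeeping of the node types in Figure \ref{fig node type} and the boundary/contracted-boundary degenerations; I would cite \cite{BCT1} for the local model at each stratum and indicate the changes.

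For the corners structure and compactness: the interior strata of $\Mbarstar_{g,k,l}^{1/r}$ are indexed by (decorated) stable graph data, and a codimension-one boundary face occurs precisely when the surface acquires one boundary node or one contracted boundary node, the local model being $[0,\varepsilon)\times(\text{smaller moduli})$ for the real gluing parameter, exactly as in \cite{BCT1}. Higher-codimension faces correspond to several such degenerations, giving the corner structure; compactness follows from compactness of $\Mbar_{g,n}$ (properness of $\text{For}_{\text{spin}}$) together with the fact that the real locus carved out is closed. Effectiveness of the orbifold structure is inherited from $\Mbar_{g,n}^{1/r}$ via $\text{For}_{\text{spin}}$ (the generic automorphism group of an $r$-spin structure with at least one nonzero twist, or enough markings, is the trivial extension of $\mathbb Z/r$, which acts nontrivially), so I would state it and reduce to the closed statement.

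The orientability claim in $g=0$ is where the real work lies, and I expect it to be the main obstacle; I would only sketch it here, since the detailed construction of a canonical orientation is the subject of Section \ref{sec:orientation}. The idea is that the tangent space to $\Mbarstar_{0,k,l}^{1/r}$ at a smooth point decomposes, via the doubling, into contributions from internal markings (each carrying a canonical complex orientation from the deformation of a point on $\Sigma$) and from the $\phi$-real deformations of the domain together with the positions of boundary markings on $\partial\Sigma$; the boundary-marking directions are oriented by the ambient orientation of $\partial\Sigma$ (itself fixed by the complex orientation of $\Sigma$), and the real deformations of the double are oriented using the grading — concretely, a nowhere-vanishing section of $|J|^{\widetilde\phi}$ over each boundary circle, whose existence and choice of component is exactly the grading data, trivializes the relevant real line bundles. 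One must check that this local recipe is consistent across the codimension-one boundary faces (i.e. that it extends over all degenerations into boundary and contracted-boundary nodes), which is precisely the compatibility verified for the Witten bundle orientation in Section \ref{sec:orientation} and can be quoted here in the special case of the trivial bundle. The hard part is this gluing-compatibility check at the boundary strata; everything else is a translation of the closed-curve theory through the doubling.
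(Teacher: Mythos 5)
Your proposal has the right overall shape---realizing $\Mbarstar_{g,k,l}^{1/r}$ as real data sitting over the closed moduli $\Mbar_{g,k+2l}^{1/r}$ via doubling, with extra discrete choices---but it misses the key technical step that the paper spends most of its proof on. Taking the fixed locus of the anti-holomorphic involution does \emph{not} automatically produce an orbifold with corners. Near a boundary node, the versal deformation of the double has a gluing parameter $t$; the real structure forces $t\in\mathbb R$, and the fixed locus of the moduli space there is a folded real space (locally like the real locus of $\{xy=t\}$), not a manifold with corners. The paper obtains the corner structure by performing Zernik's ``real hyperplane blow-up'' \cite{Zernik} along the real simple normal crossing divisor of real-nodal curves (Step (B) in the paper's proof), and only afterwards passes to the disconnected $2$-to-$1$ cover selecting the distinguished half $\Sigma$ (Step (C)), the components with the internal markings in $\Sigma$ and compatible spin structure (Step (D)), and finally the cover by gradings (Step (E)). Your sentence ``the fixed locus of the involution is a smooth orbifold (with corners appearing exactly where the boundary strata of the double show up)'' skips precisely the blow-up/cutting step that makes this true, and also omits the distinguished-half cover. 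This is a genuine gap, not bookkeeping: without the real blow-up there is no reason the space you have is an orbifold with corners at all.

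On the $g=0$ orientability, your proposal is a genuinely different and much harder route than the paper's. The paper simply notes that $\text{For}_{\text{spin}}$ restricted to any nonempty component of $\Mbarstar_{0,k,l}^{1/r}$ is a bijection onto $\Mbar_{0,k,l}$, a diffeomorphism off a finite union of codimension-$2$ suborbifolds, and that $\Mbar_{0,k,l}$ is already orientable by \cite[Proposition 3.12]{BCT1}; one then pulls back the orientation, using that codimension-$\geq 2$ strata don't obstruct extension. Your sketch (decomposing the tangent space, orienting internal-marking directions complexly, orienting boundary-marking directions via $\partial\Sigma$, using the grading to trivialize real lines, checking gluing compatibility along faces) is essentially a direct construction of the orientation, closer in spirit to what the paper does in Section~\ref{sec:orientation} for the Witten bundle. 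It is not wrong in outline, but it is much more work than needed, and you have not actually done the compatibility check across boundary faces that you correctly identify as the crux; the paper's pullback argument sidesteps that entirely.
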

In \cite[Theorem 3.4]{BCT1}, this theorem is proven for $g=0,$ and a special choice of boundary twists, based on the results of \cite[Section 2]{Zernik}. For the purposes of this paper we need the more general case, but the proof is almost identical, and is again based on \cite[Section 2]{Zernik}.
For this reason, we will allow ourselves to be quite brief, and the reader should consult for more details. We will highlight the main points which differ from \cite[Theorem 3.4]{BCT1}. Our notion of orbifold with corners follows that of \cite[Section 3]{Zernik}.
\begin{proof}
We have the following sequence of maps, whose content is explained below.
\begin{equation}
\Mbarstar_{g,k,l}^{1/r}\stackrel{(E)}{\to}\widehat{\mathcal{M}}_{g,k,l}^{1/r}\stackrel{(D)}{\hookrightarrow}\widetilde{\mathcal{M}}_{g,k,l}^{1/r}\stackrel{(C)}{\to}\widetilde{\mathcal{M}}_{g,k,l}^{1/r,{\mathbb Z}_2}\stackrel{(B)}{\to}
\overline{\mathcal{M}}_{g,k+2l}^{1/r,{\mathbb Z}_2}\stackrel{(A)}{\to}\Mbar_{g,k+2l}^{'1/r}.
\end{equation}
\textbf{Step (A): }
$\overline{\mathcal{M}}_{g,k+2l}^{'1/r}$ is the suborbifold of $\Mbar_{g,k+2l}^{1/r}$ defined by the conditions
\begin{itemize}
    \item for $i\in\{k+1,\ldots,k+l\}$ the $i$-th marking's twist, and the $(i+l)$-th marking twist are equal to the same number $a_i;$
    \item  for even $r$, the twist of the $j$-th marking $b_j$ for $j\in\{1,\ldots, k\}$ satisfies
    \[b_j\equiv 0~\text{mod~2};\]
\end{itemize} 
Consider the involution on this space, defined by
\[(C;w_1, \ldots, w_{k+2l}, S) \mapsto (\overline{C}; w_1, \ldots, w_k, w_{k+l+1},\ldots, w_{k+2l},w_{k+1},\ldots,w_{k+l}, \overline{S}),\]
where $\overline{C}$ and $\overline{S}$ are the same as $C$ and $S$ but with the conjugate complex structure (more details on the fixed point functor on stacks can be found in \cite{fixedPointStack}). Then $\Mbar_{g,k+2l}^{1/r, {\mathbb Z}_2}$ denotes its fixed locus. As the fixed locus of an anti-holomorphic involution, $\Mbar_{g,k+2l}^{1/r, {\mathbb Z}_2}$ is a real orbifold.  It is the classifying space of isomorphism types of \emph{real} marked $r$-spin curves (curves with an involution $\widetilde\phi$ which covers the conjugation $\phi$ on the underlying real curve) and the prescribed twists, and it maps to $\Mbar_{g,k+2l}^{1/r}$. In particular, it inherits a universal curve via pullback.

\textbf{Step (B): }
In this step, we cut $\overline{\mathcal{M}}_{g,k+2l}^{1/r,{\mathbb Z}_2}$ along the real simple normal crossing divisors consisting of curves with at least one real node, via Zernik's "real hyperplane blow-up" \cite{Zernik}. The result, as argued in \cite[Section 3.3]{Zernik}, is an orbifold with corners $\widetilde{\mathcal{M}}_{g,k,l}^{1/r, {\mathbb Z}_2}$.  

\textbf{Step (C): }
From here, we define $\widetilde{\mathcal{M}}_{g,k,l}^{1/r}$ to be the disconnected $2$-to-$1$ cover of $\widetilde{\mathcal{M}}_{g,k,l}^{1/r,{\mathbb Z}_2}$. The generic point of the moduli space $\widetilde{\mathcal{M}}_{g,k,l}^{1/r}$ corresponds to a smooth marked real spin curve with a choice of a distinguished "half" $\Sigma$, that is a connected component of $C\setminus C^\phi$.  In the generic (smooth) situation, this also induces an orientation on $C^{\phi}$. It is important to note that this choice can be uniquely continuously extended to nodal points, see \cite[Section 2.6]{Zernik}, as opposed to being independently chosen for each boundary component.
The proof that $\widetilde{\mathcal{M}}_{g,k,l}^{1/r}$ is an orbifold with corners is identical to the proof of \cite[Thoerem 2]{Zernik}.

\textbf{Step (D): }
Inside $\widetilde{\mathcal{M}}_{g,k,l}^{1/r}$, we denote by $\widehat{\mathcal{M}}_{g,k,l}^{1/r}$ the union of connected components such that
\begin{itemize}
    \item the marked points $w_{k+1},\ldots, w_{k+l}$ lie in the distinguished stable half $\Sigma$;
    \item for even $r,$ the spin structure is compatible;
\end{itemize} 
$\widehat{\mathcal{M}}_{g,k,l}^{1/r}$ is clearly also an orbifold with corners, as a union of connected components of an orbifold with corners.

\textbf{Step (E): }
Finally, $\Mbarstar_{g,k,l}^{1/r}$ is the cover of $\widehat{\mathcal{M}}_{g,k,l}^{1/r}$ given by a choice of grading.

As a set $\Mbarstar_{g,k,l}^{1/r}$ is the space we defined above set-theoretically. It inherits the orbifold-with-corners structure from $\widehat{\mathcal{M}}_{g,k,l}^{1/r}$. It is compact since compactness is preserved at every step. It carries a universal curve and $r$-spin line since any intermediate space in the above steps carries such objects, and it has a universal grading by the construction in the last step.

In order to prove orientability in $g=0$, observe first that the moduli space $\Mbar_{0,k,l}$ of (spinless) stable marked disks is orientable \cite[Proposition 3.12]{BCT1}.
Also, by \cite[Proposition 2.15, Observation 3.9]{BCT1}, for $g=0,$ the map of \eqref{eq for_spin_g} is a bijection from any nonempty component $C$ of $\Mbarstar_{0,k,l}^{1/r}$ to $\Mbar_{0,k,l},$ which is, moreover, a diffeomorphism from $\text{Int}(C)\setminus S$ onto its image, where $S\subset C$ is a finite union of codimension-$2$ suborbifolds. We use this to pullback the orientation to $\text{Int}(C)\setminus S$. Since adding codimension-$2$ strata or boundaries does not ruin the orientability of $C,$ and hence also $\Mbarstar_{0,k,l}^{1/r}$, are orientable.
\end{proof}

The superscript $*$ indicates that there might be illegal boundary marked points. 
We denote by $\Mbar_{g,k,l}^{1/r}\subseteq \Mbarstar_{g,k,l}^{1/r}$ the submoduli space parametrizing legal graded genus-$g$ $r$-spin surfaces with $k$ boundary markings and $l$ internal markings. 

The moduli spaces of smooth stable graded genus-$g$ $r$-spin surfaces are denoted by the notation without the bar on the top, \textit{e.g.} we denote by ${\mathcal M^*}_{g,k,l}^{1/r} \subset {\Mbarstar}_{g,k,l}^{1/r}$ the moduli space of smooth stable graded genus-$g$ $r$-spin surface with $k$ boundary and $l$ internal marked points.

Assuming that the internal marked points $\{z_i\}_{i\in I}$ have twists $\{a_i\}_{i\in I}$, by an abuse of notation, we also denote the set $\{a_i\}_{i\in I}$ by $I$.
Similarly, we denote by $B$ the set $\{b_j\}_{j\in B}$ equipped with a preselected legality for each of its elements. We denote by ${\Mbarstar}_{g,{B},I}^{1/r}\subseteq \Mbarstar_{g,\lvert B\rvert,\lvert I\rvert}^{1/r}$ the components parametrizing the stable graded genus-$g$ $r$-spin surfaces whose marked points are indexed by $B$ and $I$. The superscript will be $*$ omitted as the legality is contained in the data of $B$, unless we want to emphasize the existences of illegal boundary markings. 

Given a stable graded genus-$g$ $r$-spin surface, the decomposition of $\partial \Sigma$ into connected components induces a decomposition of $B$ into several (possibly empty) subsets; moreover, the canonical orientation on its boundary $\partial\Sigma$ induces a bijective map $\sigma_2\colon B\to B$, such that every nonempty subset of $B$ in the decomposition is an orbit of $\sigma_2$. We denote by $\bar{B}$ the set $B$ equipped with such a decomposition and a bijective map. We denote by ${\Mbarstar}_{g,\bar{B},I}^{1/r}\subseteq{\Mbarstar}_{g,{B},I}^{1/r}$ the components that parametrize the stable graded genus-$g$ $r$-spin surfaces inducing the same extra data as $\bar{B}$ on $B$. 
In the case $g=0$, $B$ is trivially decomposed into only one subset, which is $B$ itself, and $\sigma_2$ induces a cyclic order on $B$; we write $\bar{B}=\overline{\{b_1,b_2,\dots,b_{\lvert B \rvert}\}}$ to make the cyclic order manifest, where $\sigma_2(b_i)=b_{i+1}$ for $1\le i \le \lvert B \rvert$ and $\sigma_2(b_{\lvert B \rvert})=b_{1}$.
In the case $g=1$, we write $\bar{B}=\{\bar{B}^1,\bar{B}^2 \}$ if $\bar{B}^1$ and $\bar{B}^2$ are the two (not necessarily non-empty) orbits of $\sigma_2$.

\begin{rmk}
    We denote by $\widehat{\mathcal{M}}_{g,\bar{B},I}^{1/r}\subseteq \widehat{\mathcal{M}}_{g,\lvert B \rvert,\lvert I \rvert}^{1/r}$ the image of ${\Mbarstar}_{g,\bar{B},I}^{1/r}\subseteq {\Mbarstar}_{g,\lvert B \rvert,\lvert I \rvert}^{1/r}$ under the covering map in Step (E) of the proof of Theorem \ref{thm:all_g_moduli}. We denote by $\text{For}_{gr}\colon {\Mbarstar}_{g,\bar{B},I}^{1/r}\to \widehat{\mathcal{M}}_{g,\bar{B},I}^{1/r}$ the restricted covering map, which can be regarded as the map forgetting the grading.
    
    We denote by $h$ the number of subsets in the decomposition of $B$ (which is a part of data in $\bar{B}$); note that $h$ is the number of boundary components for a generic $\Sigma$ parametrized by ${\Mbarstar}_{g,\bar{B},I}^{1/r}$.  The map $\text{For}_{gr}$ is an isomorphism when $r$ is odd; while it is $2^h$-to-$1$ when $r$ is even (we recall that $B$ and hence $\hat B$ are quipped with a pre-selected legality for each of their elements). 
    We should stress that, since for even $r$ the generic point of $\widehat{\mathcal{M}}_{g,\bar{B},I}^{1/r}$ has a ${\mathbb{Z}}/2{\mathbb{Z}}$ isotropy, the number of nonisomorphic choices of gradings is $2^{h-1},$ as any two choices which differ by globally multiplying the gradings by $-1$ are isomorphic.

\end{rmk}

\subsection{Graded $r$-spin graphs}\label{subsec rspin graphs}
Analogously to the more familiar setting of the moduli space of curves, also the moduli space of graded $r$-spin surfaces can be stratified according to decorated dual graphs. 
\begin{definition}\label{def graph without spin}
A \textit{pre-stable dual graph} is a tuple
\[\Gamma = (V, H, \sigma_0, \sigma_1, H^{CB},\Hat{g}, n, \sigma_2,  m),\]
in which
\begin{enumerate}[label=(\emph{\roman*})]
\item $V$ is a finite set (the \textit{vertices}) which can be decomposed as $V = V^O \sqcup V^C$, where $V^O$ and $V^C$ are the sets of \textit{open} and \textit{closed vertices};
\item $H$ is a finite set (the \textit{half-edges}) which can be decomposed as $H = H^B \sqcup H^I$, where $H^B$ and $H^I$ are the sets of \textit{boundary} and \textit{internal half-edges};
\item $\sigma_0: H \to V$ is a function mapping each half-edge to the vertex from which it emanates;
\item $\sigma_1: H \to H$ is bijection which can be decomposed into $\sigma_1^B: H^B \to H^B$ and $\sigma_1^I: H^I \to H^I$. The size of each orbit of $\sigma_1$ is required to be at most $2$. We denote by $T^B \subseteq H^B$ and $T^I \subseteq H^I$ the sets of size-$1$ orbits of $\sigma_1$;

\item $H^{CB} \subseteq T^I$ is the set of \textit{contracted boundary tails};
\item $\hat{g}\colon V\to \mathbb Z^{\ge0}$ is a function, the \emph{small genus};
\item $n\colon V\to \mathbb Z^{\ge 0},$ the \emph{number of boundaries}, is an assignment which satisfies that 
$v\in V^C$ if and only if $n(v)=0$; for each $v\in V$ there is a decomposition (into possibly empty subsets)
$$
\sigma_0^{-1}(v)\cap H^B=\bigsqcup_{i\in BD(v)}H^B_i(v)
$$
for a set $BD(v)$ with $n(v)$ elements;
\item $\sigma_2\colon H^B\to H^B$ is a function (\textit{the cyclic order map}) which induces cyclic orders on $H_i^B(v)$ for each $v\in V^O$ and $i\in BD(v)$; 

\item $m$ is a function (the \textit{marking}) given by
\[m = m^B \sqcup m^I: T^B \sqcup (T^I \setminus H^{CB}) \to \mathbb Z_+,\]
where $m^B$  (the \emph{boundary marking}) induces a bijection between $T^B$ and $\{1,2,\dots,\lvert T^B\rvert\}$, and $m^I$ (the \emph{internal marking}) induces a bijection between $T^I \setminus H^{CB}$ and $\{1,2,\dots,\lvert T^I \setminus H^{CB}\rvert\}$.
\end{enumerate}

\end{definition}

We refer to elements of $T^B$ as \textit{boundary tails} and elements of $T^I \setminus H^{CB}$ as \textit{internal tails}, and we set
$T := T^I \sqcup T^B.$
We denote by $E^B$ and $E^I$ the sets of size-$2$ orbits of $\sigma_1^B$ and $\sigma_1^I$, and we refer to them as \textit{boundary edges} and \textit{internal edges}.

We say a boundary edge $e\in E^B$ is \textit{separating} if the connected component containing $e$ separates into two after removing $e$; we denote by $E^B_{sp}$ the set of all separating boundary edges, and refer to all the edges in $E^B_{nsp}:=E^B\setminus E^B_{sp}$ as \textit{non-separating boundary edges}. 
We say an internal edge $e\in E^B$ is \textit{separating} if the connected component containing $e$ separates into two after removing $e$, and at least one of the two new components contains neither open vertices nor contracted boundary tails; we denote by $E^I_{sp}$ the set of all separating internal edges, and refer to all the edges in $E^I_{nsp}:=E^I\setminus E^I_{sp}$ as \textit{non-separating internal edges}.

For each vertex $v$, we set
$k(v) := |(\sigma_0^B)^{-1}(v)|$ and $l(v):= |(\sigma_0^I)^{-1}(v)|.$
For an open vertex $v \in V^O$, we set the genus of $v$ to be 
$$
g(v):=2\hat g (v)+n(v)-1;
$$
we say an open vertex $v \in V^O$ is \textit{stable} if $k(v) + 2l(v) > 2-2g(v)$.
For a closed vertex $v \in V^C$, we set the genus of $v$ to be 
$$
g(v):=\hat g (v);
$$
we say a closed vertex $v \in V^C$ is \textit{stable} if $l(v) > 2-2g(v)$. 
A graph is \textit{stable} 
if all of its vertices are stable, 
and it is \textit{closed} if $V^O = \emptyset.$ A graph is \emph{smooth} if there are no edges or contracted boundary tails.
The genus of a connected graph $\Gamma$ is defined as 
\begin{equation}  \label{eq genus rspin graph}
g(\Gamma):=\sum_{v\in V^O}g(v)+2\sum_{v\in V^C}g(v)+2\lvert E^I \rvert+\lvert E^B \rvert+\lvert H^{CB} \rvert-2\lvert V^C \rvert-\lvert V^O \rvert+1.
\end{equation}

\begin{definition}
An \textit{isomorphism} between two pre-stable dual graphs
\[\Gamma = (V,H, \sigma_0,\sigma_1, H^{CB},\hat{g},n,\sigma_2,  m) \; \text{ and } \; \Gamma' = (V', H', \sigma_0',\sigma_1', H'^{CB},\hat{g}',n',\sigma_2',  m')\]
is a pair $f = (f^V, f^H)$, where
$f^V: V \to V'$ and $f^H: H \to H'$
are bijections satisfying
\begin{multicols}{2}
\begin{itemize}
\item $f^H(H^{O})=H'^{O},f^H(H^{C})=H'^{C}$;
\item $f^V(V^{O})=V'^{O},f^V(V^{C})=V'^{C}$;
\item $f^{H} \circ \sigma_1 =  \sigma'_1 \circ f^H$,
\item $f^{V} \circ \sigma_0 =  \sigma'_0 \circ f^H$,
\item $f^H\circ \sigma_2=\sigma_2'\circ f^H\vert_{H^B}$;
\item $\hat g=\hat g'\circ f^V$,
\item $n=n'\circ f^V$,
\item $m = m' \circ f^H$,
\item $f^H(H^{CB}) = H'^{CB}$.
\end{itemize}
\end{multicols}
We denote by $\text{Aut}(\Gamma)$ the group of automorphisms of $\Gamma$.
\end{definition}

\begin{figure}[h]
\centering

  \begin{subfigure}{.3\textwidth}
  \centering

\begin{tikzpicture}[scale=0.3, every text node part/.style={align=center},]
   \node[rectangle, draw, very thick, minimum size=1]    (mainopen)    {$\hat g=0$\\$n=2$ };
  \node[circle, draw, very thick, minimum size=1, above=0.4cm of mainopen]    (closed)    {$\hat g=0$};
  \draw [double] (mainopen) to (closed);
   
\end{tikzpicture}

  \caption{}
\end{subfigure}
  \begin{subfigure}{.3\textwidth}
  \centering

\begin{tikzpicture}[scale=0.3, every text node part/.style={align=center},]

   \node[rectangle, draw, very thick, minimum size=1]    (mainopen)    {$\hat g=0$\\$n=1$ };
  \node[rectangle, draw, very thick, minimum size=1, left=1cm of mainopen]    (subopen)    {$\hat g=0$\\$n=1$ };
  \draw [double] (mainopen) to (subopen);
\end{tikzpicture}
\vspace{0.9cm}
  \caption{}
\end{subfigure}

\begin{subfigure}{.3\textwidth}
  \centering
\begin{tikzpicture}[scale=0.3, every text node part/.style={align=center},]
  \node[rectangle, draw, very thick, minimum size=1]    (mainopen)    {$\hat g=0$\\$n=2$ };
  \node[rectangle, draw, very thick, minimum size=1, left=1cm of mainopen]    (subopen)    {$\hat g=0$\\$n=1$ };
  \draw  (mainopen) to (subopen);
  \vspace{0cm}
\end{tikzpicture}
\vspace{0.9cm}

  \caption{}
\end{subfigure}
\begin{subfigure}{.3\textwidth}
  \centering
 \vspace{0.3cm}
\begin{tikzpicture}[scale=0.3, every text node part/.style={align=center},]
\vspace{0.15cm}

\node[rectangle, draw, very thick, minimum size=1]    (mainopen)    {$\hat g=0$\\$n=1$ };

 \draw  (mainopen) to [in=-50,out=-130, loop, distance=5cm] (mainopen);
  
\end{tikzpicture}
\vspace{-0.2cm}

  \caption{}
\end{subfigure}
\begin{subfigure}{.3\textwidth}
  \centering

\begin{tikzpicture}[scale=0.3, every text node part/.style={align=center},]
\vspace{0.15cm}
  \node[rectangle, draw, very thick, minimum size=1]    (mainopen)    {$\hat g=0$\\$n=1$ };
  \node[below = 0.5cm of mainopen,circle,fill,inner sep=1pt]  (pt) {};
  \draw  (mainopen) to (pt);
\end{tikzpicture}
\vspace{0.3cm}

  \caption{}
\end{subfigure}
\caption{Dual graphs corresponding to the surfaces in Figure \ref{fig node type}. We represent by thick rectangles the open vertices and by thick circles the open vertices. The boundary (half-)edges are represented by ordinary lines, the  internal (half-)edges are represented by double lines, and the contracted boundary tails are represented by  segments with black endpoints.}
\label{fig dual graph}
\end{figure}
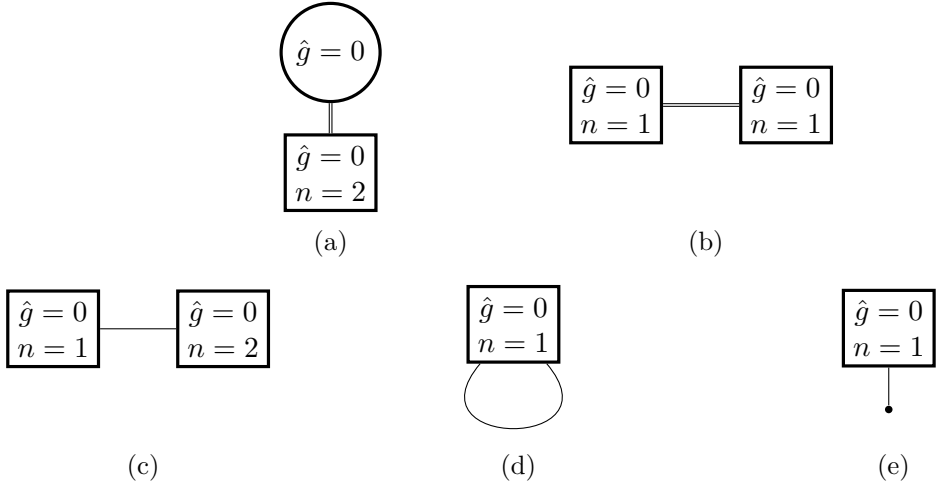

Pre-stable dual graphs encode the discrete data of a marked orbifold Riemann surface with boundary.  In order to encode the additional data of a twisted spin structure and a lifting, we must add further decorations.

\begin{definition}
\label{def rspin dual graph}
A \textit{stable graded $r$-spin graph} is a stable dual graph $\Gamma$ as above, together with maps
\[\twE: H \to \{-1, 0, 1, \ldots, r-1\}\]
(the \textit{twist}) and
\[\alt: H^B \to {\mathbb{Z}}/2{\mathbb{Z}}\]
and two disjoint subsets $T^{anc}\subseteq T^I\setminus H^{CB}$ (the \textit{anchors}) and $T^{NCB} \subseteq T^I\setminus H^{CB}$ (the \textit{normalized contracted boundary tails})
satisfying the following conditions:
\begin{enumerate}[label=(\emph{\roman*})]

\item
 \label{item} For any open vertex $v \in V^O$,
\[2\sum_{h \in (\sigma_0^I)^{-1}(v)} \twE(h) + \sum_{h \in (\sigma_0^B)^{-1}(v)} \twE(h) \equiv 2g(v)-2 \mod r\]
and
\begin{equation}\label{eq legal mod 2}
   \frac{2\sum_{h \in  (\sigma_0^I)^{-1}(v)}\ \twE(h) + \sum_{h \in (\sigma_0^B)^{-1}(v)} \twE(h)-2g(v) + 2}{r} \equiv \sum_{h \in (\sigma_0^B)^{-1}(v)} \alt(h) \mod 2.
\end{equation}

\item
For any closed vertex $v \in V^C$,
\[\sum_{h \in \sigma_0^{-1}(v)} \twE(h) \equiv 2g(v)-2 \mod r.\]

\item \label{item:condition on anchor}
    There is exactly one tail in $T^{anc}$ on each connected component without open vertices and contracted boundary edges, while there are no tails in $T^{anc}$ on other connected components. All tails $t$ with $\twE(t) =-1$ belong to $T^{anc}$.

\item
\label{cond2} For every half-edge $h \in H \setminus T$, we have
\[\twE(h) + \twE(\sigma_1(h)) \equiv r -2 \mod r.\]
No boundary half-edge $h$ satisfies $\twE(h)=-1$. 

In case $h \in H^I \setminus T^I$ satisfies $\twE(h) \equiv -1\mod~r,$ then $\twE(h)=-1$ precisely if the edge $e$ corresponding to $h$ is a separating internal edge and, after detaching $e$, the half-edge $h$ belongs to a connected component containing neither open vertices nor contracted boundary tails, nor tails in $T^{anc}$.

\item
\label{cond1} For each contracted boundary tail $t \in H^{CB}$ or normalized contracted boundary tail $t \in T^{NCB}$, we have $\twE(t) =r-1$.

\item All anchors $t\in T^{anc}$ with $\tw(t)=r-1$ belong to $T^{NCB}$.

\item
\label{it:-1}For each boundary half-edge $h \in H^B \setminus T^B$, if $\twE(h) \neq r-1 $ we have
\[\alt(h) + \alt(\sigma_1(h)) = 1\]
and if $\twE(h) =r -1$ then $\alt(h) = \alt(\sigma_1(h)) = 0.$

\item
If $r$ is odd, then for any $h \in H^B$,
\[\alt(h) \equiv \twE(h) \mod 2,\]
and if $r$ is even, then for any $h \in H^B$,
\[\twE(h) \equiv 0 \mod 2.\]

\end{enumerate}
Boundary half-edges $h$ with $\alt(h) = 1$ are called \textit{legal}, and those with $\alt(h) = 0$ are called \textit{illegal}.  Half-edges $h$ with $\twE(h) \in \{-1, r-1\}$ are called \textit{Ramond}, and those with $\twE(h) \in \{0, \ldots, r-2\}$ are called \textit{Neveu--Schwarz}.  An edge is called \textit{Ramond} if one (hence both) of its half-edges is Ramond and is called \textit{Neveu--Schwarz (NS)} otherwise.
\end{definition}

An \textit{isomorphism} between  stable  graded $r$-spin graphs consists of an isomorphism in the sense of Definition~\ref{def rspin dual graph} that respects $\tw$, $\alt$, $T^{anc}$ and $T^{NCB}$.  
We say a stable graded $r$-spin graph is \textit{legal} if every boundary tail is legal. We say a stable graded $r$-spin graph is \textit{level-$\h$} if every legal boundary tail has twist greater than or equal to $r-2-2\h$, and every illegal boundary tail has twist smaller than or equal to $2\h$.

Any connected genus-$g$ stable (legal, level-$\h$ resp.) graded $r$-spin surface $\Sigma$ induces a connected genus-$g$ (legal, level-$\h$) graded $r$-spin graph $\Gamma(\Sigma)$. If $\Gamma$ is a connected graph, we denote by ${\Mstar}^{1/r}_{\Gamma}\subseteq \Mbarstar_{g(\Gamma), \lvert T^B\rvert,\lvert T^I\setminus H^{CB}\rvert,}^{1/r}$ the moduli space parametrizing $r$-spin surface with dual graph $\Gamma$, and by $\Mbarstar^{1/r}_{\Gamma}$ its closure.   
If $\Gamma$ is disconnected, we define $\Mbarstar^{1/r}_{\Gamma}$ as the product of the moduli spaces associated to its connected components. The superscripts $1/r$ superscript $*$ will be omitted (unless emphasis is needed) as $r$ and the legality of boundary markings are parts of data in graded $r$-spin graph $\Gamma$.
\subsubsection*{Smoothing}
We can obtain a new stable graded $r$-spin dual graph by smoothing an edge or a contracted boundary tail. 
\begin{dfn}\label{def smoothing graph}
Let $\Gamma$ be a stable graded $r$-spin graph and $e$ an edge connecting vertices $v_1$ and $v_2$.  

If $v_1=v_2$, the \textit{smoothing} of $\Gamma$ along $e$ is the graph $d_e\Gamma$ obtained by erasing $e$, setting the small genus $\hat g_{d_e\Gamma}(v_1)=\hat g_{\Gamma}(v_1)+1$, and setting the number of boundaries $n_{d_e\Gamma}(v_1)=n_{\Gamma}(v_1)-1$ if $e\in E^B$, and   $n_{d_e\Gamma}(v_1)=n_{\Gamma}(v_1)$ if $e\in E^I$. The vertex $v_1$ is declared to be open in $d_e\Gamma$ if and only if it is open in $\Gamma$.

If $v_1\ne v_2$, the \textit{smoothing} of $\Gamma$ along $e$ is the graph $d_e\Gamma$ obtained by contracting $e$ and replacing $v_1$ and $v_2$ with a single vertex $v_{12}$. We set the small genus of $v_{12}$ to be $\hat g(v_{12})=\hat g(v_{1})+\hat g(v_{2})$; we set the number of boundaries  $n(v_{12})=n(v_1)+n(v_2)-1$ if $e\in E^B$, and  $n(v_{12})=n(v_1)+n(v_2)$ if $e\in E^I$.
The vertex $v_{12}$ is declared to be closed if and only if both $v_1$ and $v_2$ are closed.

The \textit{smoothing} of $\Gamma$ along $h \in H^{CB}$ is the graph $d_h\Gamma$ obtained by erasing $h$. Let $v$ be the vertex from which $h$ emanates, we set the small genus and the number of boundaries of $v$ in $d_h\Gamma$ to be $\hat g_{d_h\Gamma}(v)=\hat g_{\Gamma}(v)$ and $n_{d_h\Gamma}(v)=n_{\Gamma}(v)+1$.
The vertex $v$ is always declared to be open in $d_h\Gamma$. 

If $\Lambda$ is a smoothing of $\Gamma$, then each (half-)edge $h$ of $\Lambda$ corresponds to a unique (half-)edge of $\Gamma$, we also call it $h$ by an abuse of notation. Let $\sigma_2\colon H^B(\Gamma)\to H^B(\Gamma)$ be the cyclic order map on $\Gamma$, then the cyclic order map $\sigma'_2\colon H^B(\Lambda)\to H^B(\Lambda)$ on $\Lambda$ is given by
\[
\sigma'_2(h):=
\begin{cases}
\sigma_2(h),&\text{if $\sigma_2(h)\in H^B(\Lambda)$};\\
\sigma_2\circ\sigma_1\circ\sigma_2(h),&\text{if  $\sigma_2(h)\notin H^B(\Lambda)$ and $\sigma_2\circ\sigma_1\circ\sigma_2(h)\ne \sigma_1\circ\sigma_2(h)$};\\
\sigma_2\circ\sigma_2(h),&\text{if  $\sigma_2(h)\notin H^B(\Lambda)$ and  $\sigma_2\circ\sigma_1\circ\sigma_2(h)= \sigma_1\circ\sigma_2(h)$}.
\end{cases}
\]
The remaining graph data is kept unchanged after smoothing.

\end{dfn}
Note that the smoothing of a legal (level-$\h$ respectively) stable  graded $r$-spin graph is still legal (level-$\h$ respectively). Moreover, the smoothing of several edges (or contracted boundary tails) is independent of the order of smoothings.

\subsubsection*{Detaching}
We can also obtain a new stable graded $r$-spin graph by detaching an edge or a contracted boundary tail.

Let $\Gamma$ be a stable  graded $r$-spin graph and $e$ be an edge of $\Gamma$, we can detach $\Gamma$ along $e$ in the sense of ordinary graph and keep the extra data invariant. The object we get in this way might not be a stable graded $r$-spin graph since item \ref{item:condition on anchor} in Definition \ref{def rspin dual graph} might not be satisfied. To fix this, in the case where $e$ is a separating internal node, if one half-node $h$ of $e$ lies on a connected component without open vertices, contracted boundary edges and tails in $T^{anc}$, we add $h$ to $T^{anc}$. In this way we obtain a stable graded $r$-spin graph $\operatorname{Detach}_e \Gamma$. We should comment that this solution is good enough for our needs (which are the topological recursion relations in the sequel \cite{TZ2}, which involve the closed extended $r$-spin theory \cite{BCT_Closed_Extended}), but perhaps there is another solution for higher genus which is more geometrically motivated.

 If $h$ is a contracted boundary tail of $\Gamma$, we define $\operatorname{Detach}_h\Gamma$ to be the graph obtained by moving $h$ from $H^{CB}$ into $T^{NCB}$. In the case where $h$ is the unique contracted boundary tail lying on a closed vertex, we add $h$ to $T^{anc}$.

 Note that unless $\Gamma$ is genus-zero, the detaching of $\Gamma$ along two different edges (or contracted boundary tails) in different orders might not be the same. This is because a non-separating internal edge might become separating after detaching.

\subsection{The Witten bundle and the relative cotangent line bundles}
\label{subsec:Wittenbundle}
We now consider the important bundles on the moduli spaces of stable graded $r$-spin disks.

\subsubsection{Genus-zero Witten bundle}

We first concentrate on $g=0,$ the disk case.
To define the $r$-spin theory, in the sequel \cite{TZ2} to this work, we need to consider the \textit{Witten bundle} on the moduli space. Roughly speaking, let $\pi: \mathcal{C} \to \Mbarstar_{0,k,l}^{1/r}$ be the universal curve and $\mathcal{S} \to \mathcal{C}$ be the twisted universal spin bundle with the universal Serre-dual bundle 
\begin{equation}\label{eq universal companion bundle}
\mathcal{J}:= \mathcal{S}^{\vee} \otimes \omega_{\pi},
\end{equation}
then we~define a sheaf
\begin{equation}
\label{eq Wittenbundledef}
{\mathcal{W}}:= (R^0\pi_*\mathcal{J})_+,
\end{equation}
where the subscript $+$ denote invariant sections under the universal involution $\widetilde{\phi}: \mathcal{J} \to \mathcal{J}$.  To be more precise, defining~${\mathcal{W}}$ by \eqref{eq Wittenbundledef} would require dealing with derived pushforward in the category of orbifold-with-corners. To avoid this technicality, we define ${\mathcal{W}}$ by pullback of the analogous sheaf from a subset of the closed moduli space $\Mbar_{0,k+2l}^{1/r}$; see \cite[Section 4.1]{BCT1}. 
$\mathcal W$ is actually a vector bundle, this follows from a direct Riemann--Roch computation showing that 
\begin{equation}\label{eq dimension not jump}
    R^0\pi_*\mathcal{S}=0.
\end{equation}

 On a component of the moduli space with internal twists $\{a_i\}$ and boundary twists $\{b_j\}$, the (real) rank of the Witten bundle is
\begin{equation}\label{eq rank of witten bundle}\frac{2 \sum_{i\in I} a_i + \sum_{j\in B} b_j - (r-2)}{r}.\end{equation}

\subsubsection{Dimension-jump loci and genus-one Witten bundle}
In the $g=1$ case, we still have  the universal curve $\pi: \mathcal{C} \to \Mbarstar_{1,k,l}^{1/r}$ and the twisted universal spin bundle $\mathcal{S} \to \mathcal{C}$   with the universal Serre-dual bundle 
$\mathcal{J}:= \mathcal{S}^{\vee} \otimes \omega_{\pi}$
as in $g=0$ case. However,
\eqref{eq dimension not jump} is not true in this case. What we do is to remove the the `dimension-jump locus' $\mathcal Z^{dj}$, \textit{i.e.,} the support of $R^0\pi_*\mathcal S$,  from the $\Mbarstar_{1,k,l}^{1/r}$, and define 
\begin{equation*}
{\mathcal{W}}:= (R^0\pi_*\mathcal{J})_+,
\end{equation*}
as a vector bundle over $\Mbarstar_{1,k,l}^{1/r}\setminus \mathcal Z^{dj}$. 

\begin{dfn}
    For a $r$-spin (nodal) cylinder $\Sigma$, we say $\Sigma$ is \textit{dimension-jump} if there exists $\Sigma'\subseteq \Sigma$ such that
    \begin{itemize}
        \item $\Sigma'$ is a union of irreducible components of $\Sigma$;
        \item $\Sigma'$ is genus-one;
        \item the restriction of $S$ to $\Sigma'$ is a trivial line bundle.
    \end{itemize}
\end{dfn}
\begin{rmk}
    Note that $S\vert_{\Sigma'}$ is trivial implies that all the internal and boundary markings on $\Sigma'$ have twist $0$. Moreover, all the half-nodes on $\Sigma'$ connecting $\Sigma'$ to $\Sigma\setminus\Sigma'$ have twist $0$.
\end{rmk}

\begin{dfn}
    We define the dimension-jump loci inside $\Mbarstar^{1/r}_{1,k,l}$ to be the 
    \begin{equation*}
         \mathcal Z^{dj}:=\left\{[\Sigma]\in \Mbarstar^{1/r}_{1,k,l} \colon \Sigma \text{ is dimension-jump}\right\}.
    \end{equation*}

    We write 
    \begin{equation*}
        \overline{\mathcal{QM}^*}_{1,k,l}:=\Mbarstar^{1/r}_{1,k,l}\setminus \mathcal Z^{dj};
    \end{equation*}
    for a genus-one $r$-spin graph $\Gamma$, we write 
    \begin{equation*}
        \mathcal Z^{dj}_{\Gamma}:=\mathcal Z^{dj}\cap \Mbar_{\Gamma}\quad\oQMb_\Gamma:=\Mbar_\Gamma\setminus \mathcal Z^{dj}_{\Gamma}.
    \end{equation*}
    For genus-zero $r$-spin graph $\Gamma$, we formally write $\oQMb_\Gamma:=\Mbar_\Gamma$.
\end{dfn}

\begin{ex}\label{exa dim jump non separating}
    Note that all non-separating boundary nodes for $\Sigma$ in $\mathcal Z^{dj}$ are Ramond nodes (whose half-nodes have twist $r-1$), therefore for any $\Gamma$ with a non-separating NS edge with have $\oQMb_{\Gamma}=\Mbar_{\Gamma}$.     
   Moreover, when $r$ is even, by item \ref{it Ramond boundary node} of Proposition \ref{prop lifting}, Ramond boundary nodes can not appear; therefore, we have $\mathcal Z^{dj}=\emptyset$ when $r$ is even.
\end{ex}

\begin{ex}\label{exa dim jump rank zero}
    Let $\Delta$ be a smooth $r$-spin graph with a single genus-one vertex, $k$ boundary and $l$ internal markings with twist zero. Then topologically the moduli $\Mbar_\Delta$ is the disjoint union of $r$ copies  $\Mbar_{\operatorname{for}_{r}(\Gamma)}\subset \Mbar_{1,k,l}$, where $\operatorname{for}_{r}(\Gamma)$ is the dual graph forgetting the $r$-spin structure on $\Gamma$. If $r$ is even, we have $\oQMb_\Delta=\Mbar_{\Delta}$. If $r$ is odd, then the dimension-jump loci $\mathcal Z^{dj}\cap\Mbar_\Delta$ is one copy of these $\Mbar_{\operatorname{for}_{r}(\Gamma)}$. Therefore in this case $\oQMb_\Delta$ is the disjoint union of $r-1$ copies  $\Mbar_{\operatorname{for}_{r}(\Gamma)}$.
\end{ex}

We define the Witten $\mathcal W$ bundle over $\overline{\mathcal{QM}^*}_{1,k,l}$ as 
$${\mathcal{W}}:= (R^0\pi_*\mathcal{J})_+.$$
On a non-empty component of the moduli space which parametrizes $r$-spin cylinders with internal twists $\{a_i\}$ and boundary twists $\{b_j\}$, the (real) rank of the Witten bundle is
\begin{equation}\label{eq rank of witten bundle g=1}\frac{2 \sum_{i\in I} a_i + \sum_{j\in B} b_j }{r}.\end{equation}

\begin{dfn}
    We say a genus-one graded $r$-spin graph $\Delta$ is \textit{pre-dimension-jump} if there exists a genus-one vertex $v\in V(\Delta)$ such that all the half-edges of $v$ have twist zero.

     We say a genus-one graded $r$-spin graph $\Delta$ is \textit{completely-dimension-jump} if
     \begin{enumerate}
         \item all the edges in $E^B_{nsp}$ are Ramond;
         \item the graph obtained after smoothing all the edges in $E^B_{nsp}$ is pre-dimension-jump.
     \end{enumerate}
\end{dfn}
For a pre-dimension-jump graph $\Delta$, we define
$$
\oJMb_\Delta:=\mathcal{Z}^{dj}\cap \Mbar_{\Delta}.
$$
Note that $\Mbar_{\Delta}$ is topologically the disjoint union of $r$ copies of  $\oJMb_\Delta$.

For a completely-dimension-jump graph $\Delta$, we define $\oJMb_\Delta:=\Mbar_{\Delta}\subseteq \mathcal Z^{dj}$.

We say $\Delta$ is \textit{dimension-jump} if $\Delta$ is either pre-dimension-jump or completely-dimension-jump. When $\Delta$ is not dimension-jump, we set $\oJMb_\Delta:=\emptyset$.

\subsubsection{Relative cotangent line bundles}
Other important line bundles in open $r$-spin theory are the relative cotangent line bundles at internal marked points.  These line bundles have already been defined on the moduli space $\Mbar_{0,k,l}$ of stable marked disks (without spin structure) in \cite{PST14}, as the line bundles with fibre $T^*_{z_i}\Sigma$. Equivalently, these line bundles are the pullback of the usual relative cotangent line bundles ${\mathbb{L}}_i\to\Mbar_{g,k+2l}$ under the doubling map $\Mbar_{g,k,l} \to \Mbar_{g,k+2l}$ that sends $\Sigma$ to $C=\Sigma\sqcup_{\partial\Sigma}\overline{\Sigma}$.  The bundle $\mathbb{L}_i\to\Mbarstar_{g,k,l}^{1/r}$ is the pullback of this relative cotangent line bundle on $\Mbar_{g,k,l}$ under the morphism $\text{For}_{\text{spin}}$ that forgets the spin structure.  Note that $\mathbb{L}_i$ is a complex line bundle, hence it carries a canonical orientation.

\subsection*{Decomposition properties of the Witten bundle}

In \cite{BCT1} the genus-zero Witten bundle is proven to satisfy certain decomposition properties along nodes, and the argument used there applies to the genus-one Witten bundle without any change.  We state the analogue properties for genus-one here, further details and proofs can be found in \cite[Section 4.2]{BCT1}.

Given a stable graded $r$-spin graph $\Gamma$ of genus-$g$ for $g=0$ or $1$, let $\widehat{\Gamma}$ be obtained by detaching either an edge or a contracted boundary tail of $\Gamma$.  In order to state the decomposition properties of the Witten bundle, we need the morphisms
\begin{equation}
\label{eq Wittendecompsequence genus one}
\oQMb_{\widehat{\Gamma}}^{1/r} \xleftarrow{q} \Mbar_{\widehat{\Gamma}} \times_{\Mbar_{\Gamma}} \oQMb_{\Gamma}^{1/r} \xrightarrow{\mu} \oQMb_{\Gamma}^{1/r} \xrightarrow{i_{\Gamma}} \overline{\mathcal{QM}^*}_{g,k,l}^{1/r},
\end{equation}
where $\Mbar_{\Gamma} \subseteq \Mbar_{g,k,l}$ is the moduli space of marked surfaces (without $r$-spin structure) corresponding to the dual graph $\Gamma$.  The morphism $q$ is defined by sending the $r$-spin structure $S$ to the $r$-spin structure $\widehat{S}$ defined by \eqref{eq normalize S}; it has degree one but is not an isomorphism because it does not induce an isomorphism between isotropy groups.  The morphism $\mu$ is the projection to the second factor in the fibre product; it is a surjective morphism, and is an isomorphism when $\Gamma$ has no non-separating edges.  The morphism $i_{\Gamma}$ is the inclusion.

We denote by ${\mathcal{W}}$ and $\widehat{{\mathcal{W}}}$ the Witten bundles on $\overline{\mathcal{QM}^*}_{g,k,l}^{1/r}$ and $\oQMb_{\widehat{\Gamma}}^{1/r}$, the decomposition properties below show how these bundles are related under pullback via the morphisms \eqref{eq Wittendecompsequence genus one}.  

\begin{pr}
\label{prop decomposition g=0,1}
Let $\Gamma$ be a genus-zero stable or genus-one graded $r$-spin graph with a single edge $e$, and let $\widehat{\Gamma}$ be the detaching of $\Gamma$ along $e$.  Then the Witten bundle decomposes as follows:
\begin{enumerate}
\item\label{it NS} If $e$ is a Neveu--Schwarz edge, then \begin{equation}\label{eq NSdecompses}
\mu^*i_{\Gamma}^*{\mathcal{W}} = q^*\widehat{{\mathcal{W}}}.
\end{equation}

\item\label{it decompose Ramond boundary edge} If $e$ is a Ramond boundary edge, then there is an exact sequence
\begin{equation}
\label{eq decompose}0 \to \mu^*i_{\Gamma}^*{\mathcal{W}} \to q^*\widehat{{\mathcal{W}}} \to \underline{\mathbb{R}_+} \to 0,
\end{equation}
where $\underline{\mathbb{R}_+}$ is a trivial real line bundle.

\item If $e$ is a Ramond internal edge connecting two closed genus-zero vertices, write $q^*\widehat{\mathcal{W}} = \widehat{\mathcal{W}}_1 \boxplus \widehat{\mathcal{W}}_2$, where $\widehat{\mathcal{W}}_1$ is the Witten bundle on the component containing a contracted boundary tail or the anchor of $\Gamma$, and $\widehat{\mathcal{W}}_2$ is the Witten bundle on the other component.  Then there is an exact sequence
\begin{equation}
\label{eq decompose2}
0 \to \widehat{\mathcal{W}}_2 \to \mu^*i_{\Gamma}^*{\mathcal{W}} \to \widehat{\mathcal{W}}_1 \to 0.
\end{equation}
Furthermore, if $\widehat\Gamma'$ is defined to agree with $\widehat\Gamma$ except that the twist at each Ramond tail is $r-1$, and $q': \Mbar_{\widehat{\Gamma}} \times_{\Mbar_{\Gamma}} \Mbar_{\Gamma}^{1/r} \to \Mbar_{\widehat\Gamma'}^{1/r}$ is defined analogously to $q$, then there is an exact sequence
\begin{equation}
\label{eq decompose3}
0 \to \mu^*i_{\Gamma}^*{\mathcal{W}} \to (q')^*\widehat{{\mathcal{W}}}' \to {\underline{\mathbb{C}}}^{1/r} \to 0,
\end{equation}
where $\widehat{{\mathcal{W}}}'$ is the Witten bundle on $\Mbar_{\widehat\Gamma'}^{1/r}$ and ${\underline{\mathbb{C}}}^{1/r}$ is a line bundle whose $r$-th power is trivial.

\item If $e$ is a separating Ramond internal edge connecting an open vertex to a closed vertex, write $q^*\widehat{\mathcal{W}} = \widehat{\mathcal{W}}_1 \boxplus \widehat{\mathcal{W}}_2$, where $\widehat{\mathcal{W}}_1$ is the Witten bundle on the open component (defined via $\widehat{\mathcal{S}}|_{\mathcal{C}_1}$) and $\widehat{\mathcal{W}}_2$ is the Witten bundle on the closed component.  Then the exact sequences \eqref{eq decompose2} and \eqref{eq decompose3} both hold.
\end{enumerate}
Analogously, for genus-zero $\Gamma$ which has a single vertex, no edges, and a contracted boundary tail~$t$, and $\widehat{\Gamma}$ is the detaching of $\Gamma$ along~$t$, then there is a decomposition property:
\begin{enumerate}
\setcounter{enumi}{4}
\item\label{it decompose cb tail} If ${\mathcal{W}}$ and $\widehat{\mathcal{W}}$ denote the Witten bundles on $\Mbarstar_{0,k,l}^{1/r}$ and $\Mbar_{\widehat\Gamma}^{1/r}$, respectively, then the sequence \eqref{eq decompose} holds.
\end{enumerate}
\end{pr}

\begin{rmk}\label{rmk decompose NS boundary node}
If the edge $e$ is a Neveu--Schwarz boundary edge, then the map $q$ is an isomorphism, and in this case, the proposition implies that the Witten bundle pulls back under the gluing morphism $\mu\circ q^{-1}\colon \oQMb_{\widehat\Gamma}^{1/r} \to \overline{\mathcal{QM}^*}_{g,k,l}^{1/r}$. Note that $q$ is not an isomorphism in general, because it does not induce
an isomorphism on automorphism groups (see \cite[Remark 4.5]{BCT1}).

\end{rmk}

\begin{rmk}
The Witten bundle decomposes in a more straightforward way along Neveu--Schwarz nodes than along Ramond nodes. This occurs because the NS nodes are nodes at which the isotropy group of $C$ acts nontrivially on the fibre of $S$.  Given that sections of an orbifold line bundle must be invariant under the action of the isotropy group, nontriviality of the action results in the vanishing of sections at such nodes. This leads to a splitting in the normalization exact sequence associated with $S$.
\end{rmk}

\section{Orientation}\label{sec:orientation}
In this section, we construct a canonical relative orientation of the Witten bundles over moduli spaces of graded $r$-spin disks and cylinders. 

\subsection{Orientation of moduli space}
  
\subsubsection{Genus-zero case}
  
The orientation of $\Mbarstar^{1/r}_{0,\bar{B},I}$ is studied in \cite[Section 3.3]{BCT1}, which pull-back from orientation of $\Mbar_{0,\bar{B},I}$ via the forgetful morphism. We summarize some of the properties proven there.
\begin{prop}\label{prop:orientation moduli}
    There is an unique choice of orientations $\tilde{\mathfrak o}^{x}_{0,\bar{B},I}$ of the spaces $\Mbar_{0,\bar{B},I}$ for all $\bar{B},I$ and $x\in B$, with the following properties:
    \begin{enumerate}
        \item In the zero-dimensional case where $\vert I \vert = \vert B \vert = 1$, the orientations are positive, while when $\vert B \vert = 3$ and $\vert I \vert = 0$, the orientations are negative.
        \item When $\lvert B\rvert\ge 1$, let $\sigma_2\colon B \to B$ be the cyclic order on $B$ encoded in $\bar B$, we have $\tilde{\mathfrak o}^{\sigma_2(x)}_{0,\bar{B},I}=(-1)^{\vert B \vert-1}\tilde{\mathfrak o}^{x}_{0,\bar{B},I}$.

         \item \label{item genus zero moduli orientation forget internal marking}
        For any $a \in I$, the each fibre $F$ of the forgetful morphism forgetting the marking $a$
        $$
        \operatorname{For}_a\colon \mathcal M_{0,\bar{B},I} \to \mathcal M_{0,\bar{B},I\setminus\{a\}}
        $$
        has a canonical complex structure since it is a punctured disk, and we denote the complex orientation on the fibre by $o_F$. Then we have $$\tilde{\mathfrak o}^{x}_{0,\bar{B},I}=o_F\wedge \operatorname{For}_a^*\tilde{\mathfrak o}^{x}_{0,
        \bar{B},I\setminus\{a\}}.$$ 
        
        \item \label{item genus zero orientation moduli forget boundary marking}
        We denote by
         $$
        \operatorname{For}_{\sigma_2^{-1}(x)}\colon \mathcal M_{0,\bar{B},I} \to \mathcal M_{0,\bar{B}\setminus\{\sigma_2^{-1}(x)\},I}
        $$
        the forgetful morphism forgetting the boundary marking $\sigma_2^{-1}(x)$. The fibre $G$ of $\operatorname{For}_{\sigma_2^{-1}(x)}$ is isomorphic to an interval from $\sigma_2^{-1}(\sigma_2^{-1}(x))$ to $x$, we denote by $o_G$ the orientation of $G$ induced by the complex orientation of the disk. Then we have
        $$\tilde{\mathfrak o}^{x}_{0,\bar{B},I}=o_G\wedge \operatorname{For}_{\sigma_2^{-1}(x)}^*\tilde{\mathfrak o}^{x}_{0,\bar{B}\setminus\{\sigma_2^{-1}(x)\},I}.$$

        \item Let $\Gamma$ be the graph with two open vertices connected by an edge $e$, let $h_i$ denote the half-edges of $v_i$. Let $I_i$ be the sets of internal markings of $v_i$; we write $\bar{B}_1$, the boundary half-edges of $v_1$, in its cyclic order as $\bar{B}_1=\{b_{11},b_{12},\dots,b_{1k_1},h_1\}$; we also write $\bar{B}_2$, the boundary half-edges of $v_2$, in its cyclic order as $\bar{B}_2=\{h_2,b_{21},b_{22},\dots,b_{2k_2}\}$. Then the set of internal markings of $d_e \Gamma$ is $I=I_1\sqcup I_2$, and the set of the boundary markings of $d_e \Gamma$ written in cyclic order is $\bar{B}=\{b_{11},b_{12},\dots,b_{1k_1},b_{21},b_{22},\dots,b_{2k_2}\}$. Note that we have $\det(T\Mbar_{0,\bar{B},I})\big|_{\Mbar_\Gamma} = \det(N)\otimes \det( T\Mbar_{\Gamma}),$ where $N$ is the outward normal with canonical orientation $o_N$. Then
        \begin{equation}\label{eq orientation moduli open open}
        \tilde{\mathfrak{o}}^{b_{11}}_{0,\bar B,I}\big |_{\Mbar_\Gamma^{}}=(-1)^{(k_1-1)k_2}o_N\otimes
         \left(\tilde{\mathfrak{o}}^{b_{11}}_{0,\bar B_1,I_1}\boxtimes\tilde{\mathfrak{o}}^{h_2}_{0,\bar B_2,I_2}\right).
         \end{equation}

        \item

        Let $\Gamma$ be a graph with two vertices, an open vertex $v^o$ and a closed vertex $v^c$, connected by an edge $e$.  We denote by $I$ and $I^o$ the sets of internal half-edges of $d_e\Gamma$ and $v^o$, and by $B$ the common set of boundary half-edges of $d_e\Gamma$ and $v^o$. We have $\det(T\Mbar_{0,\bar{B},I})\big|_{\Mbar_\Gamma} = \det(N)\otimes \left(\det(T\Mbar_{v^c})\boxtimes \det(T\Mbar_{v^o})\right)$, where $N$ is again the normal bundle.  Then, for every boundary marking $b\in B$, we have
        \begin{equation}\label{eq orientation moduli closed open}
        \tilde{\mathfrak{o}}^b_{0,\bar B,I}|_{\Mbar_\Gamma} = o_N\otimes (\tilde{\mathfrak{o}}^b_{0,\bar B,I^o}\boxtimes \tilde{\mathfrak{o}}_{v^c}),
        \end{equation}
        where $o_N$ and $\tilde{\mathfrak{o}}_{v^c}$ are the canonical complex orientations.

 \end{enumerate}
\end{prop}

We also denote by $\tilde{\mathfrak o}^{x}_{0,\bar{B},I}$ the orientation pulled back to $\Mbarstar^{1/r}_{0,\bar{B},I}$ via the forgetful morphism.
\begin{rmk}
    In the case $B=\emptyset$, the orientation $\tilde{\mathfrak o}^{x}_{0,\bar{B},I}$ depends only on $\bar{B}$ and $I$, the superscript $x$ is just for the sake of maintaining consistency of notation.
\end{rmk}

\subsubsection{Genus-one case} \label{sec orientation for moduli g=1}

We construct the orientation of $\Mbarstar^{1/r}_{1,\{\bar{B}^\alpha,\bar{B}^\beta\},I}$ in this subsection. We start from the orientation of the moduli space  $\mathcal M_{1,\{\bar B^{\alpha}, \bar B^\beta\},I}$ of cylinders with out $r$-spin structure.

We write $\bar B^\alpha=\overline{\{b^\alpha_1,b^\alpha_2,\dots,b^\alpha_{k_\alpha}\}}$ and $\bar B^\beta=\overline{\{b^\beta_1,b^\beta_2,\dots,b^\beta_{k_\beta}\}}$ the sets of boundary markings on each boundaries in their cyclic order. We also denote by $I=\{a_1,\dots,a_l\}$ the set of internal markings. Each cylinder $(C,\Sigma)$ in $\mathcal M_{1,\{\bar B^{\alpha}, \bar B^\beta\},I}$ can be regarded as $\mathbb C_z=\mathbb R_x \times \mathbb R_y$ quotiented by the lattice generated by $(1,0)$ and $(2\tau,0)$, where a fundamental domain of $\Sigma$ is $\{0\le x <1\}\times \{0\le y \le \tau\}$. If we fix an order of $\alpha$ and $\beta$, we can construct a space
$$
\mathcal T^{\alpha,\beta}_{1,(\bar{B}^\alpha,\bar{B}^\beta),I}:=\{(\tau,z_1,\dots,z_l,x^\alpha_1,\dots,x^\alpha_{k_\alpha},x^\beta_1,\dots,x^\beta_{k_\beta})\}
$$
where
\begin{itemize}
    \item $\tau \in \mathbb R^{>0}$;
    \item $z_1,\dots,z_l$ are different points in $\mathring{\Sigma}\subset \frac{\mathbb C}{\left<(1,0),(0,2\tau)\right>}$,
    \item $x^\alpha_1,\dots,x^\alpha_{k_\alpha}\in S^{1,\alpha}:=\frac{\mathbb R^\alpha}{\left<1\right>}$, and they are in cyclic order with respect to natural order induced by the real line $\mathbb R^\alpha$;
     \item $x^\beta_1,\dots,x^\beta_{k_\beta}\in S^{1,\beta}:=\frac{\mathbb R^\beta}{\left<1\right>}$, and they are in reversed cyclic order with respect to natural order induced by the real line $\mathbb R^\beta$.
\end{itemize}

There is a $\mathbb R_t=\mathbb R_t^{\alpha,\beta}$ action on $\mathcal T^{\alpha,\beta}_{1,(\bar{B}^\alpha,\bar{B}^\beta),I}$, where for $t\in \mathbb R_t$, the action is given by
$$
\left(t,(\tau,z_1,\dots,z_l,x^\alpha_1,\dots,x^\alpha_{k_1},x^\beta_1,\dots,x^\beta_{k_2})\right)\mapsto(\tau,z_1+t,\dots,z_l+t,x^\alpha_1+t,\dots,x^\alpha_{k_1}+t,x^\beta_1+t,\dots,x^\beta_{k_2}+t).
$$
We denote the quotient of this action by
$$
\mathcal M^{\alpha,\beta}_{1,(\bar{B}^\alpha,\bar{B}^\beta),I}:=\mathcal T^{\alpha,\beta}_{1,(\bar{B}^\alpha,\bar{B}^\beta),I}\big/ \mathbb R_t.
$$

There is a map 
$$
\pi_{\mathcal T \to \mathcal M}\colon \mathcal T^{\alpha,\beta}_{1,(\bar{B}^\alpha,\bar{B}^\beta),I}\to \mathcal M_{1,\{\bar{B}^\alpha,\bar{B}^\beta\},I}
$$ which send $(\tau,z_1,\dots,z_l,x^\alpha_1,\dots,x^\alpha_{k_1},x^\beta_1,\dots,x^\beta_{k_2})$ to the cylinder represented by $\frac{\mathbb C}{\left<(1,0),(0,2\tau)\right>}$, with internal markings locate at $z_1,\dots,z_l$, and boundary markings locate at $(x^\alpha_1,0),\dots,(x^\alpha_{k_\alpha},0)$ and $(x^\beta_1,\tau),\dots,(x^\beta_{k_\alpha},\tau)$. Notice that the orientation on $\frac{\mathbb R_x\times \{y=0\}}{\left< (1,0)\right>}$ induced from the complex orientation on $\Sigma$ coincides with th one induced from the natural orientation on $\mathbb R_x$, while the orientation on $\frac{\mathbb R_x\times \{y=\tau\}}{\left< (1,0)\right>}$ induced from the complex orientation on $\Sigma$ is the reverse of one induced from the natural orientation on $\mathbb R_x$. Therefore, both  $(x^\alpha_1,0),\dots,(x^\alpha_{k_\alpha},0)$ and $(x^\beta_1,\tau),\dots,(x^\beta_{k_\alpha},\tau)$ are in cyclic order induce by the complex orientation of $\Sigma$ on the corresponding boundary as required.

Notice that $\pi_{\mathcal T \to \mathcal M}$ is invariant under the action of $\mathbb R_t$, it induces a map $$\operatorname{For}_{\alpha,\beta}\colon \mathcal M^{\alpha,\beta}_{1,(\bar{B}^\alpha,\bar{B}^\beta),I}\to \mathcal M_{1,\{\bar{B}^\alpha,\bar{B}^\beta\},I}.$$
In fact, the space $ \mathcal M^{\alpha,\beta}_{1,(\bar{B}^\alpha,\bar{B}^\beta),I}$ parametrizes cylinders $(C,\Sigma)$ in $ \mathcal M_{1,\{\bar{B}^\alpha,\bar{B}^\beta\},I}$ with an extra datum: the two boundaries  of $\Sigma$ are labelled by $\alpha$ and $\beta$. Therefore, in the case $\bar{B}^\alpha=\bar{B}^\beta=\emptyset$, the map $\operatorname{For}_{\alpha,\beta}$ is a double covering; in the case where at least one of $\bar{B}^\alpha$ or $\bar{B}^\beta$ is non-empty, $\operatorname{For}_{\alpha,\beta}$ is an isomorphism since for any cylinder in $ \mathcal M_{1,\{\bar{B}^\alpha,\bar{B}^\beta\},I}$, the label of two boundaries is already determined by distribution of boundary markings.

If we exchange the order of $\alpha$ and $\beta$, we have a map
$$
\operatorname{Exch}\colon\mathcal T^{\alpha,\beta}_{1,(\bar{B}^\alpha,\bar{B}^\beta),I}\to \mathcal T^{\beta,\alpha}_{1,(\bar{B}^\beta,\bar{B}^\alpha),I}
$$
which maps
$$
(\tau,z_1,\dots,z_l,x^\alpha_1,\dots,x^\alpha_{k_1},x^\beta_1,\dots,x^\beta_{k_2})$$ to 
\begin{equation}\label{eq def of exchange before quotient}
(\tau,-z_1+\tau\sqrt{-1},\dots,-z_l+\tau\sqrt{-1},-x^\beta_1,\dots,-x^\beta_{k_2},-x^\alpha_1,\dots,-x^\alpha_{k_1}).
\end{equation}
Since $\operatorname{Exch}$ commutes with the actions of $\mathbb R_t^{\alpha,\beta}$ and $\mathbb R_t^{\beta,\alpha}$ after the a morphism $t\mapsto -t$ between $\mathbb R_t^{\alpha,\beta}$ and $\mathbb R_t^{\beta,\alpha}$, it induces a map $$\operatorname{Exch}\colon\mathcal M^{\alpha,\beta}_{1,(\bar{B}^\alpha,\bar{B}^\beta),I}\to \mathcal M^{\beta,\alpha}_{1,(\bar{B}^\beta,\bar{B}^\alpha),I},$$ 

In the case where at least one of $\bar{B}^\alpha$ or $\bar{B}^\beta$ is non-empty, $For_{\alpha,\beta}\circ \operatorname{Exch} \circ For^{-1}_{\alpha,\beta}$ coincides with the identical map of $\mathcal M_{1,\{\bar{B}^\alpha,\bar{B}^\beta\},I}$.

In the case where $\bar{B}^\alpha=\bar{B}^\beta=\emptyset$, we have another morphism between $\mathcal T^{\alpha,\beta}_{1,(\emptyset^\alpha,\emptyset^\beta),I}$ and $\mathcal T^{\beta,\alpha}_{1,(\emptyset^\beta,\emptyset^\alpha),I}$ sending $(\tau,z_1,\dots,z_l)$ to $(\tau,z_1,\dots,z_l)$, which induce an isomorphism 
$$\Id_{\emptyset,\emptyset} \colon \mathcal M^{\alpha,\beta}_{1,(\bar{B}^\beta,\bar{B}^\alpha),I} \to \mathcal M^{\beta,\alpha}_{1,(\bar{B}^\beta,\bar{B}^\alpha),I};$$
the composition $\operatorname{Exch}\circ \Id_{\emptyset,\emptyset}$
 coincides with the map exchanging the two covers of $\operatorname{For}_{\alpha,\beta}$.

For any choice of $x^\alpha_i\in \bar B^\alpha$ and $x^\alpha_j\in \bar B^\beta$ (in the case $\bar B^\alpha$ or $\bar B^\beta$ is empty we choose them as formal notations), we define a form on $\mathcal T^{\alpha,\beta}_{1,(\bar{B}^\alpha,\bar{B}^\beta),I}$ by
\begin{equation}\label{eq definition orientaion moduli genus one before quotient}
\begin{split}
    \tilde{\mathfrak o}^{x^\alpha_i,x^\beta_j,\mathcal T}_{1,(\bar{B}^\alpha,\bar{B}^\beta),I}:=& dx^\alpha_{i-1}\wedge dx^\alpha_{i-2}\wedge\dots\wedge dx^\alpha_{2}\wedge dx^\alpha_{1} \wedge dx^\alpha_{k_\alpha} \wedge\dots dx^\alpha_{i+1}\wedge dx^\alpha_{i}\\
    &\wedge dx^\beta_{j-1}\wedge dx^\beta_{j-2}\wedge\dots\wedge dx^\beta_{2}\wedge dx^\beta_{1} \wedge dx^\beta_{k_\beta} \wedge\dots dx^\beta_{j+1}\wedge dx^\beta_{j}\\
    &\wedge \frac{\sqrt{-1}}{2}dz_1\wedge d\bar{z}_1\wedge  \frac{\sqrt{-1}}{2}dz_2\wedge d\bar{z}_2\wedge \dots \wedge \frac{\sqrt{-1}}{2}dz_l\wedge d\bar{z}_l \wedge d\tau.
\end{split}
    \end{equation}
Since locally $\mathcal T^{\alpha,\beta}_{1,(\bar{B}^\alpha,\bar{B}^\beta),I}$ is the product of $\mathcal M^{\alpha,\beta}_{1,(\bar{B}^\alpha,\bar{B}^\beta),I}$ and $\mathbb R_t$, if we denote by $o_{\mathbb R_t}$ the canonical orientation on $\mathbb R_t$, the orientation $\tilde{\mathfrak o}^{x^\alpha_i,x^\beta_j,\mathcal T}_{1,(\bar{B}^\alpha,\bar{B}^\beta),I}$ on $\mathcal T^{\alpha,\beta}_{1,(\bar{B}^\alpha,\bar{B}^\beta),I}$ induces an orientation $\tilde{\mathfrak o}^{x^\alpha_i,x^\beta_j}_{1,(\bar{B}^\alpha,\bar{B}^\beta),I}$ on $\mathcal M^{\alpha,\beta}_{1,(\bar{B}^\alpha,\bar{B}^\beta),I}$ satisfying
$$\tilde{\mathfrak o}^{x^\alpha_i,x^\beta_j,\mathcal T}_{1,(\bar{B}^\alpha,\bar{B}^\beta),I}=\tilde{\mathfrak o}^{x^\alpha_i,x^\beta_j}_{1,(\bar{B}^\alpha,\bar{B}^\beta),I}\wedge o_{\mathbb R_t}.$$

The orientation $\tilde{\mathfrak o}^{x^\alpha_i,x^\beta_j}_{1,(\bar{B}^\alpha,\bar{B}^\beta),I}$ on $\mathcal M^{\alpha,\beta}_{1,(\bar{B}^\alpha,\bar{B}^\beta),I}$ extends to the entire compactification $\Mbar^{\alpha,\beta}_{1,(\bar{B}^\alpha,\bar{B}^\beta),I}$ as $\Mbar^{\alpha,\beta}_{1,(\bar{B}^\alpha,\bar{B}^\beta),I}\setminus \mathcal M^{\alpha,\beta}_{1,(\bar{B}^\alpha,\bar{B}^\beta),I}$ consists of boundaries and codimensional-two strata.

\begin{prop}\label{prop orientation moduli g=1}
    The orientations $\tilde{\mathfrak o}^{x^\alpha_i,x^\beta_j}_{1,(\bar{B}^\alpha,\bar{B}^\beta),I}$ satisfy the following property.
    \begin{enumerate}
        \item \label{item orientation of moduli when change starting point genus one}
        We have
        \begin{equation}\label{eq orientation of moduli when change starting point genus one}
        \tilde{\mathfrak o}^{x^\alpha_{i+1},x^\beta_j}_{1,(\bar{B}^\alpha,\bar{B}^\beta),I}=(-1)^{\lvert B^\alpha \rvert+1}\tilde{\mathfrak o}^{x^\alpha_{i},x^\beta_j}_{1,(\bar{B}^\alpha,\bar{B}^\beta),I} \text{ and }
        \tilde{\mathfrak o}^{x^\alpha_{i},x^\beta_{j+1}}_{1,(\bar{B}^\alpha,\bar{B}^\beta),I}=(-1)^{\lvert B^\beta \rvert+1}\tilde{\mathfrak o}^{x^\alpha_{i},x^\beta_j}_{1,(\bar{B}^\alpha,\bar{B}^\beta),I}.
        \end{equation}
        \item \label{item orientation of moduli when exchanging boundary genus one} We have
        \begin{equation}\label{eq orientation moduli when exchange boundaries genus one}
            \tilde{\mathfrak o}^{x^\alpha_i,x^\beta_{j}}_{1,(\bar{B}^\alpha,\bar{B}^\beta),I}=(-1)^{(\lvert B^\alpha \rvert+1)(\lvert B^\beta \rvert+1)}\operatorname{Exch}^*\tilde{\mathfrak o}^{x^\beta_{j},x^\alpha_i}_{1,(\bar{B}^\beta,\bar{B}^\alpha),I}.
        \end{equation}

        \item \label{item orientation of moduli without boundary points id}
        When $B^\alpha=B^\beta=\emptyset$, we have
        \begin{equation}\label{eq orientation of moduli without boundary points id}
            \tilde{\mathfrak o}^{x^\beta,x^\alpha}_{1,(\bar{B}^\beta,\bar{B}^\alpha),I}=\Id_{\emptyset,\emptyset}^*\tilde{\mathfrak o}^{x^\alpha,x^\beta}_{1,(\bar{B}^\alpha,\bar{B}^\beta),I},
            \end{equation}
            where $x^\beta$ and $x^\alpha$ are just formal notations.        
        \item \label{item genus one moduli orientation forget internal marking}
        For any $a \in I$, the each fibre $F$ of the forgetful morphism forgetting the marking $a$
        $$
        \operatorname{For}_a\colon \mathcal M^{\alpha,\beta}_{1,(\bar{B}^\alpha,\bar{B}^\beta),I} \to \mathcal M^{\alpha,\beta}_{1,(\bar{B}^\alpha,\bar{B}^\beta),I\setminus\{a\}}
        $$
        has a canonical complex structure since it is a punctured cylinder, and we denote the complex orientation on the fibre by $o_F$. Then we have $$\tilde{\mathfrak o}^{x^\alpha_{i},x^\beta_j}_{1,(\bar{B}^\alpha,\bar{B}^\beta),I}=o_F\wedge \operatorname{For}_a^*\tilde{\mathfrak o}^{x^\alpha_{i},x^\beta_j}_{1,(\bar{B}^\alpha,\bar{B}^\beta),I\setminus\{a\}}.$$

        \item \label{item genus one orientation moduli forget boundary marking}
        Since $(x^\alpha_{i-1},0) \in \bar{B}^\alpha$ is a boundary marking, we denote by
         $$
        \operatorname{For}_{\alpha,i-1}\colon \mathcal M^{\alpha,\beta}_{1,(\bar{B}^\alpha,\bar{B}^\beta),I} \to \mathcal M^{\alpha,\beta}_{1,(\bar{B}^\alpha\setminus\{(x^\alpha_{i-1},0)\},\bar{B}^\beta),I}
        $$
        the forgetful morphism forgetting this boundary marking. This fibre $G$ of $\operatorname{For}_{\alpha,i-1}$ is isomorphic to an interval from $(x^\alpha_{i-2},0)$ to $(x^\alpha_{i},0)$, we denote by $o_G$ the orientation of $G$ induced by the complex orientation of the cylinder. Then we have
        $$\tilde{\mathfrak o}^{x^\alpha_{i},x^\beta_j}_{1,(\bar{B}^\alpha,\bar{B}^\beta),I}=o_G\wedge \operatorname{For}_{\alpha,i-1}^*\tilde{\mathfrak o}^{x^\alpha_{i},x^\beta_j}_{1,(\bar{B}^\alpha\setminus\{(x^\alpha_{i-1},0)\},\bar{B}^\beta),I}.$$ 
        
        \item \label{item orientation moduli open open g=1}
        Let $\Gamma$ be the graph with two open vertices: genus-zero $v_1$ and genus-one $v_2$, connected by an separating boundary edge $e$. Let $h_i$ denote the half-edges of $v_i$. Let $I_i$ be the sets of internal markings of $v_i$; we write $\bar{B}_1$, the boundary half-edges of $v_1$, in its cyclic order as $\bar{B}_1=\{b_{11},b_{12},\dots,b_{1k_1},h_1\}$; we also write $\bar{B}^1_2$ and $\bar{B}^2_2$, the sets boundary half-edges of $v_2$ on each boundaries (where $h_1\in \bar{B}_{2}^1$), in their cyclic order as $\bar{B}_{2}^\alpha=\overline{\{h_2,b_{21}^1,b_{22}^1,\dots,b_{2k_2^1}^1\}}$ and $\bar{B}_{2}^\beta=\overline{\{b_{21}^2,b_{22}^2,\dots,b_{2k_2^2}^2\}}$. Then the set of internal markings of $d_e \Gamma$ is $I=I_1\sqcup I_2$, and the sets of boundary markings of $d_e \Gamma$ on each boundaries written in their cyclic order is $\bar{B}^\alpha=\overline{\{b_{11},b_{12},\dots,b_{1k_1},b_{21}^1,b_{22}^1,\dots,b_{2k_2^1}^1\}}$ and $\bar{B}^\beta=\bar{B}_{2}^2=\overline{\{b_{21}^2,b_{22}^2,\dots,b_{2k_2^2}^2\}}$. 
        Note that we have $\det(T\Mbar_{1,\{\bar{B}^\alpha,\bar B^\beta\},I})\big|_{\Mbar_\Gamma} = \det(N)\otimes \det( T\Mbar_{\Gamma}),$ where $N$ is the outward normal with canonical orientation $o_N$. Then \footnote{In the case $B^\alpha=B^\beta=\emptyset$, by abuse of notation, we also denote by $\Mbar_{\Gamma}$ its preimage in the double cover.}
        \begin{equation}\label{eq orientation moduli open open g=1}
        \tilde{\mathfrak{o}}^{b_{11},b_{21}^2}_{1,(\bar{B}^\alpha,\bar B^\beta),I}\big |_{\Mbar_\Gamma^{}}=(-1)^{(k_1-1)k_2^1}o_N\otimes
         \left(\tilde{\mathfrak{o}}^{b_{11}}_{0,\bar B_1,I_1}\boxtimes\tilde{\mathfrak{o}}^{h_2,b_{21}^2}_{1,(\bar B_2^\alpha, \bar B_2^\beta),I_2}\right).
         \end{equation}

         \item \label{item orientation moduli nonseperating}
          Let $\Gamma$ be a graph consisting of an open genus-zero vertex $v$ and a non-separating boundary edge $e$ connecting $v$ to itself, with two half-edges $h_1$ and $h_2$.  Let $I$ be the sets of internal markings of $v$; we write $\bar{B}_v$, the boundary half-edges of $v$, in its cyclic order as $\bar{B}_v=\overline{\{b_{11},b_{12},\dots,b_{1k_1},h_2,b_{21},b_{22},\dots,b_{2k_1},h_1\}}$. Then the set of internal markings of $d_e \Gamma$ is also $I$, and the sets of boundary markings of $d_e \Gamma$ on each boundaries written in their cyclic order is $\bar{B}^\alpha=\overline{\{b_{11},b_{12},\dots,b_{1k_1}\}}$ and $\bar{B}^\beta=\overline{\{b_{21},b_{22},\dots,b_{2k_1}\}}$. 
          Note that We have $\det(T\Mbar_{1,\{\bar{B}^\alpha,\bar B^\beta\},I})\big|_{\Mbar_\Gamma} = \det(N)\otimes \det( T\Mbar_{\Gamma}),$ where $N$ is the outward normal with canonical orientation $o_N$. Then 
        \begin{equation}\label{eq orientation genus one moduli non-separating}
        \tilde{\mathfrak{o}}^{b_{11},b_{21}}_{1,(\bar{B}^\alpha,\bar{B}^\beta),I}\big |_{\Mbar_\Gamma^{}}=
         (-1)^{k_1 k_2+k_1+k_2}o_N\otimes \tilde{\mathfrak{o}}^{b_{11}}_{0,\bar B_v,I}.
         \end{equation}

        \item \label{item orientation moduli closed open g=1}

        Let $\Gamma$ be a graph with two vertices, an open genus-one vertex $v^o$ and a closed genus-zero vertex $v^c$, connected by an separating internal edge $e$.  We denote by $I$ and $I^o$ the sets of internal half-edges of $d_e\Gamma$ and $v^o$, and by $\bar B^\alpha$, $\bar B^\beta$ the common sets of boundary half-edges of $d_e\Gamma$ and $v^o$ on each boundaries. We have $\det(T\Mbar_{1,\{\bar{B}^\alpha,\bar B^\beta\},I})\big|_{\Mbar_\Gamma} = \det(N)\otimes \left(\det(T\Mbar_{v^c})\boxtimes \det(T\Mbar_{v^o})\right)$, where $N$ is again the normal bundle.  Then, for any boundary marking $b_1\in B^\alpha$ and $b_2\in B^\beta$, we have
        \begin{equation}\label{eq orientation moduli closed open g=1}
        \tilde{\mathfrak{o}}^{b_1,b_2}_{1,(\bar{B}^\alpha,\bar B^\beta),I}|_{\Mbar_\Gamma} = o_N\otimes (\tilde{\mathfrak{o}}^{b_1,b_2}_{1,(\bar{B}^\alpha,\bar B^\beta),I^o}\boxtimes \tilde{\mathfrak{o}}_{v^c}),
        \end{equation}
        where $o_N$ and $\tilde{\mathfrak{o}}_{v^c}$ are the canonical complex orientations.

    \end{enumerate}
\end{prop}

\begin{proof}
    Item \ref{item orientation of moduli when change starting point genus one} follows from a similar property for $\tilde{\mathfrak o}^{x^\alpha_i,x^\beta_j}_{1,(\bar{B}^\alpha,\bar{B}^\beta),I}$, \textit{i.e.,} by \eqref{eq definition orientaion moduli genus one before quotient} they satisfy 
    $$\tilde{\mathfrak o}^{x^\alpha_{i+1},x^\beta_j,\mathcal T}_{1,(\bar{B}^\alpha,\bar{B}^\beta),I}=(-1)^{\lvert B^\alpha \rvert+1}\tilde{\mathfrak o}^{x^\alpha_{i},x^\beta_j,\mathcal T}_{1,(\bar{B}^\alpha,\bar{B}^\beta),I}
    \text{ and }\tilde{\mathfrak o}^{x^\alpha_{i},x^\beta_{j+1},\mathcal T}_{1,(\bar{B}^\alpha,\bar{B}^\beta),I}=(-1)^{\lvert B^\beta \rvert+1}\tilde{\mathfrak o}^{x^\alpha_{i},x^\beta_j,\mathcal T}_{1,(\bar{B}^\alpha,\bar{B}^\beta),I}.$$

    For item \ref{item orientation of moduli when exchanging boundary genus one}, by \eqref{eq def of exchange before quotient} and \eqref{eq definition orientaion moduli genus one before quotient} we have
    \begin{equation*}
        \begin{split}
            \operatorname{Exch}^*\tilde{\mathfrak o}^{x^\beta_j,x^\alpha_{i},\mathcal T}_{1,(\bar{B}^\alpha,\bar{B}^\beta),I}=
    &(-1)^{k_\alpha+k_\beta}dx^\beta_{j-1}\wedge dx^\beta_{j-2}\wedge\dots\wedge dx^\beta_{2}\wedge dx^\beta_{1} \wedge dx^\beta_{k_\beta} \wedge\dots dx^\beta_{j+1}\wedge dx^\beta_{j}\\
    & \wedge  dx^\alpha_{i-1}\wedge dx^\alpha_{i-2}\wedge\dots\wedge dx^\alpha_{2}\wedge dx^\alpha_{1} \wedge dx^\alpha_{k_\alpha} \wedge\dots dx^\alpha_{i+1}\wedge dx^\alpha_{i}\\
    &\wedge \frac{\sqrt{-1}}{2}dz_1\wedge d\bar{z}_1\wedge  \frac{\sqrt{-1}}{2}dz_2\wedge d\bar{z}_2\wedge \dots \wedge \frac{\sqrt{-1}}{2}dz_l\wedge d\bar{z}_l \wedge d\tau\\
            =&(-1)^{k_\alpha k_\beta +k_\alpha+k_\beta}\tilde{\mathfrak o}^{x^\alpha_{i},x^\beta_j,\mathcal T}_{1,(\bar{B}^\alpha,\bar{B}^\beta),I}\\
            =&(-1)^{\lvert B^\alpha \rvert\lvert B^\beta \rvert+\lvert B^\alpha \rvert+\lvert B^\beta\rvert}\tilde{\mathfrak o}^{x^\alpha_{i},x^\beta_j,\mathcal T}_{1,(\bar{B}^\alpha,\bar{B}^\beta),I}.
        \end{split}
    \end{equation*}
    On the other hand, the morphism $t\mapsto -t$ between $\mathbb R_t^{\alpha,\beta}$ and $\mathbb R_t^{\beta,\alpha}$ reverts the canonical orientations, which means we have an addition $-1$ when comparing the induced orientations on $\mathcal M^{\alpha,\beta}_{1,(\bar{B}^\alpha,\bar{B}^\beta),I}$, hence \eqref{eq orientation moduli when exchange boundaries genus one} holds.

    Item \ref{item orientation of moduli without boundary points id}, \ref{item genus one moduli orientation forget internal marking} and \ref{item genus one orientation moduli forget boundary marking} also follows from their analogue versions for orientations on $\mathcal T$, which can be deduced directly from the definition \eqref{eq definition orientaion moduli genus one before quotient}.

    Items \ref{item orientation moduli open open g=1}, \ref{item orientation moduli nonseperating} and \ref{item orientation moduli closed open g=1} can be proven using the same inductive argument as in the proof of \cite[Lemma 3.15]{BCT1}.
\end{proof}

We denote by $\Mbarstar^{1/r,\alpha,\beta}_{1,(\bar{B}^\alpha,\bar{B}^\beta),I}$ the fibre product of  $\Mbarstar^{1/r}_{1,\{\bar{B}^\alpha,\bar{B}^\beta\},I}$  and  $\Mbar^{\alpha,\beta}_{1,(\bar{B}^\alpha,\bar{B}^\beta),I}$ over $\Mbar^{}_{1,\{\bar{B}^\alpha,\bar{B}^\beta\},I}$. It parametrizes graded $r$-spin cylinders with labelled boundary components. 
Since the morphism from $\Mbarstar^{1/r,\alpha,\beta}_{1,(\bar{B}^\alpha,\bar{B}^\beta),I}$ to $\Mbar^{\alpha,\beta}_{1,(\bar{B}^\alpha,\bar{B}^\beta),I}$ is the forgetful morphism forgetting the $r$-spin structures, it is locally a diffeomorphism on the coarse underlying level, and the orientation $\tilde{\mathfrak o}^{x^\alpha_i,x^\beta_j}_{1,(\bar{B}^\alpha,\bar{B}^\beta),I}$ on $\Mbar^{\alpha,\beta}_{1,(\bar{B}^\alpha,\bar{B}^\beta),I}$ induces an orientation (still denoted by $\tilde{\mathfrak o}^{x^\alpha_i,x^\beta_j}_{1,(\bar{B}^\alpha,\bar{B}^\beta),I}$) on $\Mbarstar^{1/r,\alpha,\beta}_{1,(\bar{B}^\alpha,\bar{B}^\beta),I}$, satisfying the same properties listed in Proposition \ref{prop orientation moduli g=1}.

On the other hand, if at least one of $B^\alpha$ is $B^\beta$ is non-empty, the morphism $$\operatorname{For}^{1/r}_{\alpha,\beta}\colon \Mbarstar^{1/r,\alpha,\beta}_{1,(\bar{B}^\alpha,\bar{B}^\beta),I} \to \Mbarstar^{1/r}_{1,\{\bar{B}^\alpha,\bar{B}^\beta\},I}$$ 
forgetting the label of boundary components is an isomorphism as  $\operatorname{For}^{}_{\alpha,\beta} \colon \Mbar^{\alpha,\beta}_{1,(\bar{B}^\alpha,\bar{B}^\beta),I}\to \Mbar^{}_{1,\{\bar{B}^\alpha,\bar{B}^\beta\},I}$ is an isomorphism in this case. 
In the case $B^\alpha=B^\beta=\emptyset$, the morphism $\operatorname{For}^{1/r}_{\alpha,\beta}$ is a double covering. The morphisms $\Id_{\emptyset,\emptyset}$ and $\operatorname{Exch}$ also lift to the  moduli spaces with $r$-spin structures and satisfy the same properties.

\subsection{Orientation of Witten bundle}
Let $\Mbarstar$ be a moduli space of graded $r$-spin disks {or cylinders} (with possible illegal boundary markings) and $\mathcal M^*\subseteq \Mbarstar$ be its smooth locus. Let $\mathcal W$ be the Witten bundle over $\overline{\mathcal{QM}^*}$ (recall that in genus-zero $\overline{\mathcal{QM}^*}=\Mbarstar$). In this subsection, we construct a relative orientation of $\mathcal W \to \overline{\mathcal{QM}^*}$. Since $\overline{\mathcal{QM}^*} \setminus \mathcal M^*$ consists of boundaries and strata of codimension at least $2$, it is enough to construct a relative orientation of $\mathcal W \to \mathcal M^*$.

Let $\mathcal C\to \mathcal M^*$ be the universal curve and $\mathcal J\to \mathcal C$ be the universal Serre-dual bundle as in \eqref{eq universal companion bundle}, we denote by $\lvert \mathcal C \rvert$ and $\lvert \mathcal J \rvert$ the underlying coarse curve and line bundle. We put $N=\operatorname{rank} \mathcal W$.

\subsubsection{Construction of orientation for genus-zero Witten bundles}

{When $g=0$,} let $p,q_1,q_2,\dots,q_N\colon \mathcal M^*\to \lvert \mathcal C \rvert^{\lvert \phi\rvert}$ be continuous choices of boundary points $p,q_1,q_2,\dots,q_N$ in the $\lvert \phi\rvert$-fixed locus of $\lvert \mathcal C \rvert$  such that 
\begin{itemize}
    \item $p,q_1,q_2,\dots,q_N$ do not touch each other and they are in the cyclic order induced by the canonical orientation on $\partial \Sigma\subset \lvert C\rvert$ on each fibre;
    \item $p$ does not touch any boundary marked points.
\end{itemize}

Note that for each fibre $C$ of $\mathcal C \to \mathcal M^*$, the underlying coarse curve $\lvert C\rvert$ is a rational curve, and the underlying coarse line bundle $\lvert J\rvert$ over $\lvert C\rvert$ is a degree-$(N-1)$ line bundle. Then, for each $1\le i \le N$, the line bundle $\mathcal \lvert J\rvert\otimes \mathcal O\left([q_i]-\sum_{j=1}^N [q_j]\right)$ is trivial since it is a degree-$0$ line bundle over a rational curve.
We denote by $s_i$ the unique (up to positively scaling) section of $\lvert J\rvert$ such that 
\begin{itemize}
    \item $s_i$ has simple zeros at $q_1,q_2,\dots,q_{i-1},q_{i+1},\dots,q_{N}$;
    \item $s_i$ is negative (with respect to the grading) at $p$.
\end{itemize}
Due to the canonical isomorphism $H^0(J)=H^0(\lvert J\rvert)$, we can regard $s_i$ as sections of Witten bundle $\mathcal W$ up to a positively scaling. Note that the sections $\{s_i\}_{i= 1,\dots,N}$ are linearly independent since all of them vanish at $q_j$ except for $s_j$.  We denote by $\mathfrak o(p,q_1,\dots,q_N)$   the orientation of $\mathcal W$ given by
$$
s_1\wedge s_2\dots \wedge s_N;
$$
in the case $N=0$, we define $\mathfrak o(p)$ to be the positive orientation.

\begin{lem}
    The orientation $\mathfrak o(p,q_1,\dots,q_N)$ is independent of the choice of $q_1,\dots,q_N$; it depends only on the choice of $p$.
\end{lem}

\begin{proof}
For a fixed boundary point $p$, let $p,q'_1,q'_2,\dots,q'_N$ be another continuous choice of boundary points in cyclic order (induced by the canonical orientation on $\partial \Sigma$), and let $s'_1,s'_2,\dots,s'_N$ be the corresponding sections of $\mathcal J$. We can find a  homotopy $H\colon {\mathcal M^{*}}^{\times N}\times [0,1]\to \lvert \mathcal C \rvert^{\lvert \phi\rvert}$  between $q_1,q_2,\dots,q_N$ and $q'_1,q'_2,\dots,q'_N$ which preserves the cyclic order. One such homotopy is the linear homotopy by regarding $\partial \Sigma\setminus\{p\}$ as a parametrized interval. For each $t\in [0,1]$, the induced sections $s^H_i(t)$ are linearly independent, thus $s^H_1(t)\wedge s^H_2(t)\dots \wedge s^H_N(t)$ homotopes between $s_1\wedge s_2\dots \wedge s_N$ and $s'_1\wedge s'_2\dots \wedge s'_N$ without vanishing. Thus $\mathfrak o(p,q_1,\dots,q_N)=\mathfrak o(p,q'_1,\dots,q'_N)$, which means that $\mathfrak o(p,q_1,\dots,q_N)$ depends only on the choice of $p$. 
\end{proof}
We write $\mathfrak o(p):=\mathfrak o(p,q_1,\dots,q_N)$ for any choice of $q_1,\dots,q_N$.
Let $p$ and $p'$ be two boundary points, we compare $\mathfrak o(p)$ and $\mathfrak o(p')$. If there are no legal boundary marked points on the interval between $p$ and $p'$, then we have $\mathfrak o(p)=\mathfrak o(p')$ since $s_i=s'_i$ for all $i$ under the choice of $q_i$ such that all $q_i$ are not on that interval between $p$ and $p'$. If there is one legal boundary marked point on the interval between $p$ and $p'$, then we have 
\begin{equation} \label{eq change of orientation of witten when cross a legal in g 0}
\mathfrak o(p)=(-1)^N\mathfrak o(p')
\end{equation}
since we can make $s_i=-s'_i$ for all $i$ under the above choice of $q_i$.

\begin{dfn}
    Let $\mathcal W_{0,\bar{B},I}$ be the Witten bundle over $\overline{\mathcal M^*}^{1/r}_{0,\bar{B},I}$, for each boundary marking $x\in B$, we define the orientation $\mathfrak o^{x}_{0,\bar{B},I}$ of $\mathcal W_{0,\bar{B},I}$ to be $\mathfrak o(p)$, where $p$ is a point on the arc from  $\sigma_2^{-1}(x)$ to $x$. In the case $B=\emptyset$, we define $\mathfrak o^{x}_{0,\bar{B},I}$ to be $\mathfrak o(p)$, where $p$ is an arbitrary boundary point; the superscript $x$ is only for the sake of maintaining symbol consistency in this case again.
\end{dfn}

\begin{lem}\label{lem independent of choice of point g=0}
When all boundary points are legal, the relative orientation $\tilde{\mathfrak o}^{x}_{0,\bar{B},I}\otimes {\mathfrak o}^{x}_{0,\bar{B},I}$ is independent of the choice of $x\in B$.
\end{lem}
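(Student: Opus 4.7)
By the cyclic symmetry of the statement, it suffices to show that
\[
\tilde{\mathfrak{o}}^{\sigma_2(x)}_{0,\bar B,I}\otimes {\mathfrak{o}}^{\sigma_2(x)}_{0,\bar B,I}
= \tilde{\mathfrak{o}}^{x}_{0,\bar B,I}\otimes {\mathfrak{o}}^{x}_{0,\bar B,I}
\]
for every $x\in B$; iterating this identity along the cyclic order then proves independence. The plan is to compute the two sign changes separately and then to combine them using the parity identity \eqref{eq:pairty all genus}.

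First, by Proposition \ref{prop:orientation moduli}~(2), replacing $x$ by $\sigma_2(x)$ in the orientation of the moduli space produces a sign $(-1)^{|B|-1}$, i.e.\
$\tilde{\mathfrak{o}}^{\sigma_2(x)}_{0,\bar B,I}=(-1)^{|B|-1}\tilde{\mathfrak{o}}^{x}_{0,\bar B,I}$.
Next, I would compute the sign change in the Witten-bundle orientation. By definition, ${\mathfrak{o}}^{x}_{0,\bar B,I}={\mathfrak{o}}(p)$ with $p$ any point on the arc from $x$ to $\sigma_2(x)$, and ${\mathfrak{o}}^{\sigma_2(x)}_{0,\bar B,I}={\mathfrak{o}}(p')$ with $p'$ on the arc from $\sigma_2(x)$ to $\sigma_2^2(x)$. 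Moving $p$ to $p'$ along $\partial\Sigma$ in the direction of the canonical orientation crosses the single legal boundary marked point $\sigma_2(x)$, and does not cross any other marked point. By the comparison $\mathfrak{o}(p)=(-1)^N\mathfrak{o}(p')$ established in the paragraph preceding the definition of ${\mathfrak{o}}^{x}_{0,\bar B,I}$ (applied once, since we are in the legal case where every marked point crossed is legal), we obtain
\[
{\mathfrak{o}}^{\sigma_2(x)}_{0,\bar B,I}=(-1)^{N}{\mathfrak{o}}^{x}_{0,\bar B,I},\qquad N=\operatorname{rank}{\mathcal{W}}.
\]

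Combining the two sign computations yields
\[
\tilde{\mathfrak{o}}^{\sigma_2(x)}_{0,\bar B,I}\otimes{\mathfrak{o}}^{\sigma_2(x)}_{0,\bar B,I}
=(-1)^{|B|-1+N}\,\tilde{\mathfrak{o}}^{x}_{0,\bar B,I}\otimes{\mathfrak{o}}^{x}_{0,\bar B,I}.
\]
It therefore remains to verify that $|B|-1+N$ is even. By the rank formula \eqref{eq rank of witten bundle},
\[
N=\frac{2\sum_{i\in I}a_i+\sum_{j\in B}b_j-(r-2)}{r}=\frac{2\sum_{i\in I}a_i+\sum_{j\in B}b_j+2}{r}-1.
\]
Since every boundary marking is legal, the set $D$ of Proposition~\ref{prop lifting}~(5) equals $B$, so \eqref{eq:pairty all genus} with $g=0$ gives
\[
\frac{2\sum_{i\in I}a_i+\sum_{j\in B}b_j+2}{r}\equiv |B|\pmod 2,
\]
i.e.\ $N+1\equiv|B|\pmod 2$, which is exactly the parity we needed. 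This will complete the proof. The main (and essentially only) content here is the parity check, and I expect no further obstacle; the routine part is only to confirm that with a basepoint chosen inside each open arc the comparison lemma for $\mathfrak{o}(p)$ applies with exactly one crossing.
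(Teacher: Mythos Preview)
Your proof is correct and follows essentially the same approach as the paper's own argument: both use Proposition~\ref{prop:orientation moduli}(2) for the moduli-space orientation, the single-crossing comparison for $\mathfrak{o}(p)$ to get the $(-1)^N$ on the Witten-bundle side, and then the parity identity $N\equiv |B|-1\pmod 2$ coming from \eqref{eq:pairty all genus} and \eqref{eq rank of witten bundle}. Your write-up simply spells out the parity check in more detail than the paper does.
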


\begin{proof}
    According to the second item of Proposition \ref{prop:orientation moduli} we have $\tilde{\mathfrak o}^{\sigma_2(x)}_{0,\bar{B},I}=(-1)^{\vert B \vert-1}\tilde{\mathfrak o}^{x}_{0,\bar{B},I}$.
    On the other hand, since $\operatorname{rank} \mathcal W\equiv \lvert  B\rvert-1 \mod 2$ by \eqref{eq pairty all genus}
     and \eqref{eq rank of witten bundle}, we have ${\mathfrak o}^{\sigma_2(x)}_{0,\bar{B},I}=(-1)^{\vert B \vert-1}{\mathfrak o}^{x}_{0,\bar{B},I}$. 

\end{proof}

\begin{dfn}
    We define a relative orientation ${ o}^{x}_{0,\bar{B},I}$ of  $\mathcal W_{0,\bar{B},I}\to \overline {\mathcal M^*}^{1/r}_{0,\bar{B},I}$ to be \begin{equation}    \label{eq definition canonical orientation}  
    { o}^{x}_{0,\bar{B},I}:=(-1)^{m^\delta(0,\bar{B},I)}\tilde{\mathfrak o}^{x}_{0,\bar{B},I}\otimes {\mathfrak o}^{x}_{0,\bar{B},I},
    \end{equation} 
    where 
    $$m^\delta(0,\bar{B},I):=\frac{\operatorname{rank}_{\mathbb R}\mathcal W_{0,\bar{B},I}+1-\#\{b\in B\colon b \text{ legal}\}}{2}.$$
    In particular, when every markings in $B$ are legal, ${ o}^{x}_{0,\bar{B},I}$ is independent of the choice of $x$. We denote by ${ o}^{}_{0,\bar{B},I}$, the \textit{canonical relative orientation} of $\mathcal W_{0,\bar{B},I}\to \overline {\mathcal M}^{1/r}_{0,\bar{B},I}$ to be ${ o}^{x}_{0,\bar{B},I}$ for any $x\in B$.
\end{dfn}

\subsubsection{Construction of orientation for genus-one Witten bundles}
In the case $g=1$, recall that in \S \ref{sec orientation for moduli g=1} we constructed a moduli space $\Mbarstar^{1/r,\alpha,\beta}_{1,(\bar{B}^\alpha,\bar{B}^\beta),I}$ parametrizing $r$-spin cylinders with addition labels $\alpha$ and $\beta$ for their two boundary components, together with  a forgetful morphism $\operatorname{For}^{1/r}_{\alpha,\beta}\colon \Mbarstar^{1/r,\alpha,\beta}_{1,(\bar{B}^\alpha,\bar{B}^\beta),I}\to \mathcal \Mbarstar^{1/r}_{1,\{\bar{B}^\alpha,\bar{B}^\beta\},I}$ which is a double covering in the case $\bar{B}^\alpha=\bar{B}^\beta=\emptyset$ and is an isomorphism otherwise. 
We denote by $\overline{\mathcal{QM}^*}^{1/r,\alpha,\beta}_{1,(\bar{B}^\alpha,\bar{B}^\beta),I}\subseteq\Mbarstar^{1/r,\alpha,\beta}_{1,(\bar{B}^\alpha,\bar{B}^\beta),I}$ the correspond subspace after removing dimension-jump loci. 
We start from constructing orientations for the (pulled back via $\operatorname{For}^{1/r}_{\alpha,\beta}$) Witten bundle over $\overline{\mathcal{QM}^*}^{1/r,\alpha,\beta}_{1,(\bar{B}^\alpha,\bar{B}^\beta),I}\cap \Mstar^{1/r,\alpha,\beta}_{1,(\bar{B}^\alpha,\bar{B}^\beta),I}$. 

We decompose the boundary of a cylinder $\Sigma$ according to the label as $\partial \Sigma=\partial_\alpha\Sigma\sqcup \partial_\beta\Sigma$. Let $M_i$ be the number of legal marked points on $\partial_i \Sigma$ for $i=\alpha,\beta$. In the case where $\operatorname{rank} \mathcal W \ge 1$, we can choice $N_\alpha,N_\beta$ such that $N_\alpha+N_\beta=N=\operatorname{rank} \mathcal W$ and 
\begin{equation}\label{eq choice of N1 N2}
    N_j\equiv M_j+1, j=\alpha,\beta. 
\end{equation}

Let $p_\alpha,q_{\alpha,1},q_{\alpha,2},\dots,q_{\alpha,{N_\alpha}}\colon \mathcal M^*\to \lvert \mathcal C \rvert^{\lvert \phi\rvert}$ and $p_\beta,q_{\beta,1},q_{\beta,2},\dots,q_{\beta,{N_2}}\colon \mathcal M^*\to \lvert \mathcal C \rvert^{\lvert \phi\rvert}$ be continuous choices of boundary points 
in the $\lvert \phi\rvert$-fixed locus of $\lvert \mathcal C \rvert$  such that for both $j=\alpha,\beta$:
\begin{itemize}
    \item  $p_j,q_{j,1},q_{j,2},\dots,q_{j,{N_j}}$ lie in $\partial_j \Sigma\subset \lvert C\rvert$ fibrewise; they do not touch each other and they are in the cyclic order induced by the canonical orientation on $\partial_i \Sigma\subset \lvert C\rvert$ on each fibre;
    \item $p_j$ does not touch any boundary marked points.
\end{itemize}

Note that in $g=1$ case, as $\Mstar$ does not contain any dimension-jump locus, we have 
$$\deg \lvert J\rvert=h^0(\lvert J\rvert)-h^1(\lvert J\rvert)+g-1=  h^0(\lvert J\rvert)=\operatorname{rank} \mathcal W =N_\alpha+N_\beta.$$ 
 For $j=\alpha,\beta$ and $1\le i \le N_j$, 
 the line bundle $\mathcal \lvert J\rvert\otimes \mathcal O\left([q_{j,i}]-\sum_{n=1}^{N_\alpha} [q_{\alpha,n}]-\sum_{n=1}^{N_\beta} [q_{\beta,n}]\right)$ is a degree-one line bundle over a smooth genus-one curve $\lvert C \rvert$, so it has a unique section (up to real scaling) invariant under the anti-holomorphic involution.
 Then there exists a unique (up to positively scaling) section of $\lvert J\rvert$, denoted by $s_{j,i}$, such that 
\begin{itemize}
    \item $s_{j,i}$ has simple zeros at all the points in the set $\{q_{\alpha,1},\dots,q_{\alpha,N_\alpha},q_{\beta,1},\dots,q_{\beta,N_\beta}\}\setminus\{q_{j,i}\}$;
    \item $s_{j,i}$ is negative (with respect to the grading) at $p_j$.
\end{itemize}

Except for zeros in $\{q_{\alpha,1},\dots,q_{\alpha,N_\alpha},q_{\beta,1},\dots,q_{\beta,N_\beta}\}\setminus\{q_{j,i}\}$, the section $s_{j,i}$ has an additional zero; we denote this zero by $\check{q}_{j,i}$. According to item \ref{it lift compatible parity} in Proposition \ref{prop lifting}, the additional zero $\check{q}_{j,i}$ lies on $\partial_{j'}\Sigma$ for all $1\le i \le N_j$, where $\{j'\}\cup\{j\}=\{\alpha,\beta\}$, and therefore $s_{j,i}$ does not vanish at $q_{j,i}$. 

We denote by $\mathfrak o(p_\alpha,p_\beta,q_{\alpha,1},\dots,q_{\alpha,N_\alpha},q_{\beta,1},\dots,q_{\beta,N_\beta})$   the orientation of $\mathcal W$ given by
$$
s_{\alpha,1}\dots \wedge s_{\alpha,N_\alpha}\wedge s_{\beta,1}\wedge \dots \wedge s_{\beta,N_\beta}.
$$
By the same argument as in the genus-zero case, the orientation $\mathfrak o(p_\alpha,p_\beta,q_{\alpha,1},\dots,q_{\alpha,N_\alpha},q_{\beta,1},\dots,q_{\beta,N_\beta})$ is independent of the choice of $q_{\alpha,1},\dots,q_{\alpha,N_\alpha},q_{\beta,1},\dots,q_{\beta,N_\beta}$; we denote it by $\mathfrak o(p_\alpha,p_\beta,N_\alpha,N_\beta)$.

If we exchange the order of the two boundaries $\partial_1\Sigma$ and $\partial_2\Sigma$, then by definition, we have 
\begin{equation}\label{eq change of orientation of witten when exchange two bdries}
\mathfrak o(p_\alpha,p_\beta,N_\alpha,N_\beta)=(-1)^{N_\alpha N_\beta}\operatorname{Exch}^*\mathfrak o(p_\beta,p_\alpha,N_\beta,N_\alpha).
\end{equation}

Similar to \eqref{eq change of orientation of witten when cross a legal in g 0}, if there is one legal boundary marked point on the interval between $p_1$ and $p'_1$, then we have 
\begin{equation}\label{eq change of orientation of witten when cross a legal in g 1 1st bdry}
\mathfrak o(p_\alpha,p_\beta,N_\alpha,N_\beta)=(-1)^{N_\alpha}\mathfrak o(p'_\alpha,p_\beta,N_\alpha,N_\beta);
\end{equation} 
if there is one legal boundary marked point on the interval between $p_2$ and $p'_2$, then we have 
\begin{equation}\label{eq change of orientation of witten when cross a legal in g 1 2nd bdry}
\mathfrak o(p_\alpha,p_\beta,N_\alpha,N_\beta)=(-1)^{N_\beta}\mathfrak o(p_\alpha,p'_\beta,N_\alpha,N_\beta).
\end{equation}

\begin{lem}
    If $N_\alpha \ge 2$, we have  $\mathfrak o(p_\alpha,p_\beta,N_\alpha,N_\beta)=\mathfrak o(p_\alpha,p_\beta,N_\alpha-2,N_\beta+2)$.
\end{lem}
\begin{proof}
    We only need to check it for the fibre of $\mathcal W$ over a fixed cylinder $(C,\Sigma)$. According to \eqref{eq change of orientation of witten when cross a legal in g 1 1st bdry} and \eqref{eq change of orientation of witten when cross a legal in g 1 2nd bdry}, we only need to prove it for a specific choice of $p_\alpha,p_\beta$.

    For each cylinder $(C,\Sigma)$ in $\mathcal M^*$, we choice a coordinate system and represent $C$ as $\mathbb C=\mathbb R_x\times R_y$ quotiented by the lattice generated by $(1,0)$ and $(0,2\tau)$, and the fundamental domain of $\Sigma$ is $\{0\le x < 1\}\times \{0\le y < \tau\}$. We assume $(0,0)$ and $(0,\tau)$ are now boundary markings, otherwise we can shit the $x$-coordinator a little bit.

    We fix $p_1=(0,0)$ and $p_2=(0,\tau).$ If we choice $q_{\alpha,1},q_{\alpha,2},\dots,q_{\alpha,{N_\alpha}}$ and $q_{\beta,1},q_{\beta,2},\dots,q_{\beta,{N_\beta}}$ to define $\mathfrak o(p_\alpha,p_\beta,N_\alpha,N_\beta)$ as 
    $$
    s_{\alpha,1}\dots \wedge s_{\alpha,N_\alpha}\wedge s_{\beta,1}\wedge \dots \wedge s_{\beta,N_\beta},
    $$
    then we have $q_{\alpha,i}=(x_{\alpha,i},0)$ for each $0\le i \le N_\alpha$, where $0< x_{\alpha,1}<x_{\alpha,2}<\dots<x_{\alpha,N_\alpha}<1$; we also have $q_{\beta,i}=(x_{\beta,i},\tau)$ for each $1\le i \le N_\beta$, where $1> x_{\beta,1}>x_{\beta,2}>\dots>x_{\beta,N_\beta}>0$. We write $\check{q}_{\alpha,i}=(\check{x}_{\alpha,i},\tau)$ and $\check{q}_{\beta,i}=(\check{x}_{\beta,i},0)$.
    Since $s_{j,i}$ are sections of a fixed bundle $\lvert J \rvert$, the sum of (the $x$-coordinators) its zeros 
    \begin{equation}\label{eq sum of zero of x coordinator of a section}
        X:=\sum_{k=1}^{N_\alpha} x_{\alpha,k}+  \sum_{k=1}^{N_\beta} x_{\beta,k} -x_{j,i} + \check{x}_{j,i}
    \end{equation}
    is a constant modulo $\mathbb Z$ for a fixed coordinate system. We can assume $0< X < 1$, otherwise we can shift the $x$-coordinator in our coordinate system.

    We take  $q_{\alpha,1},q_{\alpha,2},\dots,q_{\alpha,{N_\alpha}}$ and $q_{\beta,1},q_{\beta,2},\dots,q_{\beta,{N_\beta}}$ in a way that 
    $$0< x_{\alpha,1}<x_{\alpha,2}<\dots<x_{\alpha,N_\alpha}<\frac{\operatorname{min}\{X,1-X\}}{2N}$$ 
    and 
    $$1> x_{\beta,1}>x_{\beta,2}>\dots>x_{\beta,N_\beta}>1-\frac{\operatorname{min}\{X,1-X\}}{2N},$$ 
    are require then do not touch the boundary markings. Then by \eqref{eq sum of zero of x coordinator of a section} we have 
    $$1> x_{\beta,1}>x_{\beta,2}>\dots>x_{\beta,N_\beta}> \check{x}_{\alpha,N_\alpha}> \check{x}_{\alpha,N_\alpha-1}>0.$$

    Now we choice $q'_{\alpha,1},q'_{\alpha,2},\dots,q'_{\alpha,{N_\alpha-1}}$ and $q'_{\beta,1},q'_{\beta,2},\dots,q'_{\beta,{N_\beta}},q'_{\beta,{N_\beta+1}},q'_{\beta,{N_\beta+2}}$ to define the orientation $\mathfrak o(p_\alpha,p_\beta,N_\alpha-2,N_\beta+2)$ as 
    $$
    s'_{\alpha,1}\dots \wedge s'_{\alpha,N_\alpha-2}\wedge s'_{\beta,1}\wedge \dots \wedge s'_{\beta,N_\beta+2}.
    $$
    We take $q'_{\alpha,i}=q_{\alpha,i}$ for $1\le i \le N_\alpha-2$, $q'_{\beta,i}=q_{\beta,i}$ for $1\le i \le N_\beta$,  $q'_{\beta,N_\beta+1}=\check{q}_{\alpha,N_\alpha}$ and $q'_{\beta,N_\beta+2}=\check{q}_{\alpha,N_\alpha-1}$. 
    Under this choice, the sections $s_{j,i}$ and $s'_{j,i}$ satisfy the following properties.
    \begin{itemize}
        \item For any $1\le i \le N_\alpha-2$, both $s_{\alpha,i}$ and $s'_{\alpha,i}$ vanish when evaluated at $\{q_{\alpha,1},\dots,q_{\alpha,N_\alpha},q_{\beta,1},\dots,q_{\beta,N_\beta}\}\setminus\{q_{\alpha,i}\}$. Moreover, $s_{\alpha,i}(q_{\alpha,i})$ and $s'_{\alpha,i}(q_{\alpha,i})$ have the same sign with respect to the grading: $s_{\alpha,i}$ and $s'_{\alpha,i}$ have the same number of zeros on the arc from $p_\alpha$ to $q_{\alpha,i}$, and both $s_{\alpha,i}(p_\alpha)$ and $s'_{\alpha,i}(p_\alpha)$ are negative by definition. Therefore, for each $0\le t \le 1$, the section
        $$
        s^t_{\alpha,i}:=(1-t)s_{\alpha,i}+ts'_{\alpha,i}
        $$
        vanishes when evaluated at $\{q_{\alpha,1},\dots,q_{\alpha,N_\alpha},q_{\beta,1},\dots,q_{\beta,N_\beta}\}\setminus\{q_{\alpha,i}\}$, and does not vanish when  evaluated at $q_{\alpha,i}$.
        
        \item For any $1\le i \le N_\beta$, similar to the above item, for each for each $0\le t \le 1$, the section
        $$
        s^t_{\beta,i}:=(1-t)s_{\beta,i}+ts'_{\beta,i}
        $$
        vanishes when evaluated at $\{q_{\alpha,1},\dots,q_{\alpha,N_\alpha},q_{\beta,1},\dots,q_{\beta,N_\beta}\}\setminus\{q_{\beta,i}\}$, and does not vanish when  evaluated at $q_{\beta,i}$.
        
        \item The zeros of $s_{\alpha,N_\alpha-1}$ and $s'_{\beta,N_\beta+1}$ are the same: at $q_{\alpha,i}$ for $1\le i \le N_\alpha-2$, $q_{\beta,i}$ for $1\le i \le N_\beta$, $q_{\alpha,N_\alpha}$ and $\check{q}_{1,N_\alpha-1}$. Thus $\frac{s_{\alpha,N_\alpha-1}}{s'_{\beta,N_\beta+1}}$ is a non-zero real number  $\frac{s_{\alpha,N_\alpha-1}(p_\beta)}{s'_{\beta,N_\beta+1}(p_\beta)}$. Similarly, $\frac{s_{\alpha,N_\alpha}}{s'_{\beta,N_\beta+2}}$ is a non-zero real number  $\frac{s_{\alpha,N_\alpha}(p_\beta)}{s'_{\beta,N_\beta+2}(p_\beta)}$. Note that $s'_{\beta,N_\beta+1}(p_\beta)$ and $s'_{\beta,N_\beta+2}(p_\beta)$ are always negative by definition. 
        Moreover, we can show that $s_{\alpha,N_\alpha}(p_\beta)$ and $s_{\alpha,N_\alpha-1}(p_\beta)$ always have the same sign. In fact, we can construct a homotopy $s^\nu$ between $s_{\alpha,N_\alpha}$ and $s_{\alpha,N_\alpha-1}$ with a homotopy parameter $0\le \nu \le 1$: the zeros of $s^\nu$ are at $q_{\alpha,i}$ for $1\le i \le N_\alpha-2$, $q_{\beta,i}$ for $1\le i \le N_\beta$, $\nu q_{\alpha,N_\alpha}+(1-\nu)q_{\alpha,N_\alpha-1}$ and $\nu\check{q}_{\alpha,N_\alpha-1}+(1-\nu)\check{q}_{\alpha,N_\alpha}$. 
        Then for any $0\le \nu \le 1$, $s^\nu$ does not vanish at $p_\beta$, which means $s_{\alpha,N_\alpha}(p_\beta)=s^{\nu=0}(p_\beta)$ and $s_{\alpha,N_\alpha-1}(p_\beta)=s^{\nu=1}(p_\beta)$ have the same sign. Therefore up to a positive scaling we have
        $$
        s_{\alpha,N_\alpha-1}\wedge s_{\alpha,N_\alpha}=s'_{\beta,N_\beta+1} \wedge s'_{\beta,N_\beta+2}.
        $$
    \end{itemize}
    Then the family of wedge products (over $0\le t \le 1$)
    $$
    s^t_{\alpha,1}\dots \wedge s^t_{\alpha,N_\alpha-2}\wedge s_{\alpha,N_\alpha-1}\wedge s_{\alpha,N_\alpha}\wedge s^t_{\beta,1}\wedge \dots \wedge s^t_{\beta,N_\beta},
    $$
    homotopes between 
    $$\mathfrak o(p_\alpha,p_\beta,N_\alpha,N_\beta)= s_{\alpha,1}\dots \wedge s_{\alpha,N_\alpha-2}\wedge s_{\alpha,N_\alpha-1}\wedge s_{\alpha,N_\alpha}\wedge s_{\beta,1}\wedge \dots \wedge s_{\beta,N_\beta}$$
    and 
    \begin{equation*}
    \begin{split}
        \mathfrak o(p_\alpha,p_\beta,N_\alpha-2,N_\beta+2)=& s'_{\alpha,1}\dots \wedge s'_{\alpha,N_\alpha-2}\wedge s'_{\beta,1}\wedge \dots \wedge s'_{\beta,N_\beta} \wedge s'_{\beta,N_\beta+1}\wedge s'_{\beta,N_\beta+2}\\
        =&s'_{\alpha,1}\dots \wedge s'_{\alpha,N_\alpha-2}\wedge s'_{\beta,1}\wedge \dots \wedge s'_{\beta,N_\beta} \wedge s_{\alpha,N_\alpha-1}\wedge s_{\alpha,N_\alpha}\\
        =&s'_{\alpha,1}\dots \wedge s'_{\alpha,N_\alpha-2} \wedge s_{\alpha,N_\alpha-1}\wedge s_{\alpha,N_\alpha}\wedge s'_{\beta,1}\wedge \dots \wedge s'_{\beta,N_\beta}.
    \end{split}
     \end{equation*}
    It remains to show that for any $0\le t \le 1$, the sections $s^t_{\alpha,1},\dots, s^t_{\alpha,N_\alpha-2}$, $s^t_{\beta,1}, \dots , s^t_{\beta,N_\beta}$, $ s_{\alpha,N_\alpha-1},$ $s_{\alpha,N_\alpha}$ are linearly independent. Assuming a linear combination of them are zero, \textit{i.e.,}
    $$
    \lambda^t_{\alpha,1}s^t_{\alpha,1}\dots + \lambda^t_{\alpha,N_\alpha-2}s^t_{\alpha,N_\alpha-2}+ \lambda_{\alpha,N_\alpha-1} s_{\alpha,N_\alpha-1}+ \lambda_{\alpha,N_\alpha} s_{\alpha,N_\alpha}+ \lambda^t_{\beta,1} s^t_{\beta,1}+ \dots + \lambda^t_{\beta,N_\beta} s^t_{\beta,N_\beta}=0,
    $$
    the evaluation at $q_{\alpha,i}$ forces $\lambda^t_{\alpha,i}=0$ for all $1\le i \le N_\alpha-2$ since $s^t_{\alpha,i}$ is the only one dose not vanish at $q_{\alpha,i}$. Similarly we have $\lambda^t_{\beta,i}=0$ for all $1\le i \le N_\beta$. Then the only terms left are 
    $$\lambda_{\alpha,N_\alpha-1} s_{\alpha,N_\alpha-1}+ \lambda_{\alpha,N_\alpha} s_{\alpha,N_\alpha}=0,$$ this forces $\lambda_{\alpha,N_\alpha-1}=\lambda_{\alpha,N_\alpha}=0$ since $s_{\alpha,N_\alpha-1}$ and $s_{\alpha,N_\alpha}$ have different zeros.
\end{proof}

Thus we write
$$
\mathfrak o(p_\alpha,p_\beta):=\mathfrak o(p_\alpha,p_\beta,N_\alpha,N_\beta)
$$
for any choice of $N_\alpha,N_\beta$ such that $N_\alpha+N_\beta=N=\operatorname{rank} \mathcal W$ and $N_j\equiv M_j+1, j=\alpha,\beta,$ where $M_j$ is the number of legal boundary markings on the boundary where $p_j$ lies.

In the case $\operatorname{rank} \mathcal W =0$, we can still construct the orientation $\mathfrak o (p_\alpha,p_\beta)$ (positive or negative) of the Witten bundle (away from the dimension-jump locus) for each choice of boundaries points $p_\alpha,p_\beta$ lying on different boundaries of $\Sigma$ which are not marked points. There are two possibilities.
\begin{enumerate}
    \item Both $M_\alpha$ and $M_\beta$ are odd. In this case we define $\mathfrak o (p_\alpha,p_\beta)$ to be positive for any choice of $p_\alpha,p_\beta$.
    \item Both $M_\alpha$ and $M_\beta$ are even. In this case, assuming $\bar{B}^\alpha$ and $\bar{B}^\beta$ the sets of boundary markings on two boundaries. We write $\bar{B}^\alpha\cup\{p_\alpha\}$ and $\bar{B}^\beta\cup\{p_\beta\}$  in their cyclic order as $\bar{B}^\alpha\cup\{p_\alpha\}=\overline{\{p_\alpha,b_{11},b_{12},\dots,b_{1k_1}\}}$ and $\bar{B}^\beta\cup\{p_\beta\}=\overline{\{p_\beta,b_{21},b_{22},\dots,b_{2k_1}\}}$. 
    We consider the strata $\mathcal M_{\Gamma}$ with a single non-separating boundary node where the arcs $p_\alpha$ and $p_\beta$ lying on meet each other, or in other words, $\Gamma$ is a graded $r$-spin graph with a single genus-zero open vertex $v$, and a non-separating boundary edge $e$ connecting $v$ to itself, where the set of boundary half-edges of $v$ in its cyclic order is $\bar{B}_v=\overline{\{b_{11},b_{12},\dots,b_{1k_1},h_2,b_{21},b_{22},\dots,b_{2k_1},h_1\}}$. Since $\Sigma$ is not in the dimension-jump locus, the edge $e$ is NS (see Example \ref{exa dim jump non separating}). Moreover, such $\Gamma$ is uniquely determined by the connected component of $\Mbar^{1/r,\alpha,\beta}_{{1,(\bar{B}^\alpha,\bar{B}^\beta),I}}$ containing $\Mbar_{\Gamma}$ since  $\operatorname{rank} \mathcal W =0$ implies all the marking have twist zero (see Example \ref{exa dim jump rank zero}). If $h_1$ is illegal and $h_2$ is legal, then we define $\mathfrak o (p_\alpha,p_\beta)$ to be positive on this connected component; if $h_1$ is legal and $h_2$ is illegal, then we define $\mathfrak o (p_\alpha,p_\beta)$ to be negative on this connected component.
    
\end{enumerate}

\begin{dfn}
    Let $\mathcal W_{1,(\bar{B}^\alpha,\bar{B}^\beta),I}$ be the Witten bundle over $\overline{\mathcal {QM}^*}^{1/r,\alpha,\beta}_{1,(\bar{B}^\alpha,\bar{B}^\beta),I}$, for each ordered pair $(x,y)$ of boundary markings such that $x\in B^\alpha,y\in B^\beta$, we define the orientation $\mathfrak o^{x,y}_{1,(\bar{B}^\alpha,\bar{B}^\beta),I}$ of $\mathcal W_{1,(\bar{B}^\alpha,\bar{B}^\beta),I}$ to be $\mathfrak o(p_x,p_y)$, where $p_x$ is a point on the arc from  $\sigma_2^{-1}(x)$ to $x$, and $p_y$ is a point on the arc from  $\sigma_2^{-1}(y)$ to $y$. In the case $B^\alpha$ (respectively $B^\beta$) is empty, we define $\mathfrak o^{x,y}_{1,(\bar{B}^\alpha,\bar{B}^\beta),I}$ to be $\mathfrak o(p_x,p_y)$, where $p_x$ (respectively $p_y$) is an arbitrary boundary point on the boundary  component $\partial_\alpha \Sigma$ labelled by $\alpha$ (respectively $\partial_\beta \Sigma$ labelled by $\beta$); the superscript $x,y$ is only for the sake of maintaining symbol consistency in this case again.
\end{dfn}

\begin{lem}\label{lem independent of choice of point g=1}
When all boundary markings in $B^\alpha$ are legal, the relative orientation $\tilde{\mathfrak o}^{x,y}_{1,(\bar B^\alpha, \bar B^\beta ),I}\otimes {\mathfrak o}^{x,y}_{1,(\bar B^\alpha, \bar B^\beta ),I}$ is independent of the choice of $x\in B^\alpha$; When all boundary markings in $B^\beta$ are legal, the relative orientation $\tilde{\mathfrak o}^{x,y}_{1,(\bar B^\alpha, \bar B^\beta ),I}\otimes {\mathfrak o}^{x,y}_{1,(\bar B^\alpha, \bar B^\beta ),I}$ is independent of the choice of $y\in B^\beta$.
\end{lem}

\begin{proof}
       Similar to Lemma \ref{lem independent of choice of point g=0}, the proof follows from \eqref{eq change of orientation of witten when cross a legal in g 1 1st bdry}, \eqref{eq change of orientation of witten when cross a legal in g 1 2nd bdry}, item \ref{item orientation of moduli when change starting point genus one} in Proposition \ref{prop orientation moduli g=1}, and \eqref{eq choice of N1 N2}.
\end{proof}

\begin{dfn}
    We define a relative orientation ${ o}^{x,y}_{1,(\bar B^\alpha, \bar B^\beta ),I}$ of  $\mathcal W_{1,(\bar B^\alpha, \bar B^\beta ),I}\to \overline {\mathcal {QM}^*}^{1/r,\alpha,\beta}_{1,(\bar B^\alpha, \bar B^\beta ),I}$ to be \begin{equation}    \label{eq definition canonical orientation g=1}  
    { o}^{x,y}_{1,(\bar B^\alpha, \bar B^\beta ),I}:=(-1)^{m^\delta(1,\{\bar B^\alpha, \bar B^\beta \},I)+m^{g=1}(1,\{\bar B^\alpha, \bar B^\beta \},I)}\tilde{\mathfrak o}^{x,y}_{1,(\bar B^\alpha, \bar B^\beta ),I}\otimes {\mathfrak o}^{x,y}_{1,(\bar B^\alpha, \bar B^\beta ),I},
    \end{equation} 
    where 
    $$m^\delta(1,\{\bar B^\alpha, \bar B^\beta \},I):=\frac{\operatorname{rank}_{\mathbb R}\mathcal W_{1,\{\bar B^\alpha, \bar B^\beta \},I}-\#\{b\in B^\alpha\sqcup B^\beta\colon b \text{ legal}\}}{2}$$
    and 
    $$
    m^{g=1}(1,\{\bar B^\alpha, \bar B^\beta \},I):=(\#\{b\in B^\alpha\colon b \text{ legal}\}+1)\cdot (\#\{b\in B^\beta\colon b \text{ legal}\}+1)-1
    $$
\end{dfn}

In the case $B^\alpha$ or $B^\beta$ is non-empty, since $\operatorname{For}^{1/r}_{\alpha,\beta}\colon \overline {\mathcal {QM}}^{1/r,\alpha,\beta}_{1,(\bar B^\alpha, \bar B^\beta ),I} \to \overline {\mathcal {QM}}^{1/r,}_{1,\{\bar B^\alpha, \bar B^\beta \},I}$ is an isomorphism, when there is no ambiguity, we will also denote by ${ o}^{x,y}_{1,(\bar B^\alpha, \bar B^\beta ),I}$ the relative orientation
$$(\operatorname{For}_{\alpha,\beta}^{1/r})_* { o}^{x,y}_{1,(\bar B^\alpha, \bar B^\beta ),I}:=\left((\operatorname{For}_{\alpha,\beta}^{1/r})^{-1}\right)^* { o}^{x,y}_{1,(\bar B^\alpha, \bar B^\beta ),I}$$
of the Witten bundle $\mathcal W_{1,\{\bar B^\alpha, \bar B^\beta \},I}\to \overline {\mathcal {QM}^*}^{1/r}_{1,\{\bar B^\alpha, \bar B^\beta \},I}$.
When all markings in $B^\alpha$ and $B^\beta$ are legal, according to \eqref{eq orientation moduli when exchange boundaries genus one}, \eqref{eq change of orientation of witten when exchange two bdries} and \eqref{eq choice of N1 N2}, we have 
\begin{equation}\label{eq exchange boundaries relative}
{ o}^{x,y}_{1,(\bar B^\alpha, \bar B^\beta ),I}=\operatorname{Exch}^*{ o}^{y,x}_{1,(\bar B^\beta, \bar B^\alpha ),I},
\end{equation}
which means the relative orientation $(\operatorname{For}_{\alpha,\beta}^{1/r})_* { o}^{x,y}_{1,(\bar B^\alpha, \bar B^\beta ),I}$
is independent of the choice of the order of the labels $\alpha,\beta$ as $\operatorname{Exch}\circ(\operatorname{For}_{\alpha,\beta}^{1/r})^{-1}=(\operatorname{For}_{\beta,\alpha}^{1/r})^{-1}$. It is also independent of the choice of $x\in B^\alpha$ and $y\in B^\beta$ according to Lemma \ref{lem independent of choice of point g=1}.

In the case $B^\alpha=B^\beta=\emptyset$, notice that by construction we have 
$$
{\mathfrak o}^{y,x}_{1,(\bar B^\beta, \bar B^\alpha ),I}=\Id_{\emptyset,\emptyset}^*{\mathfrak o}^{x,y}_{1,(\bar B^\alpha, \bar B^\beta ),I},
$$
together with \eqref{eq orientation of moduli without boundary points id} we have 
$$
{ o}^{y,x}_{1,(\bar B^\beta, \bar B^\alpha ),I}=\Id_{\emptyset,\emptyset}^*{o}^{x,y}_{1,(\bar B^\alpha, \bar B^\beta ),I},
$$
then \eqref{eq exchange boundaries relative} means the relative orientation   ${o}^{x,y}_{1,(\bar B^\alpha, \bar B^\beta ),I}$ on $\overline {\mathcal {QM}}^{1/r,\alpha,\beta}_{1,(\bar B^\alpha, \bar B^\beta ),I}$ is preserved by the automorphism $\Id_{\emptyset,\emptyset}\circ \operatorname{Exch}$ exchanging the two sheets of the double covering $\operatorname{For}_{\alpha,\beta}^{1/r} \colon \overline {\mathcal {QM}}^{1/r,\alpha,\beta}_{1,(\bar B^\alpha, \bar B^\beta ),I} \to \overline {\mathcal {QM}}^{1/r}_{1,\{\bar B^\alpha, \bar B^\beta \},I}$, therefore ${ o}^{x,y}_{1,(\bar B^\alpha, \bar B^\beta ),I}$ induces an relative orientation $(\operatorname{For}_{\alpha,\beta}^{1/r})_* { o}^{x,y}_{1,(\bar B^\alpha, \bar B^\beta ),I}$ of the Witten bundle on $ \overline {\mathcal {QM}}^{1/r}_{1,\{\bar B^\alpha, \bar B^\beta \},I}$, which is again independent of the choice of the order of the labels $\alpha,\beta$ by \eqref{eq exchange boundaries relative}.

\begin{dfn}
    Let $\bar B^1$ and $\bar B^2$ be two sets of legal boundary markings with cyclic order, and $I$ be a set of internal markings, we define the \textit{canonical relative orientation} ${ o}^{}_{1,\{\bar B^1, \bar B^2 \},I}$ of $\mathcal W_{1,\{\bar B^1, \bar B^2 \},I}\to \overline {\mathcal {QM}}^{1/r}_{1,\{\bar B^\alpha, \bar B^\beta \},I}$ as 
    $$
    { o}^{}_{1,\{\bar B^1, \bar B^2 \},I}:=(\operatorname{For}_{\alpha,\beta}^{1/r})_* { o}^{x,y}_{1,(\bar B^\alpha, \bar B^\beta ),I} 
    $$
    for any way to label $\{\bar B^1,\bar B^2\}$ as $\{\bar B^\alpha, \bar B^\beta\}$, and any choice of $x\in B^\alpha$, $y\in B^\beta$.

\end{dfn}

\subsubsection{Properties of orientation of Witten bundles at boundary strata}
Recall that Witten bundles decompose at the boundary strata (see Proposition \ref{prop decomposition g=0,1}). {The Witten bundle of $\Mbarstar^{1/r,\alpha,\beta}_{1,(\bar{B}^\alpha,\bar{B}^\beta),I}$ decomposes in exactly the same way.} We study the behaviour of the orientations constructed above under these decompositions.

\begin{prop}\label{prop: orientation open open}
      Let $\Gamma$ be a genus-zero stable graded $r$-spin graph with two open vertices connected by an NS boundary edge $e$, let $h_i$ denote the half-edges of $v_i$.
      Let $I_i$ be the sets of internal markings of $v_i$; we write $\bar{B}_1$, the boundary half-edges of $v_1$, in its cyclic order as $\bar{B}_1=\overline{\{b_{11},b_{12},\dots,b_{1k_1},h_1\}}$; we also write $\bar{B}_2$, the boundary half-edges of $v_2$, in its cyclic order as $\bar{B}_2=\overline{\{h_2,b_{21},b_{22},\dots,b_{2k_2}\}}$. Then the set of internal markings of $d_e \Gamma$ is $I=I_1\sqcup I_2$, and the set of boundary markings of $d_e \Gamma$ written in its cyclic order is $\bar{B}=\overline{\{b_{11},b_{12},\dots,b_{1k_1},b_{21},b_{22},\dots,b_{2k_2}\}}$. Under the identification 
      \begin{equation}\label{eq NS boundary node det decomp}
          \mu^*\det(\mathcal W_{0,\bar{B},I})\big|_{\Mbar_\Gamma} = q^*\left(\det(\mathcal W_{v_1})\boxtimes \det(\mathcal W_{v_2})\right)
      \end{equation}
       we have
        \begin{equation}\label{eq orientation bundle open open}
        \mu^*{\mathfrak{o}}^{b_{11}}_{0,\bar B,I}\big |_{\Mbar_\Gamma^{}}=
         q^*\left({\mathfrak{o}}^{b_{11}}_{0,\bar B_1,I_1}\boxtimes{\mathfrak{o}}^{h_2}_{0,\bar B_2,I_2}\right).
         \end{equation}
         We will omit $q^*$ and $\mu^*$ from the notation in \eqref{eq orientation bundle open open} for simplicity as $q$ and $\mu$ are isomorphisms in this case.
\end{prop}
\begin{proof}
    
    Let $N_1=\operatorname{rank} \mathcal W_{v_1}$ and $N_2=\operatorname{rank} \mathcal W_{v_2}$, then $\operatorname{rank} \mathcal W_{0,\bar{B},I}=N_1+N_2$. To prove \eqref{eq orientation bundle open open}, we define the orientation $\mathfrak{o}^{b_{11}}_{0,\bar B,I}$ with a specific choice of $p,q_1,q_2,\dots,q_{N_1+N_2}\colon \mathcal M_{0,\bar B,I}\to \mathcal C$, such that 
    \begin{itemize}
        \item $p$ lies on the arc from $b_{2k_2}$ to $b_{11}$;
        \item when approaching $\mathcal M_\Gamma$, the limits of $q_1,\dots,q_{N_1}$ lie on the irreducible component corresponding to $v_1$, and  the limits of $q_{N_1+1},\dots,q_{N_1+N_2}$ lie on the irreducible component corresponding to $v_2$.
    \end{itemize}  
    We denote the corresponding limits by \[q^1_1,\dots,q^1_{N_1}\colon \mathcal M^*_{v_1} \to \lvert \mathcal C_{v_1} \rvert^{\lvert \phi\rvert}~\text{ and  }q^2_{N_1+1},\dots,q^2_{N_1+N_2}\colon \mathcal M_{v_2} \to \lvert \mathcal C_{v_2} \rvert^{\lvert \phi\rvert}.\] We choose $p_1\colon \mathcal M^*_{v_1} \to \lvert \mathcal C_{v_1} \rvert^{\lvert \phi\rvert}$ on the arc from $h_1$ to $b_{11}$ and $p_2\colon \mathcal M_{v_2} \to \lvert \mathcal C_{v_2} \rvert^{\lvert \phi\rvert}$ on the arc from $b_{2k_2}$ to $h_2$, such that both $p_1,q^1_1,\dots,q^1_{N_1}$ and $p_2,q^2_{N_1+1},\dots,q^2_{N_1+N_2}$ are in the corresponding cyclic order. We denote by $(s_1,\dots,s_{N_1+N_2})$, $(s^1_1,\dots,s^1_{N_1})$ and $(s^2_{N_1+1},\dots,s^2_{N_1+N_2})$ the sections determined by $(p,q_1,q_2,\dots,q_{N_1+N_2})$, $(p_1,q^1_1,\dots,q^1_{N_1})$ and $(p_2,q^2_{N_1+1},\dots,q^2_{N_1+N_2})$ correspondingly.

    We denote by $\hat s_i$ the limit of $s_i$ when approaching $\mathcal M_\Gamma$,  perhaps after scaling by a positive function. For $1\le i \le N_1$, the restriction of $\hat s_i$ to the irreducible component corresponding to $v_2$ is the zero section since it vanishes at $N_2=\operatorname{rank} \mathcal W_{v_2}$ points $q^2_{N_1+1},\dots, q^2_{N_1+N_2}$;  the restriction of $\hat s_i$ to the irreducible component corresponding to $v_1$ vanishes at $q^1_1,q^1_2\dots, q^1_{i-1},q^1_{i+1},\dots,q^2_{N_1+N_2}$, by comparing the sign at $p_1$ it coincides with $s^1_i$. 
    Similarly for $N_1+1\le i \le N_1+N_2$, the restriction of $\hat s_i$ to the irreducible component corresponding to $v_1$ is the zero section, the restriction of $\hat s_i$ to the irreducible component corresponding to $v_2$ coincides with $s^2_i$.
    Then we have
    $$
    {\mathfrak{o}}^{b_{11}}_{0,\bar B,I}\big |_{\Mbar_\Gamma^{}}=
         \bigwedge_{i=1}^{N_1+N_2}\hat s_i  =\bigwedge_{i=1}^{N_1} s^1_i \boxtimes \bigwedge_{i=N_1}^{N_1+N_2} s^2_i={\mathfrak{o}}^{b_{11}}_{0,\bar B_1,I_1}\boxtimes{\mathfrak{o}}^{h_2}_{0,\bar B_2,I_2}.
    $$

\end{proof}

\begin{cor}\label{cor:orientation open open}
    With the same notation as in Proposition \ref{prop: orientation open open}, assuming every boundary marking in $B$ is legal, in the case $h_1$ is illegal and $h_2$ is legal, we have 
    \begin{equation}
        {{o}}^{}_{0,\bar B,I}\big |_{\Mbar_\Gamma^{}}=o_N\otimes\left(
         {{o}}^{b_{11}}_{0,\bar B_1,I_1}\boxtimes{{o}}^{}_{0,\bar B_2,I_2}\right),
         \end{equation}
         where $N$ is the outward normal with canonical orientation $o_N$.
\end{cor}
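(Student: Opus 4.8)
The plan is to combine the definition of the canonical relative orientation in \eqref{eq:definition canonical orientation} with the two decomposition results already at hand: the moduli-space orientation behavior in item (3) of Proposition~\ref{prop:orientation moduli}, namely \eqref{eq:orientation moduli open open}, and the Witten-bundle orientation behavior in Proposition~\ref{prop: orientation open open}, namely \eqref{eq:orientation bundle open open}. So I would write $o_{0,\bar B,I} = (-1)^{m^\delta(0,\bar B,I)}\,\tilde{\mathfrak o}^{b_{11}}_{0,\bar B,I}\otimes\mathfrak o^{b_{11}}_{0,\bar B,I}$, restrict both tensor factors to $\Mbar_\Gamma$, and substitute the two decomposition formulas. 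The result will be of the form $(\pm 1)\, o_N\otimes\big(o^{b_{11}}_{0,\bar B_1,I_1}\boxtimes o_{0,\bar B_2,I_2}\big)$, where the sign collects $(-1)^{(k_1-1)k_2}$ from \eqref{eq:orientation moduli open open}, the sign $(-1)^{m^\delta(0,\bar B,I)-m^\delta(0,\bar B_1,I_1)-m^\delta(0,\bar B_2,I_2)}$ from reassembling the three copies of the definition \eqref{eq:definition canonical orientation}, and a Koszul-type sign from commuting $o_N$ past the bundle-orientation factor of $v_1$ (since $o_N$ sits outside the moduli factor but the $\boxtimes$ in \eqref{eq:orientation bundle open open} has no normal direction).

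The key steps, in order, are: (i) expand the left side via \eqref{eq:definition canonical orientation}; (ii) apply \eqref{eq:orientation moduli open open} to $\tilde{\mathfrak o}^{b_{11}}_{0,\bar B,I}\big|_{\Mbar_\Gamma}$ and \eqref{eq:orientation bundle open open} to $\mathfrak o^{b_{11}}_{0,\bar B,I}\big|_{\Mbar_\Gamma}$; (iii) recombine the factors for $v_1$ into $o^{b_{11}}_{0,\bar B_1,I_1}$ and those for $v_2$ into $o_{0,\bar B_2,I_2}$, pulling the two sign prefactors $(-1)^{m^\delta(0,\bar B_1,I_1)}$ and $(-1)^{m^\delta(0,\bar B_2,I_2)}$ out of \eqref{eq:definition canonical orientation}; (iv) bookkeep the total sign and show it is $+1$. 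For step (iv) I would use the rank formula \eqref{eq rank of witten bundle}: since $e$ is NS, \eqref{eq NS boundary node det decomp} gives $\operatorname{rank}\mathcal W_{0,\bar B,I}=\operatorname{rank}\mathcal W_{v_1}+\operatorname{rank}\mathcal W_{v_2}$, and the legal-boundary counts satisfy $\#\{b\in B:\,b\text{ legal}\} = \#\{b\in B_1:\,b\text{ legal},\,b\ne h_1\}+\#\{b\in B_2:\,b\text{ legal},\,b\ne h_2\}$ with $h_1$ illegal and $h_2$ legal; these identities let me compute $m^\delta(0,\bar B,I)-m^\delta(0,\bar B_1,I_1)-m^\delta(0,\bar B_2,I_2)$ explicitly in terms of $N_1,N_2,k_1,k_2$, and I expect it to cancel the moduli sign $(-1)^{(k_1-1)k_2}$ together with the Koszul sign, leaving $+1$. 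The parity identity $\operatorname{rank}\mathcal W_{v_i}\equiv (\text{number of legal boundary half-edges of }v_i)-1 \bmod 2$, used already in the lemma preceding \eqref{eq:definition canonical orientation}, applied to each vertex separately, is what makes this cancellation work cleanly.

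The main obstacle I anticipate is the sign bookkeeping in step (iv): there are four sources of signs (the explicit $(-1)^{(k_1-1)k_2}$; the three $m^\delta$-prefactors; the Koszul sign from moving $o_N$ past a rank-$N_1$ orientation line; and the implicit choice, in \eqref{eq:orientation moduli open open} versus \eqref{eq:orientation bundle open open}, of where the normal direction $N$ is inserted in the tensor product), and getting their interaction exactly right requires care about orientation conventions for $\det(N)\otimes\det(T\Mbar)$ versus $\det(T\Mbar)\otimes\det(N)$. I would fix conventions once at the start — matching those in Proposition~\ref{prop:orientation moduli}(3) — and track the parity of $N_1$ throughout, since that is the only quantity the Koszul sign depends on. Everything else is a direct substitution, so once the sign is pinned down the corollary follows immediately.
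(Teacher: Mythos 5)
Your strategy matches the paper's proof exactly: expand via \eqref{eq:definition canonical orientation}, substitute \eqref{eq:orientation moduli open open} and \eqref{eq:orientation bundle open open}, and verify the signs cancel. However, your sign bookkeeping as described contains an error that would leave a stray sign if followed literally. You attribute the Koszul sign to ``commuting $o_N$ past the bundle-orientation factor of $v_1$,'' and assert that it therefore depends only on the parity of $N_1$; but $o_N$ never needs to move --- it sits at the far left throughout. The actual reordering is
\[
o_N\otimes\bigl(\tilde{\mathfrak o}^{b_{11}}_{0,\bar B_1,I_1}\boxtimes\tilde{\mathfrak o}^{h_2}_{0,\bar B_2,I_2}\bigr)\otimes\bigl(\mathfrak o^{b_{11}}_{0,\bar B_1,I_1}\boxtimes\mathfrak o^{h_2}_{0,\bar B_2,I_2}\bigr)
\;\longrightarrow\;
o_N\otimes\bigl(\tilde{\mathfrak o}^{b_{11}}_{0,\bar B_1,I_1}\otimes\mathfrak o^{b_{11}}_{0,\bar B_1,I_1}\bigr)\boxtimes\bigl(\tilde{\mathfrak o}^{h_2}_{0,\bar B_2,I_2}\otimes\mathfrak o^{h_2}_{0,\bar B_2,I_2}\bigr),
\]
which requires commuting $\tilde{\mathfrak o}^{h_2}_{0,\bar B_2,I_2}$ (a line of degree $\dim\Mbar_{v_2}$) past $\mathfrak o^{b_{11}}_{0,\bar B_1,I_1}$ (a line of degree $N_1$). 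The Koszul sign is thus $(-1)^{(\dim\Mbar_{v_2})\cdot N_1}$, not $(-1)^{N_1}$. Using the parity identity you correctly cite (from \eqref{eq:pairty all genus} and \eqref{eq rank of witten bundle}) applied to each vertex --- $N_1\equiv k_1-1$ and $\dim\Mbar_{v_2}\equiv k_2\pmod 2$ since $h_1$ is illegal and $h_2$ is legal --- this gives $(-1)^{k_2(k_1-1)}$, which exactly cancels the explicit $(-1)^{(k_1-1)k_2}$ from \eqref{eq:orientation moduli open open}. Meanwhile, the $m^\delta$ contribution vanishes \emph{identically} rather than merely modulo $2$: one computes $m^\delta(0,\bar B,I)=\tfrac{(N_1+N_2)+1-(k_1+k_2)}{2}$, $m^\delta(0,\bar B_1,I_1)=\tfrac{N_1+1-k_1}{2}$, $m^\delta(0,\bar B_2,I_2)=\tfrac{N_2-k_2}{2}$, and the first equals the sum of the last two. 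So the cancellation is cleaner than you anticipated, but your proposed route to it --- tracking only $N_1$ --- would not get you there.
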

\begin{proof}
    The corollary follows from \eqref{eq definition canonical orientation}, \eqref{eq orientation moduli open open} and \eqref{eq orientation bundle open open}.  The factors of the form $(-1)^{m^\delta}$ disappear since
    $$
        m^\delta(0,\bar{B},I)=m^\delta(0,\bar{B_1},I_1)+m^\delta(0,\bar{B_2},I_2).
    $$
    The factor $(-1)^{(k_1-1)k_2}$ in \eqref{eq orientation moduli open open} disappears when commuting $\tilde{\mathfrak{o}}^{h_2}_{0,\bar B_2,I_2}$ with ${\mathfrak{o}}^{b_{11}}_{0,\bar B_1,I_1}$.
\end{proof}

\begin{prop}\label{prop: orientation open open g=1 separating}
      Let $\Gamma$ be a genus-one stable graded $r$-spin graph an open genus-zero vertex $v_1$ and an open genus-one vertex $v_2$, connected by an NS separating boundary edge $e$, with half-edges $h_i$ on $v_i$. Let $I_i$ be the sets of internal markings of $v_i$; we write $\bar{B}_1$, the boundary half-edges of $v_1$, in its cyclic order as $\bar{B}_1=\overline{\{b_{11},b_{12},\dots,b_{1k_1},h_1\}}$; we also write $\bar{B}_{2}^\alpha$ and $\bar{B}_{2}^\beta$, the boundary half-edges of $v_2$ on each boundaries (where $h_1\in \bar{B}_{2}^1$), in their cyclic order as $\bar{B}_{2}^\alpha=\overline{\{h_2,b_{21}^1,b_{22}^1,\dots,b_{2k_2^1}^1\}}$ and $\bar{B}_{2}^\beta=\overline{\{b_{21}^2,b_{22}^2,\dots,b_{2k_2^2}^2\}}$. Then the set of internal markings of $d_e \Gamma$ is $I=I_1\sqcup I_2$, and the sets of boundary markings of $d_e \Gamma$ on each boundaries written in their cyclic order is $\bar{B}^\alpha=\overline{\{b_{11},b_{12},\dots,b_{1k_1},b_{21}^1,b_{22}^1,\dots,b_{2k_2^1}^1\}}$ and $\bar{B}^\beta=\bar{B}_{2}^\beta=\overline{\{b_{21}^2,b_{22}^2,\dots,b_{2k_2^2}^2\}}$. Under the identification 
      \begin{equation}\label{eq g=1 separating NS boundary node det decomp}
         \mu^* \det(\mathcal W_{1,(\bar{B}^\alpha,\bar{B}^\beta),I})\big|_{\oQMb_\Gamma} = q^*\left(\det(\mathcal W_{v_1})\boxtimes \det(\mathcal W_{v_2})\right);
      \end{equation}
       we have
        \begin{equation}\label{eq orientation genus one bundle separating open open}
        \mu^*  {\mathfrak{o}}^{b_{11},b_{21}^2}_{1,(\bar{B}^\alpha,\bar{B}^\beta),I}\big |_{\oQMb_\Gamma^{}}=
        q^*\left( {\mathfrak{o}}^{b_{11}}_{0,\bar B_1,I_1}\boxtimes{\mathfrak{o}}^{h_2, b_{21}^2}_{1,(\bar{B}_2^\alpha,\bar{B}_2^\beta),I_2}\right).
         \end{equation}
         We will omit $q^*$ and $\mu^*$ from the notation in \eqref{eq orientation genus one bundle separating open open} for simplicity as $q$ and $\mu$ are isomorphisms in this case.
\end{prop}

\begin{proof}

    We denote by $M_1$, $M_2^1$ and $M_2^2$ the number of legal half-edges in $\bar B_1$, $\bar B_2^\alpha \setminus \{h_2\}$ and $\bar B_2^\beta$.
    
    In the case $\operatorname{rank} \mathcal W_{v_2}\ge 1$, or in the case $\operatorname{rank} \mathcal W_{v_2}=0$ and $M_2^2$ is odd, the argument is similar to the proof of Proposition \ref{prop: orientation open open}. We sketch this argument and point out the different point.

    We take $N_1=\operatorname{rank} \mathcal W_{v_1}$, and take $N_2^1$ and $N_2^2$ such that 
    \begin{equation}\label{eq pairty in proof of separating orientation}
    N_2^1+N_2^2=\operatorname{rank} \mathcal W_{v_2}, \text{ } N_2^1\equiv M_2^1 \mod 2,  \text{ and } N_2^2\equiv M_2^2+1 \mod 2.
    \end{equation}
    Then by \eqref{eq legal mod 2} and \eqref{eq rank of witten bundle} we have $N_1+N_2^1 \equiv M_1+M_2^1+1 \mod 2$, notice that $M_1+M_2^1$ is the number of legal half-edges in $\bar B^\alpha$.
    
    We define the orientation ${\mathfrak{o}}^{b_{11},b_{21}^2}_{1,(\bar{B}^1,\bar{B}^2),I}$ with a specific choice of $$p_\alpha,p_\beta,q_1,q_2,\dots,q_{N_1+N_2^1+N_2^2}\colon \mathcal M^{1/r,\alpha,\beta}_{1,(\bar{B}^\alpha,\bar{B}^\beta),I}\cap \oQMb\to \mathcal C$$ such that 
    \begin{itemize}
        \item $p_\alpha$ lies on the arc from $b_{2\,k_2^1}$ to $b_{11}$, $p_\beta$ lies on the arc from $b_{2\,k_2^2}^2$ to $b_{21}^2$;
        \item $p_\alpha,q_1,q_2,\dots,q_{N_1+N_2^1}$ lie on the boundary corresponding to $\bar{B}^\alpha$ in cyclic order;
        \item $p_\beta,q_{N_1+N_2^1+1},q_{N_1+N_2^1+2},\dots,q_{N_1+N_2^1+N_2^2}$ lie on the boundary corresponding to $\bar{B}^\beta$ in cyclic order;
        \item when approaching $\mathcal M_\Gamma$, the limits of $q_1,\dots,q_{N_1}$ lie on the irreducible component corresponding to $v_1$, and  the limits of $q_{N_1+1},\dots,q_{N_1+N_2^1+N_2^2}$ lie on the irreducible component corresponding to $v_2$.
    \end{itemize}  
    With such a choice, we have the corresponding sections $s_{\alpha,1},\dots,s_{\alpha,N_1+N_2^1}$ and $s_{\beta,1},\dots,s_{\beta,N_2^2}$. As in the proof of Proposition \ref{prop: orientation open open}, when approaching $\mathcal M_\Gamma$,  we want to show that the limits of $s_{\alpha,1},\dots,s_{\alpha,N_1}$ are zero when restricted to the irreducible component corresponding to $v_2$, while when restricted to the irreducible component corresponding to $v_1$ their limits are exactly the sections we used to define ${\mathfrak{o}}^{b_{11}}_{0,\bar B_1,I_1}$; similarly we want to show that the limits of $s_{\alpha,N_1+1},\dots, s_{\alpha,N_1+N_2^1}$ and $s_{\beta,1},\dots,s_{\beta,N_2^2}$  are zero when restricted to the irreducible component corresponding to $v_1$, while when restricted to the irreducible component corresponding to $v_2$ their limits are exactly the sections we used to define
    ${\mathfrak{o}}^{h_2, b_{21}^2}_{1,(\bar{B}_2^\alpha,\bar{B}_2^\beta),I_2}.$ 

    The only different point from the proof of Proposition \ref{prop: orientation open open} is, for $1\le i \le N_1$, how to show the limit of $s_{\alpha,i}$ is zero when restricted to the irreducible component corresponding to $v_2$. Actually, the limit of $s_{\alpha,i}$,  when restricted to the irreducible component corresponding $v_2$, have a total of $N_2^1+N_2^2=\operatorname{rank} \mathcal W_{v_2}$ zeros on the boundaries; this is not enough to force such restriction to be constant zero as in the genus-zero case. However, notice that, among those $N_2^1+N_2^2$ zeros, $N_2^1$ of them lie on the boundary corresponding to $\bar{B}_2^\alpha$ and $N_2^2$ of them lie on the boundary corresponding to $\bar{B}_2^\beta$, which is impossible for a non-zero section because of \eqref{eq pairty all genus} and \eqref{eq pairty in proof of separating orientation}.

     In the case where $\operatorname{rank} \mathcal W_{v_2}= 0$ and $M_2^2$ is even, the above argument doesn't work since we can not find non-negative $N_2^2$ satisfying \eqref{eq pairty in proof of separating orientation}. However, we can always reduce the problem to the case where $\operatorname{rank} \mathcal W_{1,(\bar{B}^\alpha,\bar{B}^\beta),I}\le 1$ as follows.
     
     In fact, if $\operatorname{rank} \mathcal W_{1,(\bar{B}^1,\bar{B}^2),I}= \operatorname{rank} \mathcal W_{v_1}\ge 2$, we can always find a graph $\Lambda\in \partial v_1$ consist of $n+1$ genus-zero open vertices $v'_1$ and $u_1,\dots,u_n$, where the half-edge $h_1$ is attached to $v'_1$, each $u_i$ is connected to $v'_1$ via a NS boundary edge $f_i$ (we denote its half-edge on the $u_i$ by $h_{u_i}$ and the other half by $h'_{u_i}$), and $\operatorname{rank} \mathcal W_{v'_1}=1$. We denote by $\Xi\in \partial \Gamma$ the graph obtained by connecting $\Lambda$ and $v_2$ via the edge $e$ with half-edges $h_1$ and $h_2$; we also denote by $\Xi_1$ the genus-one component of $\Xi$ after detaching all the edges $f_i$, \textit{i.e.} a graph consisting of two vertices $v'_1$ and $v_2$ connected by the edge $e$.
     When we restrict our orientation problem to the Witten bundle over $\oQMb_\Xi\subset \oQMb_{\Gamma}$, we can apply \eqref{eq orientation bundle open open} repeatedly to the edges $f_i$ to write the orientation ${\mathfrak{o}}^{b_{11}}_{0,\bar B_1,I_1}\big \vert_{\Mbar_{\Lambda}}$ of $\mathcal W_{v_1}$ as 
     $$
     {\mathfrak{o}}^{b_{11}}_{0,\bar B_1,I_1}\big \vert_{\Mbar_{\Lambda}}=(-1)^{\epsilon_1}{\mathfrak{o}}^{h_{u_1}}_{0,\bar B_{u_1},I_{u_1}}\boxtimes {\mathfrak{o}}^{h_{u_2}}_{0,\bar B_{u_2},I_{u_2}}\boxtimes \dots \boxtimes {\mathfrak{o}}^{h_{u_i}}_{0,\bar B_{u_i},I_{u_i}}\boxtimes {\mathfrak{o}}^{b'_{11}}_{0,\bar B_{v'_1},I_{v'_1}},
     $$
     where $b'_{11}$ is either $b_{11}$ itself when $b_{11}$ is attached to $v'_1$, or the half-edge on $v'_1$ corresponding to the vertex $u_i$ which $b_{11}$ attached to,
     and the sign correction $(-1)^{\epsilon_1}$ determined by \eqref{eq change of orientation of witten when cross a legal in g 0} when changing the chosen point. 
     On the other hand, since $\operatorname{rank} \mathcal W_{v'_1}=1$, the Witten bundle over the genus-one part of $\operatorname{Detach}_{S}\Xi$ for any subset $S \subseteq \{f_i\}_{1\le i \le n}$ has positive rank, therefore we can apply \eqref{eq orientation genus one bundle separating open open} (already proven in the case where the Witten bundle has positive rank on the genus-one component) for the edges $f_i$ repeatedly to write 
     $${\mathfrak{o}}^{b_{11},b_{21}^2}_{1,(\bar{B}^\alpha,\bar{B}^\beta),I}\big |_{\oQMb_\Xi}=(-1)^{\epsilon_2}{\mathfrak{o}}^{h_{u_1}}_{0,\bar B_{u_1},I_{u_1}}\boxtimes {\mathfrak{o}}^{h_{u_2}}_{0,\bar B_{u_2},I_{u_2}}\boxtimes \dots \boxtimes {\mathfrak{o}}^{h_{u_i}}_{0,\bar B_{u_i},I_{u_i}}\boxtimes {\mathfrak{o}}^{b'_{11},b_{21}^2}_{1,(\bar{B}^\alpha_{d_e\Xi_1},\bar{B}^\beta_{d_e\Xi_1}=\bar{B}^\beta),I_{d_e\Xi_1}}\big |_{\oQMb_{\Xi_1}},$$ 
     where the sign correction $(-1)^{\epsilon_2}$ is determined by \eqref{eq change of orientation of witten when cross a legal in g 0} and \eqref{eq change of orientation of witten when cross a legal in g 1 1st bdry} when changing the chosen point. Assuming we can show $(-1)^{\epsilon_1}=(-1)^{\epsilon_2}$ and \eqref{eq orientation genus one bundle separating open open} in the reduce case $${\mathfrak{o}}^{b'_{11},b_{21}^2}_{1,(\bar{B}^\alpha_{d_e\Xi_1},\bar{B}^\beta_{d_e\Xi_1}=\bar{B}^\beta),I_{d_e\Xi_1}}\big |_{\oQMb_{\Xi_1}}={\mathfrak{o}}^{b'_{11}}_{0,\bar B_{v'_1},I_{v'_1}}\boxtimes {\mathfrak{o}}^{h_2, b_{21}^2}_{1,(\bar{B}_2^\alpha,\bar{B}_2^\beta),I_2}$$
     then \eqref{eq orientation genus one bundle separating open open} will follow.

    Now we compute the corrections $(-1)^{\epsilon_1}$. We start from the case $n=1$. We denote take $p$ to be a point on the boundary of $\Sigma_{u_1}\in \mathcal M_{u_1}$ such that there exist no markings on the arc from $h_{u_1}$ to $p$. Then by \eqref{eq orientation bundle open open} we have 
    $$
    {\mathfrak{o}}^{p}_{0,\bar B_1,I_1}\big \vert_{\Mbar_{\Lambda}}={\mathfrak{o}}^{p}_{0,\bar B_{u_1},I_{u_1}}\boxtimes {\mathfrak{o}}^{h'_{u_1}}_{0,\bar B_{v'_1},I_{v'_1}}.
    $$
    By \eqref{eq change of orientation of witten when cross a legal in g 0} we have
    $$
    {\mathfrak{o}}^{b_{11}}_{0,\bar B_1,I_1}=(-1)^{Z_{b_{11}\to p}\cdot(M_{B_1}+1)}{\mathfrak{o}}^{p}_{0,\bar B_1,I_1},
    $$
    where $Z_{b_{11}\to p}$ is the number of legal boundary markings on the arc from $b_{11}$ (included) to $p$, and $M_{B_1}$ is the number of legal half-edges in $\bar B_1$;
    we also have
     $$
    {\mathfrak{o}}^{b'_{11}}_{0,\bar B_{v'_1},I_{v'_1}}=(-1)^{Z_{b'_{11}\to h'_{u_1}}\cdot (M_{B_{v'_1}}+1)}{\mathfrak{o}}^{h'_{u_1}}_{0,\bar B_{v'_1},I_{v'_1}},
    $$
     where $Z_{b'_{11}\to h'_{u_1}}$ is the number of legal boundary markings on the arc from $b'_{11}$ (included) to $h'_{u_1}$ (not included), and $M_{B_{v'_1}}$ is the number of legal half-edges in $\bar B_{v'_1}$;
    and we also have  
    $$
    {\mathfrak{o}}^{h_{u_1}}_{0,\bar B_{u_1},I_{u_1}}= (-1)^{\alt(h_{u_1})}{\mathfrak{o}}^{p}_{0,\bar B_{u_1},I_{u_1}}
    $$
    where $ \alt(h_{u_1})=0$ if  $h_{u_1}$ is illegal, and $ \alt(h_{u_1})=1$ if  $h_{u_1}$ is legal. 
    Note that if $b'_{11}=b_{11}$ then $Z_{b_{11}\to p}=Z_{b'_{11}\to h'_{u_1}}$;  if $b'_{11}\ne b_{11}$ then $b'_{11} =h'_{u_1}$, which implies $Z_{b_{11}\to p}=Z_{b'_{11}\to h'_{u_1}}=0$. In any case we have 
    $$
    \epsilon_1\equiv Z_{b'_{11}\to h'_{u_1}} \cdot (M_{B_1}-M_{B_{v'_1}})+\alt(h_{u_1})\equiv Z_{b'_{11}\to h'_{u_1}} \cdot (M_{B_{u_1}}-\alt(h_{u_1}))+\alt(h_{u_1})\mod 2,
    $$
    where $M_{B_{u_1}}$ is the number of legal half-edges in $\bar B_{u_1}$. Therefore, $(-1)^{\epsilon_1}$ only depends on the information of boundary markings lying on the arc from $b'_{11}$ to $h'_{u_1}$ and the half-edges of $u_1$. Similarly, when $n>1$, assuming $h'_{u_1},h'_{u_2},\dots,h'_{u_n}$ are in cyclic order, then $(-1)^{\epsilon_1}$ only depends on the information of boundary markings lying on the arc from $b'_{11}$ to $h'_{u_n}$, and the half-edges of $u_1,\dots,u_n$.

    On the other hand, we can $(-1)^{\epsilon_2}$ using  \eqref{eq orientation genus one bundle separating open open}, \eqref{eq change of orientation of witten when cross a legal in g 0} and \eqref{eq change of orientation of witten when cross a legal in g 1 1st bdry}. $(-1)^{\epsilon_2}$ depends on the same information as $(-1)^{\epsilon_1}$ dose, in the exactly same way, which means $(-1)^{\epsilon_2}=(-1)^{\epsilon_2}$.
    
     To conclude the proof, we need to verify \eqref{eq orientation genus one bundle separating open open} in the case $\operatorname{rank} \mathcal W_{1,(\bar{B}^\alpha,\bar{B}^\beta),I}=0 $ or $1$. Both of them can be checked easily by definition.
\end{proof}

\begin{cor}\label{cor:orientation open open g=1}
    With the same notation as in Proposition \ref{prop: orientation open open g=1 separating}, assuming every boundary marking in $B^1$ and $B^2$ is legal, in the case $h_1$ illegal and $h_2$ legal we have 
    \begin{equation}\label{eq relative open open g=1 version 1}
        {{o}}_{1,\{\bar{B}^1,\bar{B}^2\},I}\big |_{\Mbar_\Gamma^{}}=o_N\otimes\left(
         {{o}}^{b_{11}}_{0,\bar B_1,I_1}\boxtimes{{o}}^{}_{1,\{\bar{B}_2^1,\bar{B}_2^2\},I_2}\right),
         \end{equation}
         where $N$ is the outward normal with canonical orientation $o_N$; in the case $h_2$ illegal and $h_1$ legal we have 
         \begin{equation}\label{eq relative open open g=1 version 2}
        {{o}}_{1,\{\bar{B}^1,\bar{B}^2\},I}\big |_{\Mbar_\Gamma^{}}=o_N\otimes\left( {{o}}^{b_{21}^1, b_{21}^2}_{1,(\bar{B}_2^1,\bar{B}_2^2),I_2} \boxtimes{{o}}^{}_{0,\bar B_1,I_1}
           \right).
         \end{equation}
\end{cor}
\begin{proof}
    The equation \eqref{eq relative open open g=1 version 1}  follows from \eqref{eq definition canonical orientation}, \eqref{eq definition canonical orientation g=1}, \eqref{eq orientation moduli open open g=1} and \eqref{eq orientation genus one bundle separating open open}.  The factors of the form $(-1)^{m^\delta}$ disappear since
    $$
        m^\delta(1,\{\bar{B}^1,\bar{B}^2\},I)=m^\delta(0,\bar{B_1},I_1)+m^\delta(1,\{\bar{B}_2^1,\bar{B}_2^2\},I_2).
    $$
    The factor $(-1)^{(k_1-1)k_2^1}$ in \eqref{eq orientation moduli open open g=1} disappears because 
    $$m^{g=1}(1,\{\bar{B}^1,\bar{B}^2\},I)-m^{g=1}(1,\{\bar{B}_2^1,\bar{B}_2^2\},I_2)=(k_2^2+1)(k_1-1),$$
    while commuting $\tilde{\mathfrak{o}}^{h_2, b_{21}^2}_{1,\{\bar{B}_2^1,\bar{B}_2^2\},I_2}$ with ${\mathfrak{o}}^{b_{11}}_{0,\bar B_1,I_1}$ gives $(-1)^{(k_1-1)(k_2^1+k_2^2+1)}$. The proof of equation \eqref{eq relative open open g=1 version 2} is similar.
\end{proof}

\begin{prop}\label{prop: orientation open open g=1 non-separating}
      Let $\Gamma$ be a genus-one stable graded $r$-spin graph consisting of an open genus-zero vertex $v$ and a NS non-separating boundary edge $e$ connecting $v$ to itself, with two half-edges $h_1$ and $h_2$. We assume $h_1$ is illegal and $h_2$ is legal. Let $I$ be the sets of internal markings of $v$; we write $\bar{B}_v$, the boundary half-edges of $v$, in its cyclic order as $\bar{B}_v=\overline{\{b_{11},b_{12},\dots,b_{1k_1},h_2,b_{21},b_{22},\dots,b_{2k_1},h_1\}}$. Then the set of internal markings of $d_e \Gamma$ is also $I$, and the sets of boundary markings of $d_e \Gamma$ on each boundaries written in their cyclic order is $\bar{B}^\alpha=\overline{\{b_{11},b_{12},\dots,b_{1k_1}\}}$ and $\bar{B}^\beta=\overline{\{b_{21},b_{22},\dots,b_{2k_1}\}}$. Under the identification
      \begin{equation}\label{eq g=1 non-separating NS boundary node det decomp}
          \mu^* \det(\mathcal W_{1,(\bar{B}^\alpha,\bar{B}^\beta),I})\big|_{\Mbar_\Gamma} = \det(\mathcal W_{v})
      \end{equation}
       we have
        \begin{equation}\label{eq orientation genus one bundle non-separating}
        \mu^* {\mathfrak{o}}^{b_{11},b_{21}}_{1,(\bar{B}^\alpha,\bar{B}^\beta),I}\big |_{\Mbar_\Gamma^{}}=
         q^*{\mathfrak{o}}^{b_{11}}_{0,\bar B_v,I}.
         \end{equation}
         We will omit $q^*$ and $\mu^*$ from the notation in \eqref{eq orientation genus one bundle non-separating} for simplicity as $q$ is an isomorphism in this case, and the (surjective) gluing morphism $\mu\circ q^{-1}\colon \Mbar_v\to \Mbar_{\Gamma}$ is an isomorphism when restricted as a morphism  $\mu\circ q^{-1}\colon \mathcal{M}_v\to \mathcal{M}_\Gamma$ between the dense open subspaces.
\end{prop}

\begin{proof}
    
    When $\operatorname{rank} \mathcal W_{1,(\bar{B}^\alpha,\bar{B}^\beta),I}=0$, the equation \eqref{eq orientation genus one bundle non-separating} is equivalent to the definition of orientation in this case. We assume $\operatorname{rank} \mathcal W_{1,(\bar{B}^\alpha,\bar{B}^\beta),I}\ge 1$.

    We denote by $M_1$ and $M_2$ the number of legal half-edges in $\bar B^\alpha$ and $\bar B^\beta$.
    We take integers $N_1$ and $N_2$ such that $N_1+N_2=\operatorname{rank} \mathcal W$, $N_1\equiv M_1 +1 \mod 2$, and $N_2 \equiv M_2 +1 \mod 2$.

    Unlike the proof of Proposition \ref{prop: orientation open open} and \ref{prop: orientation open open g=1 separating}, we start from a specific choice of $$p_\alpha,p_\beta,q_1,q_2,\dots,q_{N_1+N_2}\colon \mathcal M_\Gamma \to \mathcal C$$ such that 
    \begin{itemize}
        \item $p_\alpha$ lies on the arc from $h_1$ to $b_{11}$, $p_\beta$ lies on the arc from $h_2$ to $b_{21}$;
        \item $q_{1},q_{2},\dots,q_{N_1}$ lie (in order) on the arc from $p_\alpha$ to $h_2$;
        \item $q_{N_1+1},q_{N_1+2},\dots,q_{N_1+N_2}$ lie (in order) on the arc from $p_\beta$ to $h_1$;
    \end{itemize}  
    Then we can pull back them to $\mathcal M_v$ via the gluing morphism $\mathcal M_v\to \mathcal M_{\Gamma}$ and construct sections $s_1,\dots,s_{N_1+N_2}$ of $\lvert \mathcal J \rvert \to \mathcal M_v=\mathcal M_{0,\bar B_v,I}$ according to such choice of $p_\alpha,q_1,\dots,q_{N_1+N_2}$, and write 
    $${\mathfrak{o}}^{b_{11}}_{0,\bar B_v,I}=s_1\wedge s_2 \wedge \dots \wedge s_{N_1+N_2}.$$

    We extend the above choice of $p_\alpha,p_\beta,q_1,q_2,\dots,q_{N_1+N_2}$ to a neighbourhood of $\mathcal{M}_\Gamma$ in $\mathcal M_{1,\{\bar{B}^1,\bar{B}^2\},I}$, then we get sections $s_{1,1},\dots,s_{1,N_1}$ and $s_{2,1},\dots, s_{2,N_2}$ of $\lvert \mathcal J \rvert \to \mathcal M_{1,\{\bar{B}^1,\bar{B}^2\},I}$ in this neighbourhood accordingly, and we can write
    $$
    {\mathfrak{o}}^{b_{11},b_{21}}_{1,(\bar{B}^\alpha,\bar{B}^\beta),I}=s_{1,1}\wedge\dots \wedge s_{1,N_1} \wedge s_{2,1}\wedge \dots \wedge s_{2, N_2}.
    $$

    Note that for each $1\le i \le N_1$, both $s_i$ and $s_{1,i}$ vanish at $\{q_1,q_2,\dots,q_{N_1+N_2}\}\setminus \{q_i\}$, and evaluate negatively at $p_\alpha$, thus $s_{1,i}$ are extensions of (the gluing of) $s_i$ in a neighbourhood of $\mathcal M_\Gamma$ for $1\le i \le N_1$.

    To complete the proof by showing $s_{2,i}$ are extensions of $s_{N_1+i}$ in a neighbourhood of $\mathcal M_\Gamma$ for $1\le i \le N_2$, we need to show $s_{N_1+i}$ evaluate negatively at $p_\beta$. Actually, we know that $s_{N_1+i}$ evaluate negatively at $p_\alpha$; on the arc form $p_\alpha$ to $p_\beta$ there are $M_1+1$ legal boundary markings (which are legal markings in $\bar B^1$ with an additional $h_2$) where the grading alters, and $N_1$ zeros of $s_{N_1+i}$, which means $s_{N_1+i}(p_\alpha)$ and $s_{N_1+i}(p_\beta)$ have the same sign with respect to the grading since $M_1+1+N_1\equiv 0 \mod 2$. 
\end{proof}

\begin{cor}\label{cor:orientation relative nonseperating}
    With the same notation as in Proposition \ref{prop: orientation open open g=1 non-separating}, assuming every boundary marking in $B^1$ and $B^2$ is legal, we have 
    \begin{equation}\label{eq relative orientation genus one non-separating}
         {{o}}^{}_{1,\{\bar{B}^\alpha,\bar{B}^\beta\},I}\big |_{\Mbar_\Gamma^{}}=
         {{o}}^{b_{11}}_{0,\bar B_v,I}.
         \end{equation}
         where $N$ is the outward normal with canonical orientation $o_N$.
\end{cor}
\begin{proof}
    The equation \eqref{eq relative orientation genus one non-separating}  follows from \eqref{eq definition canonical orientation}, \eqref{eq definition canonical orientation g=1}, \eqref{eq orientation genus one moduli non-separating} and \eqref{eq orientation genus one bundle non-separating}.  The factors of the form $(-1)^{m^\delta}$ disappear since
    $$
        m^\delta(1,\{\bar{B}^\alpha,\bar{B}^\beta\},I)=m^\delta(0,\bar{B}_v,I).
    $$
    The sign in \eqref{eq orientation genus one moduli non-separating} is the same as
    $m^{g=1}(1,\{\bar{B}^\alpha,\bar{B}^\beta\},I)$.
\end{proof}

\begin{prop}\label{prop:orientation closed open}
    Let $\Gamma$ be a stable graded $r$-spin graph with two vertices, an open vertex $v^o$ and a closed vertex $v^c$, connected by an edge $e$.  We denote by $I,I^o$ and $I^c$ the sets of internal half-edges of $d_e\Gamma,v^o$, and $v^c$, by $B$ the common set of boundary half-edges of $d_e\Gamma$ and $v^o$. We have \begin{equation}\label{eq det decomp open open}
        \mu^*\det(\mathcal W_{0,\bar{B},I})\big|_{\Mbar_\Gamma} =  q^*\left(\det(\mathcal W_{v^o})\boxtimes \det(\mathcal W_{v^c})\right).
    \end{equation}
    Moreover, for any boundary marking $b\in B$, we have 
        \begin{equation}\label{eq orientation bundle closed open}
        \mu^*{\mathfrak{o}}^b_{0,\bar B,I}|_{\Mbar_\Gamma} = (-1)^{m^c(I^c)}q^*\left({\mathfrak{o}}^b_{0,\bar B,I^o}\boxtimes {\mathfrak{o}}_{v^c}\right),
        \end{equation}
       where  
        $$
        m^c(I^c):=\operatorname{rank}_{\mathbb C} \mathcal W_{v^c}=\frac{1}{2}\operatorname{rank}_{\mathbb R} \mathcal W_{v^c},
        $$
        and ${\mathfrak{o}}_{v^c}$ is the canonical complex orientations. We will omit $q^*$ and $\mu^*$ from the notation in \eqref{eq orientation bundle closed open} for simplicity as $\mu$ is an isomorphism in this case, and even though the degree-one morphism $q$ may not induce an isomorphism on automorphism groups in this case, these actions preserve orientation.

\end{prop}
\begin{proof}

     Now we prove \eqref{eq orientation bundle closed open}. By the exact same argument as in the proof of Lemma 5.14 in \cite{BCT1}, we can reduce the problem to the case $I=I^c=\{a_1,a_2\}$ and $B=\{b\}$, where $r\le a_1+a_2\le 2r-1$ and $2a_1+2a_2+b=3r-2$. We prove the proposition by writing down, in these cases, the explicit sections of $\mathcal W\to \mathcal M^{1/r}_{0,\bar{B},I}$ in the upper half-plane model.

    We denote by $x_b,z_1,z_2$ the marked points that correspond to $b,a_1,a_2$, and let $p,q_1,q_2$ be the chosen boundary points to define $\mathfrak o(p,q_1,q_2)=s_1\wedge s_2$. We assume $p<x_b<q_1<q_2$ without loss of generality. By setting $q_b=x_b$ for convenience, we define global $\phi$-invariant sections of $\omega_{\lvert C\rvert}$ by 
    $$
    \xi_{ij}:=\frac{(q_j-q_i)d\omega}{(\omega-q_i)(\omega-q_j)}, i,j\in\{1,2,b\},i\ne j
    $$
    and
    $$
    \xi_i:=\frac{\sqrt{-1}(\bar z_i -z_i)d\omega}{(\omega-z_i)(\omega-\bar z_i)}, i\in \{1,2\},
    $$
    where $\sqrt{-1}$ is the root in the upper half-plane. They are well-defined since the above formulae are invariant under the $PSL(2,R)$-action.
    
    We define
    $$
    \Xi_1:=\xi_1^{a_1} \xi_2^{a_2} \xi_{b1}^{\frac{b+r}{2}} \xi_{b2}^{\frac{b-r}{2}}\xi_{12}^{-\frac{b+r}{2}},
    $$
    which is a global section (over $\mathcal M^{1/r}_{0,\bar{B},I}$) of 
    $$
    \omega^{\otimes r-1}_{\lvert C\rvert}\otimes \mathcal O\left(a_1[z_1]+a_2[z_2]+a_1[\bar z_1]+a_2[\bar z_2]+b[x_b] \right)\cong \lvert J\rvert^{\otimes r}.
    $$ 
    Note that $\Xi_1$ only has one zero at $q_2$ with order $r$, thus $s_1$ is an $r$-th root of $\Xi_1$. Similarly, $s_2$ is an $r$-th root of 
    $$
    \Xi_2:=\xi_1^{a_1} \xi_2^{a_2} \xi_{b1}^{\frac{b-r}{2}} \xi_{b2}^{\frac{b+r}{2}}\xi_{12}^{-\frac{b+r}{2}}.
    $$
     The stratum $\overline{\mathcal M}_\Gamma$ consists of a single point where $z_1=z_2$. We denote by $\hat s_1,\hat s_2$ the limit of $s_1,s_2$ at this point. Since $\operatorname{rank} W_{v^o}=0$, by an abuse of notation, we also denote by $\hat s_1,\hat s_2$ their restriction to the closed irreducible component. Note that $ {\mathfrak{o}}^b_{0,\bar B,I^o}$ is always positive by definition in the $\operatorname{rank} \mathcal W=0$ case, we need to compare $\hat s_1\wedge\hat s_2$ with the complex orientation of the closed Witten bundle $\mathcal W_{v^c}$. Since $\operatorname{rank_{\mathbb C}} \mathcal W_{v^c}=1$, the ratio $\frac{\hat s_2}{\hat s_1}$ is a well-defined complex number. Moreover, the orientation $\hat s_1\wedge\hat s_2$ is opposite to the complex orientation if and only if $\frac{\hat s_2}{\hat s_1}$ lies in the lower half-plane.  By construction we have 
     $$
        \frac{\hat s_2}{\hat s_1}=\lim_{z_2\to z_1}\frac{s_2}{s_1}\Big\vert_{\omega=z_1}.
     $$
     Since $\frac{s_2}{s_1}$ is an $r$-th root of $\frac{\Xi_2}{\Xi_1}=\left(\frac{q_2-x_b}{q_1-x_b}\cdot\frac{\omega-q_1}{\omega-q_2}\right)^r$ which is positive on $p$, so we have
     $$
      \frac{s_2}{s_1}=\frac{q_2-x_b}{q_1-x_b}\cdot\frac{\omega-q_1}{\omega-q_2}.
     $$
     For all $\omega$ in the upper half-plane, $\frac{s_2}{s_1}$ lies in the  lower half-plane, so does $\frac{\hat s_2}{\hat s_1}$. Therefore we have
     $$
     {\mathfrak{o}}^b_{0,\bar B,I}|_{\Mbar_\Gamma} =\hat s_1\wedge\hat s_2=-{\mathfrak{o}}_{v^c} =(-1)^{m^c(I^c)}{\mathfrak{o}}^b_{0,\bar B,I^o}\boxtimes {\mathfrak{o}}_{v^c}.
     $$
     
\end{proof} 

\begin{cor}\label{cor:orientation closed open}
     With the same notation as in Proposition \ref{prop:orientation closed open}, when all boundary markings in $B$ are legal, we have 
     \begin{equation}\label{eq orientation relative closed open}
        {{o}}_{0,\bar B,I}|_{\Mbar_\Gamma} = o_N\otimes ({{o}}_{0,\bar B,I^o}\boxtimes {{o}}_{v^c}),
        \end{equation}
        where $o_N$ and ${{o}}_{v^c}$ are the canonical complex orientations.
\end{cor}
\begin{proof}
    The corollary follows from the combination of \eqref{eq definition canonical orientation}, \eqref{eq orientation moduli closed open}, and \eqref{eq orientation bundle closed open}. The factors $(-1)^{m^\delta}$ and $(-1)^{m^c}$ disappear because
    $$
    m^\delta(0,\bar B, I)=m^c(I^c)+ m^\delta(0,\bar B, I^o).
    $$
    No signs appears when commuting $ \tilde{\mathfrak{o}}_{v^c}$ with ${\mathfrak{o}}^b_{0,\bar B,I^o}$ because the dimension of $\mathcal M_{v^c}$ is even.
\end{proof}

\begin{prop}\label{prop:orientation bundle closed open genus one}
    Let $\Gamma$ be a genus-one  stable graded $r$-spin graph with two vertices, an open genus-one vertex $v^o$ and a closed genus-zero vertex $v^c$, connected by an separating internal edge $e$.  We denote by $I,I^o$ and $I^c$ the sets of internal half-edges of $d_e\Gamma,v^o$, and $v^c$, by $\bar B^\alpha$ and $\bar B^\beta$ the common set of boundary half-edges of $d_e\Gamma$ and $v^o$ on each boundary of the cylinder. Under the identification \begin{equation}\label{eq det decomp open close genus one}
        \mu^*\det(\mathcal W_{1,(\bar{B}^\alpha,\bar{B}^\beta),I})\big|_{\oQMb_\Gamma} =  q^*\left(\det(\mathcal W_{v^o})\boxtimes \det(\mathcal W_{v^c})\right),
    \end{equation}
    for any boundary markings $x\in \bar B_1$ and $y\in \bar B_2$, we have 
        \begin{equation}\label{eq orientation bundle closed open genus one}
        \mu^*{\mathfrak{o}}^{x,y}_{1,(\bar{B}^\alpha,\bar{B}^\beta),I}|_{\oQMb_\Gamma} = (-1)^{m^c(I^c)}q^*\left({\mathfrak{o}}^{x,y}_{1,(\bar{B}^\alpha,\bar{B}^\beta),I^o}\boxtimes {\mathfrak{o}}_{v^c}\right),
        \end{equation}
       where  
        $$
        m^c(I^c):=\operatorname{rank}_{\mathbb C} \mathcal W_{v^c}=\frac{1}{2}\operatorname{rank}_{\mathbb R} \mathcal W_{v^c},
        $$
        and ${\mathfrak{o}}_{v^c}$ is the canonical complex orientations. We will omit $q^*$ and $\mu^*$ from the notation in \eqref{eq orientation bundle closed open genus one} for simplicity as $\mu$ is an isomorphism in this case, and even though the degree-one morphism $q$ may not  induce an isomorphism on automorphism groups in this case, these actions preserve orientation.

\end{prop}

\begin{proof}
    We only need to check \eqref{eq orientation bundle closed open genus one} for the fibre of $\mathcal W$ over a specific point in $\oQMb_\Gamma$ and a specific choice of $x,y$. We take a point in     $\Mbar_{\Xi}\subset \oQMb_{\Gamma}$,     where $\Xi\in \partial \Gamma$ consist of two vertices: a closed genus-zero vertex $v^c$ (which is the same as the closed vertex in $\Gamma$) and an open genus-zero vertex $v^o_0$, where $v^c$ and $v^o_0$ are connected by an internal edge $e^I$, while $v^o_0$ is also connected to itself via a NS non-separating  boundary edge $e^B$. Such $\Xi$ always exists when $\Gamma$ in not dimension-jump.

    We denote by $h_1,h_2$ the half-edges of $e^B$. Similar to the notation in Proposition \ref{prop: orientation open open g=1 non-separating}, we write $\bar{B}^\alpha=\overline{\{b_{11},b_{12},\dots,b_{1k_1}\}}$ and $\bar{B}^\beta=\overline{\{b_{21},b_{22},\dots,b_{2k_1}\}}$, and write $\bar{B}_{v^o}$, the set of boundary half-edges of $v^o$, in its cyclic order as $\bar{B}_{v^o}=\overline{\{b_{11},b_{12},\dots,b_{1k_1},h_2,b_{21},b_{22},\dots,b_{2k_1},h_1\}}$. Note that $d_{e^I}\Xi$ consist of a single genus-zero open vertex, which we denote by ${v^o}'$, and the non-separating NS boundary edge $e^B$ connect ${v^o}'$ to itself; moreover, the set of boundary half-edges of ${v^o}'$ is also $\bar{B}_{v^o}$, and the set of internal half-edges of ${v^o}'$ is $I$.

    On one hand we have
    \begin{equation*}
    \begin{split}
        {\mathfrak{o}}^{b_{11},b_{21}}_{1,(\bar{B}^\alpha,\bar{B}^\beta),I}|_{\Mbar_\Xi} =&\left({\mathfrak{o}}^{b_{11},b_{21}}_{1,(\bar{B}^\alpha,\bar{B}^\beta),I}|_{\oQMb_\Gamma}\right)\big\vert_{\Mbar_{\Xi}},
    \end{split}
    \end{equation*}
   On the other hand, \eqref{eq orientation bundle closed open} and \eqref{eq orientation genus one bundle non-separating} implies 
    \begin{equation*}
    \begin{split}
        {\mathfrak{o}}^{b_{11},b_{21}}_{1,(\bar{B}^\alpha,\bar{B}^\beta),I}|_{\Mbar_\Xi} =&\left({\mathfrak{o}}^{b_{11},b_{21}}_{1,(\bar{B}^\alpha,\bar{B}^\beta),I}|_{\Mbar_{d_{e^I}\Xi}}\right)\big\vert_{\Mbar_{\Xi}}\\
        =&  \left( {\mathfrak{o}}^{b_{11}}_{0,\bar B_v,I} \right)\big \vert_{\Mbar_{\operatorname{Detach}_{e^B} \Xi}}
        \\=&(-1)^{m^c(I^c)}{\mathfrak{o}}^{b_{11}}_{0,\bar{B}_v,I^o}\boxtimes {\mathfrak{o}}_{v^c}.
    \end{split}
    \end{equation*}
    Thus  \eqref{eq orientation bundle closed open genus one} hold when restricted to $\Mbar_\Xi$, hence on entire the $\oQMb_\Gamma$.

\end{proof}

\begin{cor}\label{cor:orientation closed open g=1}
     With the same notation as in Proposition \ref{prop:orientation bundle closed open genus one}, when all boundary markings in $B^\alpha$ and $B^\beta$ are legal, we have 
     \begin{equation}\label{eq orientation relative closed open g=1}
        {\mathfrak{o}}_{1,\{\bar{B}^\alpha,\bar{B}^\beta\},I}|_{\oQMb_\Gamma} = {\mathfrak{o}}_{1,\{\bar{B}^\alpha,\bar{B}^\beta\},I^o}\boxtimes {\mathfrak{o}}_{v^c},
        \end{equation}
        where $o_N$ and ${{o}}_{v^c}$ are the canonical complex orientations.
\end{cor}
\begin{proof}
    The corollary follows from the combination of \eqref{eq definition canonical orientation g=1}, \eqref{eq orientation moduli closed open g=1}, and \eqref{eq orientation bundle closed open genus one}. Note that we have
    $$
    m^\delta(1,\{\bar B^\alpha, \bar B^\beta\}, I)=m^c(I^c)+ m^\delta(1,\{\bar B^\alpha, \bar B^\beta \}, I^o),
    $$
    and
    $$
    m^{g=1}(1,\{\bar B^\alpha, \bar B^\beta\}, I)=m^{g=1}(1,\{\bar B^\alpha, \bar B^\beta \}, I^o).
    $$
    No signs appears when commuting $ \tilde{\mathfrak{o}}_{v^c}$ with ${\mathfrak{o}}^{x,y}_{1,(\bar{B}^\alpha,\bar{B}^\beta),I^o}$ because the dimension of $\mathcal M_{v^c}$ is even.
\end{proof}

\section{Point insertion}\label{sec:point_ins}

In this section, we introduce the point insertion technique.

\subsection{Motivation and examples}
The genus-zero open $r$-spin theory studies the intersection theory over the moduli of $r$-spin disks $\Mbar^{1/r}_{0,B,I},$ just as the closed $r$-spin theory considers the intersection theory over the moduli spaces of $r$-spin curves. However, since $\Mbar^{1/r}_{0,B,I}$ is an orbifold with corners, the intersection theory is not well-defined. The grading structure allows us to deal in different ways with the problems that occur due to the presence of the boundaries.
As we shall see in the sequel \cite{TZ2}, certain boundary components are treated via some positivity phenomenon, which relies on the grading. The procedure of \textit{point insertion}, which also depends on the grading, is aimed to deal with the remaining boundaries: we can glue another moduli to $\Mbar^{1/r}_{0,B,I}$ along boundaries that cannot be dealt with via positivity to cancel out these boundaries. 
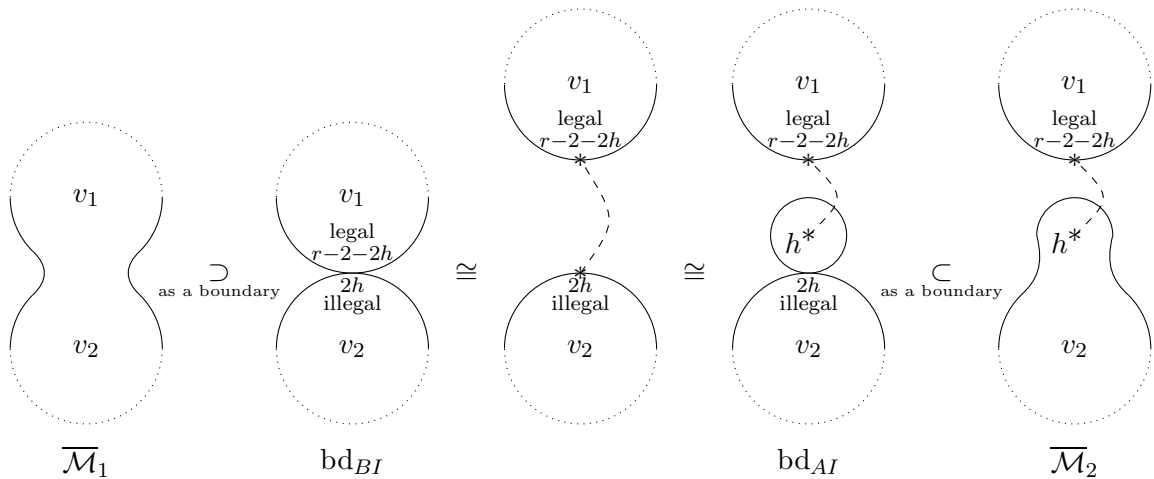
\begin{figure}[h]
    \centering
    \begin{tikzpicture}
        \draw (0,0) arc (0:180:1);
        \draw[dotted] (0,0) arc (0:-180:1);
        \node at (-1,1) {$*$};
        \node at (-1,0.7) {$\substack{2h\\ \text{illegal}}$};
        \node at (-1,0) {$v_2$};
                
        \draw[dotted] (0,3.5) arc (0:180:1);
        \draw (0,3.5) arc (0:-180:1);
        \node at (-1,2.5) {$*$};
        \node at (-1,2.9) {$\substack{\text{legal}\\r-2-2h}$};
        \node at (-1,3.5) {$v_1$};

        \draw[dashed] (-1,1) .. controls (-0.5,1.7) .. (-1,2.5);

        \draw (-3,0) arc (0:180:1);
        \draw[dotted] (-3,0) arc (0:-180:1);
  
        \node at (-4,0.7) {$\substack{2h\\ \text{illegal}}$};
        \node at (-4,0) {$v_2$};
        \node at (-4,-1.5) {$\text{bd}_{BI}$};

        \draw[dotted] (-3,2) arc (0:180:1);
        \draw (-3,2) arc (0:-180:1);
  
        \node at (-4,1.4) {$\substack{\text{legal}\\r-2-2h}$};
        \node at (-4,2) {$v_1$};

        \node at (-2.5,1) {$\cong$};

        \draw (3,0) arc (0:180:1);
        \draw[dotted] (3,0) arc (0:-180:1);
        \node at (2,1.5) {$*$};
        \node at (1.8,1.4) {$h$};
        \node at (2,0.7) {$\substack{2h\\ \text{illegal}}$};
        \node at (2,0) {$v_2$};
        \node at (2,-1.5) {$\text{bd}_{AI}$};
                
        \draw[dotted] (3,3.5) arc (0:180:1);
        \draw (3,3.5) arc (0:-180:1);
        \node at (2,2.5) {$*$};
        \node at (2,2.9) {$\substack{\text{legal}\\r-2-2h}$};
        \node at (2,3.5) {$v_1$};
        
        \draw (2.5,1.5) arc (0:360:0.5);
        \draw[dashed] (2,1.5) .. controls (2.5,2) .. (2,2.5);

        \node at (0.5,1) {$\cong$};

         \draw (-6.5,0) arc (0:45:1);
         \draw (-8.5,0) arc (180:135:1);
        \draw[dotted] (-6.5,0) arc (0:-180:1);
    
        \node at (-7.5,0) {$v_2$};
        \node at (-7.5,-1.5) {$\Mbar_1$};

        \draw[dotted] (-6.5,2) arc (0:180:1);
        \draw (-6.5,2) arc (0:-45:1);
        \draw (-8.5,2) arc (-180:-135:1);
    
        \node at (-7.5,2) {$v_1$};
        \draw (-6.793,1.293) .. controls (-7,1.1) and (-7,0.9).. (-6.793,0.707);
        \draw (-8.207,1.293) .. controls (-8,1.1) and (-8,0.9).. (-8.207,0.707);

        \node at (-5.75,1) {$\supset$};
        \node at (-5.75,0.75) {\tiny{as a boundary}};

        \draw (6.5,0) arc (0:45:1);
        \draw (4.5,0) arc (180:135:1);
        \draw[dotted] (6.5,0) arc (0:-180:1);
        \node at (5.5,1.5) {$*$};
        \node at (5.3,1.4) {$h$};
  
        \node at (5.5,0) {$v_2$};
        \node at (5.5,-1.5) {$\Mbar_2$};
                
        \draw[dotted] (6.5,3.5) arc (0:180:1);
        \draw (6.5,3.5) arc (0:-180:1);
        \node at (5.5,2.5) {$*$};
        \node at (5.5,2.9) {$\substack{\text{legal}\\r-2-2h}$};
        \node at (5.5,3.5) {$v_1$};
        
        \draw (6,1.5) arc (0:180:0.5);
  
        \draw[dashed] (5.5,1.5) .. controls (6,2) .. (5.5,2.5);
        
        \draw (6.207,0.707) .. controls (6,0.9) and (5.9,1.1).. (6,1.5);
        \draw (4.793,0.707) .. controls (5,0.9) and (5.1,1.1).. (5,1.5);

        \node at (3.75,1) {$\subset$};
        \node at (3.75,0.75) {\tiny{as a boundary}};
        
    \end{tikzpicture}
    \caption{In point insertion procedure we glue $\Mbar_1$ and $\Mbar_2$ together along their isomorphic boundaries $\text{bd}_{BI}$ and $\text{bd}_{AI}$. The first isomorphism follows from the decomposition property for the boundary NS nodes; the second isomorphism holds because the moduli $\Mbar^{1/r}_{0.\{r-2-2h\},\{h\}}$ (represented by the smallest bubble in the figure) is a single point. The new marked points coming from the point insertion procedure are represented by $*$; the dashed line between the new marked points indicates that they come from the same node.} 
    \label{fig:point insertion demonstration}
\end{figure}

More precisely, as shown in Figure \ref{fig:point insertion demonstration}, let $\Mbar_1$ be a moduli of $r$-spin disks, and $\text{bd}_{BI}\subset \Mbar_1$ be a boundary corresponding to an NS boundary node with twist $2h$ at the illegal half-node. We can glue to $\Mbar_1$, along the boundary $\text{bd}_{BI}$, another moduli $\Mbar_2$ which has a boundary $\text{bd}_{AI}$ isomorphic to $\text{bd}_{BI}$. Note that $\Mbar_2$ is a moduli of two disconnected $r$-spin disks, obtained by first detaching the boundary node, then "inserting" the illegal twist-$2h$ boundary marked point into the interior as a twist-$h$ internal marked point. 
At corners we can perform several point insertions at the same time, in a consistent way.

By applying this procedure, which we term \emph{the point insertion scheme}, repeatedly and iteratively, we get a glued moduli whose only remaining boundaries can be dealt with using the positivity phenomenon mentioned above.

We will define $\lfloor \frac{r}{2}\rfloor$ different point insertion theories, indexed by an integer $\h\in \{0,1,\dots,\lfloor \frac{r-2}{2}\rfloor\}$. For a chosen $\h$, we apply the point insertion scheme at an NS boundary node $n$ if and only if the twist of the illegal half-node of $n$ is less than or equal to $2\h$.

\begin{ex}
    Consider the case where $r=9$, $I=\emptyset$ and $B$ consists of three markings of twist $5$ and one marking of twist $1$ and. In this case  $\Mbar^{1/r}_{0, B,I}$ is a disjoint union of six segments, each of them corresponds to a cyclic order of $B$. We write $B$ as $\{1,5_1,5_2,5_3\}$ to make the cyclic order manifest. We refer to the new marked points coming from point insertion by their twists with a hat. The procedure of point insertion (with $\h=3$) is shown in Figure \ref{fig:r=7 B=1,5,5,5}. Together with six additional segments, $\Mbar^{1/9}_{0,\bar B,I}$ is glued to obtain a space which is homeomorphic to a circle. 
    
    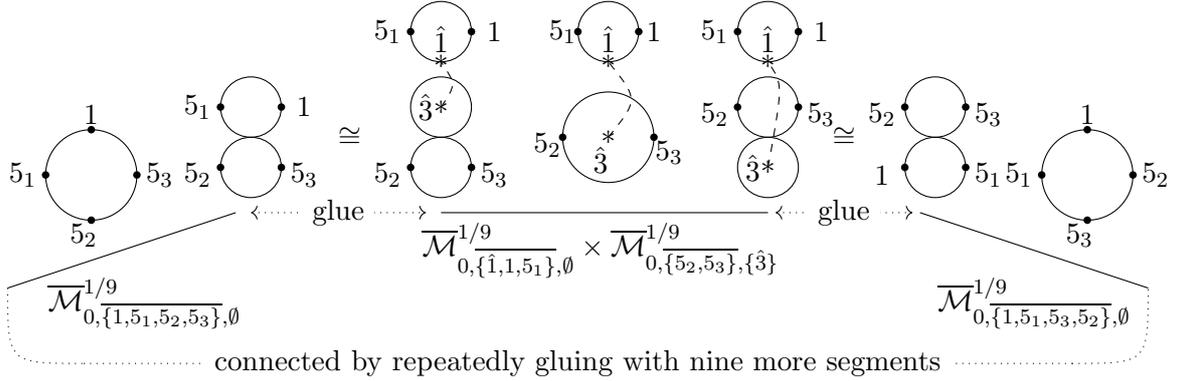
\begin{figure}[h]
        \centering
        \begin{tikzpicture}
            \draw (0,0)-- (3,1);
            \draw (1.7,1.5) arc (0:360:0.6);
            \node at (1.7,1.5)[circle,fill,inner sep=1pt]{};         \node at (2,1.5) {$5_3$};
            \node at (1.1,0.9)[circle,fill,inner sep=1pt]{};         \node at (1,0.7) {$5_2$};
            \node at (0.5,1.5)[circle,fill,inner sep=1pt]{};         \node at (0.2,1.5) {$5_1$};
            \node at (1.1,2.1)[circle,fill,inner sep=1pt]{};         \node at (1.1,2.3) {$1$};

                \draw (3.2,2) arc (90:450:0.4);
            \draw (3.2,2) arc (-90:270:0.4);
            \node at (2.8,1.6)[circle,fill,inner sep=1pt]{};   
                \node at (2.5,1.5) {$5_2$};
            \node at (2.8,2.4)[circle,fill,inner sep=1pt]{};  
                   \node at (2.5,2.4) {$5_1$};
            \node at (3.6,1.6)[circle,fill,inner sep=1pt]{};   
                 \node at (3.9,1.5) {$5_3$};
            \node at (3.6,2.4)[circle,fill,inner sep=1pt]{};  
                \node at (3.9,2.4) {$1$};

             \node at (4.5,2) {$\cong$};    

             \draw[<->,dotted] (3.2,1) to ["\text{glue}"]  (5.5,1) ;
                 
                 \draw (5.7,1)-- (10,1);

                 \draw (5.7,2) arc (90:450:0.4);
            \draw (5.7,2) arc (-90:270:0.4);
            \node at (5.3,1.6)[circle,fill,inner sep=1pt]{};   
                \node at (5.0,1.5) {$5_2$};
            \node at (5.3,3.4)[circle,fill,inner sep=1pt]{};  
                   \node at (5.0,3.4) {$5_1$};
            \node at (6.1,1.6)[circle,fill,inner sep=1pt]{};   
                 \node at (6.4,1.5) {$5_3$};
            \node at (6.1,3.4)[circle,fill,inner sep=1pt]{};  
                \node at (6.4,3.4) {$1$};
            \draw (5.7,3) arc (-90:270:0.4);

            \node at (5.7,2.4){$*$};
            \node at (5.7,3){$*$};
            \draw[dashed] (5.7,2.4) .. controls (5.9,2.7) .. (5.7,3);

            \node at (5.5,2.4) {$\hat 3$};
                  \node at (5.7,3.3) {$\hat 1$};

            \draw (10.0,2) arc (90:450:0.4);
            \draw (10.0,2) arc (-90:270:0.4);
            \node at (9.6,2.4)[circle,fill,inner sep=1pt]{};   
                \node at (9.3,2.3) {$5_2$};
            \node at (9.6,3.4)[circle,fill,inner sep=1pt]{};  
                   \node at (9.3,3.4) {$5_1$};
            \node at (10.4,2.4)[circle,fill,inner sep=1pt]{};   
                 \node at (10.7,2.3) {$5_3$};
            \node at (10.4,3.4)[circle,fill,inner sep=1pt]{};  
                \node at (10.7,3.4) {$1$};
            \draw (10.0,3) arc (-90:270:0.4);

            \node at (10.0,1.6){$*$};
            \node at (10.0,3){$*$};
            \draw[dashed] (10.0,1.6) .. controls (10.2,2.7) .. (10.0,3);

            \node at (9.8,1.6) {$\hat 3$};
                  \node at (10,3.3) {$\hat 1$};

            \draw (8.5,2) arc (0:360:0.6);
                \node at (8.5,2)[circle,fill,inner sep=1pt]{};   
                 \node at (8.7,1.8) {$5_3$};
                 \node at (7.3,2)[circle,fill,inner sep=1pt]{};   
                 \node at (7.1,1.9) {$5_2$};
                \node at (7.9,2) {$*$};

                \draw (7.9,3) arc (-90:270:0.4);
                \node at (7.5,3.4)[circle,fill,inner sep=1pt]{};  
                \node at (8.5,3.4) {$1$};
                \node at (8.3,3.4)[circle,fill,inner sep=1pt]{};  
                   \node at (7.3,3.4) {$5_1$};
                   \node at (7.9,3) {$*$};
                   \draw[dashed] (7.9,2) .. controls (8.3,2.5) .. (7.9,3);

                  \node at (7.8,1.7) {$\hat 3$};
                  \node at (7.9,3.3) {$\hat 1$};

                   \draw (12,1)-- (15,0);
                   \node at (11.0,2) {$\cong$};    

             \draw[<->,dotted] (10.1,1) to ["\text{glue}"]  (11.9,1) ;

                   \draw (12.2,2) arc (90:450:0.4);
            \draw (12.2,2) arc (-90:270:0.4);
            \node at (11.8,2.4)[circle,fill,inner sep=1pt]{};   
                \node at (11.5,2.3) {$5_2$};
            \node at (11.8,1.6)[circle,fill,inner sep=1pt]{};  
                   \node at (11.5,1.5) {$1$};
            \node at (12.6,2.4)[circle,fill,inner sep=1pt]{};   
                 \node at (12.9,2.3) {$5_3$};
            \node at (12.6,1.6)[circle,fill,inner sep=1pt]{};  
                \node at (12.9,1.5) {$5_1$};

            \draw (14.8,1.5) arc (0:360:0.6);
            \node at (14.8,1.5)[circle,fill,inner sep=1pt]{};         \node at (15.1,1.5) {$5_2$};
            \node at (14.2,0.9)[circle,fill,inner sep=1pt]{};         \node at (14.1,0.7) {$5_3$};
            \node at (13.6,1.5)[circle,fill,inner sep=1pt]{};         \node at (13.3,1.5) {$5_1$};
            \node at (14.2,2.1)[circle,fill,inner sep=1pt]{};         \node at (14.2,2.3) {$1$};

            \draw[dotted] (1,-1)  to ["\text{connected by repeatedly gluing with nine more segments}"] (14,-1);
            \draw[dotted] (0,0) .. controls (0,-1) .. (1,-1);
            \draw[dotted] ((14,-1) .. controls (15,-1) .. (15,0);

            \node at (1.8,-0.2) {$\Mbar^{1/9}_{0,\overline{\{1,5_1,5_2,5_3\}},\emptyset}$};

            \node at (13.5,-0.2) {$\Mbar^{1/9}_{0,\overline{\{1,5_1,5_3,5_2\}},\emptyset}$};

            \node at (7.8,0.5) {$\Mbar^{1/9}_{0,\overline{\{\hat 1,1,5_1\}},\emptyset}\times \Mbar^{1/9}_{0,\overline{\{5_2,5_3\}},\{\hat 3\}}$};
            
        \end{tikzpicture}
        \caption{An example of gluing $1$-dimensional moduli spaces by point insertion}
        \label{fig:r=7 B=1,5,5,5}

    \end{figure}
\end{ex}

\begin{ex}
    Consider the example where $r=2$, $I=\{0_{int}\}$ and $ B={\{0_1,0_2,0_3\}}$. As shown in Figure \ref{fig:point insertion for r=2 k=3 l=1}, the moduli $\Mbar^{1/2}_{0,\overline{\{0_1,0_2,0_3\}},0_{int}}$ is combinatorially equivalent to a hexagon. Following the point insertion procedure, another hexagon is glued to $\Mbar^{1/2}_{0,\overline{\{0_1,0_2,0_3\}},0_{int}}$ along three out of the six boundaries. Each of the other three boundaries is also connected to the other hexagon via two extra digons. All together we get a space homeomorphic to a sphere, which is glued from two hexagons and six digons. Note that $\Mbar^{1/2}_{0,\{0_1,0_2,0_3\},0_{int}}$ consists of two disconnected components, the other component $\Mbar^{1/2}_{0,\overline{\{0_1,0_3,0_2\}},0_{int}}$ is glued into another sphere, and we get two spheres in the end.
    
    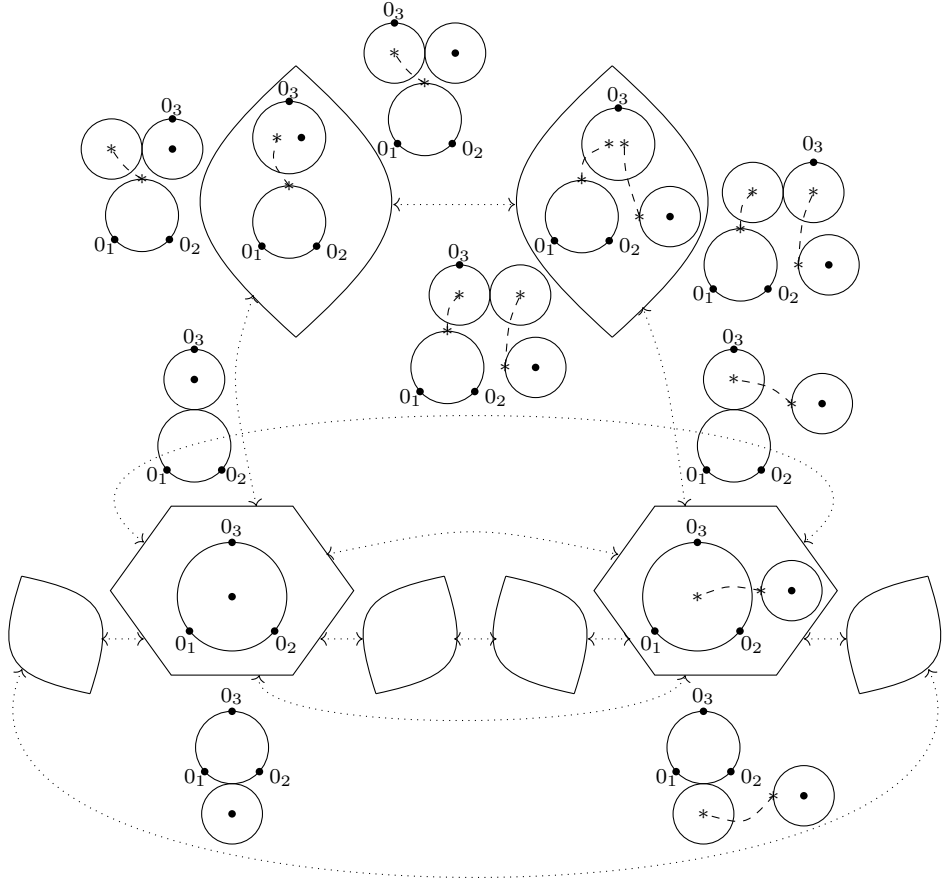
\begin{figure}[h]
         \centering
        \begin{tikzpicture}[scale=0.8,every node/.style={font=\fontsize{8}{10}\selectfont}]
    \draw (-1+1.05,0) -- (1+1.05,0) -- (2+1.05,1.4) -- (1+1.05,2.8) -- (-1+1.05,2.8) -- (-2+1.05,1.4) -- cycle;
    
    \draw (0.9+1.05,1.3) arc (0:360:0.9);
    \node at (0+1.05,1.3)[circle,fill,inner sep=1pt]{};
  
    \node at (-0.7+1.05,0.73)[circle,fill,inner sep=1pt]{};
    \node at (-0.75+1.05,0.5) {$0_{1}$};
    \node at (0.7+1.05,0.73)[circle,fill,inner sep=1pt]{};
    \node at (0.9+1.05,0.5) {$0_{2}$};
    \node at (0+1.05,2.2)[circle,fill,inner sep=1pt]{};
    \node at (0+1.05,2.45) {$0_{3}$};

    \draw (0.6+1.05,-1.2) arc (0:360:0.6);
    \node at (-0.45+1.05,-1.6)[circle,fill,inner sep=1pt]{};
    \node at (-0.6+1.05,-1.7) {$0_{1}$};
    \node at (0.45+1.05,-1.6)[circle,fill,inner sep=1pt]{};
    \node at (0.8+1.05,-1.7) {$0_{2}$};
    \node at (0+1.05,-0.6)[circle,fill,inner sep=1pt]{};
    \node at (0+1.05,-0.35) {$0_{3}$};
    
    \draw (0.5+1.05,-2.3) arc (0:360:0.5);
    \node at (0+1.05,-2.3)[circle,fill,inner sep=1pt]{};

    \draw (8,0) -- (10,0) -- (11,1.4) -- (10,2.8) -- (8,2.8) -- (7,1.4) -- cycle;
    
    \draw (9.6,1.3) arc (0:360:0.9);
    \node at (8.7,1.3){$*$};
  
    \node at (8,0.73)[circle,fill,inner sep=1pt]{};
    \node at (7.95,0.5) {$0_{1}$};
    \node at (9.4,0.73)[circle,fill,inner sep=1pt]{};
    \node at (9.6,0.5) {$0_{2}$};
    \node at (8.7,2.2)[circle,fill,inner sep=1pt]{};
    \node at (8.7,2.45) {$0_{3}$};

    \draw (10.75,1.4) arc (0:360:0.5);
    \node at (10.25,1.4)[circle,fill,inner sep=1pt]{};
  
    \node at (9.75,1.4) {$*$};

    \draw[dashed] (9.75,1.4) .. controls (9.2,1.5) .. (8.7,1.3);

    \draw (9.4,-1.2) arc (0:360:0.6);
    \node at (8.35,-1.6)[circle,fill,inner sep=1pt]{};
    \node at (8.2,-1.7) {$0_{1}$};
    \node at (9.25,-1.6)[circle,fill,inner sep=1pt]{};
    \node at (9.6,-1.7) {$0_{2}$};
    \node at (8.8,-0.6)[circle,fill,inner sep=1pt]{};
    \node at (8.8,-0.35) {$0_{3}$};
    
    \draw (9.3,-2.3) arc (0:360:0.5);
    \node at (8.8,-2.3){$*$};

    \draw (10.95,-2) arc (0:360:0.5);
    \node at (10.45,-2)[circle,fill,inner sep=1pt]{};
  
    \node at (9.95,-2) {$*$};
    \draw[dashed] (9.95,-2) .. controls (9.5,-2.5) .. (8.8,-2.3);

    \draw[dotted,<->] (1.5,0) .. controls (1.25,-1) and (8.25,-1) .. (8.5,0);

    \draw (9.9,3.8) arc (0:360:0.6);
    \node at (8.85,3.4)[circle,fill,inner sep=1pt]{};
    \node at (8.7,3.3) {$0_{1}$};
    \node at (9.75,3.4)[circle,fill,inner sep=1pt]{};
    \node at (10.1,3.3) {$0_{2}$};

    \draw (9.8,4.9) arc (0:360:0.5);
    \node at (9.3,4.9){$*$};
    \node at (9.3,5.4)[circle,fill,inner sep=1pt]{};
    \node at (9.3,5.6) {$0_{3}$};

    \draw (11.25,4.5) arc (0:360:0.5);
    \node at (10.75,4.5)[circle,fill,inner sep=1pt]{};
  
    \node at (10.25,4.5) {$*$};
    \draw[dashed] (10.25,4.5) .. controls (9.9,4.8) .. (9.3,4.9);

    \draw (9.9+0.11, 6.8) arc (0:360:0.6);
    \node at (8.85+0.11, 6.4)[circle,fill,inner sep=1pt]{};
    \node at (8.7+0.11, 6.3) {$0_{1}$};
    \node at (9.75+0.11, 6.4)[circle,fill,inner sep=1pt]{};
    \node at (10.1+0.11, 6.3) {$0_{2}$};
    
    \node at (9.3+0.11, 7.4) {$*$};
    \draw[dashed] (9.3+0.11, 7.4) .. controls (9.3+0.11, 7.8) .. (9.5+0.11, 8);
    
    \draw (10+0.11, 8) arc (0:360:0.5);
    \node at (9.5+0.11, 8){$*$};

    \draw (11.25+0.11, 6.8) arc (0:360:0.5);
    \node at (10.75+0.11, 6.8)[circle,fill,inner sep=1pt]{};
  
    \node at (10.25+0.11, 6.8) {$*$};
    \draw[dashed] (10.25+0.11, 6.8) .. controls (10.3+0.11, 7.55) .. (10.5+0.11, 8);

    \draw (11+0.11, 8) arc (0:360:0.5);
    \node at (10.5+0.11, 8){$*$};
    
    \node at (10.5+0.11, 8.5)[circle,fill,inner sep=1pt]{};
    \node at (10.5+0.11, 8.8) {$0_{3}$};

   \draw (4.4+0.2+0.59, 5.6-0.5) arc (0:360:0.6);
    \node at (3.35+0.2+0.59, 5.2-0.5)[circle,fill,inner sep=1pt]{};
    \node at (3.2+0.2+0.59, 5.1-0.5) {$0_{1}$};
    \node at (4.25+0.2+0.59, 5.2-0.5)[circle,fill,inner sep=1pt]{};
    \node at (4.6+0.2+0.59, 5.1-0.5) {$0_{2}$};

    \node at (3.8+0.2+0.59, 6.2-0.5) {$*$};
    \draw[dashed] (3.8+0.2+0.59, 6.2-0.5) .. controls (3.8+0.2+0.59, 6.6-0.5) .. (4+0.2+0.59, 6.8-0.5);
    
    \draw (4.5+0.2+0.59, 6.8-0.5) arc (0:360:0.5);
    \node at (4+0.2+0.59, 6.8-0.5){$*$};
    \node at (4+0.2+0.59, 7.3-0.5)[circle,fill,inner sep=1pt]{};
    \node at (4+0.2+0.59, 7.5-0.5) {$0_{3}$};

    \draw (5.75+0.2+0.59, 5.6-0.5) arc (0:360:0.5);
    \node at (5.25+0.2+0.59, 5.6-0.5)[circle,fill,inner sep=1pt]{};
  
    \node at (4.75+0.2+0.59, 5.6-0.5) {$*$};
    \draw[dashed] (4.75+0.2+0.59, 5.6-0.5) .. controls (4.8+0.2+0.59, 6.35-0.5) .. (5+0.2+0.59, 6.8-0.5);
    
    \draw (5.5+0.2+0.59, 6.8-0.5) arc (0:360:0.5);
    \node at (5+0.2+0.59, 6.8-0.5){$*$};

    \draw (7.4,7.6) arc (0:360:0.6);
\node at (6.35,7.2)[circle,fill,inner sep=1pt]{};
\node at (6.2,7.1) {$0_{1}$};
\node at (7.25,7.2)[circle,fill,inner sep=1pt]{};
\node at (7.6,7.1) {$0_{2}$};

\node at (6.8,8.2) {$*$};
\draw[dashed] (6.8,8.2) .. controls (6.8,8.6) .. (7.25,8.8);

\node at (7.25,8.8){$*$};
\node at (7.4,9.4)[circle,fill,inner sep=1pt]{};
\node at (7.4,9.6) {$0_{3}$};

\draw (8.75,7.6) arc (0:360:0.5);
\node at (8.25,7.6)[circle,fill,inner sep=1pt]{};

\node at (7.75,7.6) {$*$};
\draw[dashed] (7.75,7.6) .. controls (7.5,8.35) .. (7.5,8.8);

\draw (8,8.8) arc (0:360:0.6);
\node at (7.5,8.8){$*$};

\draw (7.3,10.1) .. controls (9.4,8.2) and (9.4,7.5) .. (7.3,5.6);
\draw (7.3,10.1) .. controls (5.2,8.2) and (5.2,7.5) .. (7.3,5.6);

\draw (2.1,10.1) .. controls (4.2,8.2) and (4.2,7.5) .. (2.1,5.6);
\draw (2.1,10.1) .. controls (0.0,8.2) and (0.0,7.5) .. (2.1,5.6);

  \draw (4.4+0.2-2.01, 5.6+1.91) arc (0:360:0.6);
    \node at (3.35+0.2-2.01, 5.2+1.91)[circle,fill,inner sep=1pt]{};
    \node at (3.2+0.2-2.01, 5.1+1.91) {$0_{1}$};
    \node at (4.25+0.2-2.01, 5.2+1.91)[circle,fill,inner sep=1pt]{};
    \node at (4.6+0.2-2.01, 5.1+1.91) {$0_{2}$};

    \node at (3.8+0.2-2.01, 6.2+1.91) {$*$};
    \draw[dashed] (3.8+0.2-2.01, 6.2+1.91) .. controls (3.5+0.2-2.01, 6.6+1.91) .. (3.8-2.01, 7+1.91);
    
    \draw (4.6-2.01, 7+1.91) arc (0:360:0.6);
    \node at (3.8-2.01, 7+1.91){$*$};
    \node at (4-2.01, 7.6+1.91)[circle,fill,inner sep=1pt]{};
    \node at (4-2.01, 7.8+1.91) {$0_{3}$};

    \node at (4.2-2.01, 7+1.91)[circle,fill,inner sep=1pt]{};

        \draw (4.4+0.2+0.22, 5.6+3.61) arc (0:360:0.6);
    \node at (3.35+0.2+0.22, 5.2+3.61)[circle,fill,inner sep=1pt]{};
    \node at (3.2+0.2+0.22, 5.1+3.61) {$0_{1}$};
    \node at (4.25+0.2+0.22, 5.2+3.61)[circle,fill,inner sep=1pt]{};
    \node at (4.6+0.2+0.22, 5.1+3.61) {$0_{2}$};

    \node at (3.8+0.2+0.22, 6.2+3.61) {$*$};
    \draw[dashed] (3.8+0.2+0.22, 6.2+3.61) .. controls (3.5+0.2+0.22, 6.4+3.61) .. (3.5+0.22, 6.7+3.61);
    
    \draw (4.0+0.22, 6.7+3.61) arc (0:360:0.5);
    \node at (3.5+0.22, 6.7+3.61){$*$};
    \node at (3.5+0.22, 7.2+3.61)[circle,fill,inner sep=1pt]{};
    \node at (3.5+0.22, 7.4+3.61) {$0_{3}$};

    \node at (4.5+0.22, 6.7+3.61)[circle,fill,inner sep=1pt]{};

    \draw (5.0+0.22, 6.7+3.61) arc (0:360:0.5);

    \draw (0 +1.03, 3.8) arc (0:360:0.6);
\node at (-1.05 +1.03,3.4)[circle,fill,inner sep=1pt]{};
\node at (-1.2 +1.03,3.3) {$0_{1}$};
\node at (-0.15 +1.03,3.4)[circle,fill,inner sep=1pt]{};
\node at (0.1 +1.03,3.3) {$0_{2}$};

\draw (-0.1 +1.03,4.9) arc (0:360:0.5);
\node at (-0.6 +1.03,4.9)[circle,fill,inner sep=1pt]{};
\node at (-0.6 +1.03,5.4)[circle,fill,inner sep=1pt]{};
\node at (-0.6 +1.03,5.6) {$0_{3}$};

   \draw (4.4+0.2-4.43, 5.6+2.02) arc (0:360:0.6);
    \node at (3.35+0.2-4.43, 5.2+2.02)[circle,fill,inner sep=1pt]{};
    \node at (3.2+0.2-4.43, 5.1+2.02) {$0_{1}$};
    \node at (4.25+0.2-4.43, 5.2+2.02)[circle,fill,inner sep=1pt]{};
    \node at (4.6+0.2-4.43, 5.1+2.02) {$0_{2}$};

    \node at (3.8+0.2-4.43, 6.2+2.02) {$*$};
    \draw[dashed] (3.8+0.2-4.43, 6.2+2.02) .. controls (3.5+0.2-4.43, 6.4+2.02) .. (3.5-4.43, 6.7+2.02);
    
    \draw (4.0-4.43, 6.7+2.02) arc (0:360:0.5);
    \node at (3.5-4.43, 6.7+2.02){$*$};
    \node at (4.5-4.43, 7.2+2.02)[circle,fill,inner sep=1pt]{};
    \node at (4.5-4.43, 7.4+2.02) {$0_{3}$};

    \node at (4.5-4.43, 6.7+2.02)[circle,fill,inner sep=1pt]{};

    \draw (5.0-4.43, 6.7+2.02) arc (0:360:0.5);

    \draw[dotted, <->] (0.7+0.75,2.8) .. controls (1,5) .. (1.4,6.3);

    \draw[dotted, <->] (-0.7+9.2,2.8) .. controls (-1+9.2,5) .. (-1.4+9.2,6.1);

    \draw[dotted, <->] (3.7,7.8) -- (5.7,7.8);

    \draw (3.65+1.05-7.11, 5.05-3.41) .. controls ({3.65 + 0.866*(4.7-3.65) - 0.500*(4.1-5.05)+1.05-7.11}, {5.05 + 0.500*(4.7-3.65) + 0.866*(4.1-5.05)-3.41}) and ({3.65 + 0.866*(4.7-3.65) - 0.500*(3.75-5.05)+1.05-7.11}, {5.05 + 0.500*(4.7-3.65) + 0.866*(3.75-5.05)-3.41}) .. ({3.65 + 0.866*(3.65-3.65) - 0.500*(2.8-5.05)+1.05-7.11}, {5.05 + 0.500*(3.65-3.65) + 0.866*(2.8-5.05)-3.41});
    \draw (3.65+1.05-7.11, 5.05-3.41) .. controls ({3.65 + 0.866*(2.6-3.65) - 0.500*(4.1-5.05)+1.05-7.11}, {5.05 + 0.500*(2.6-3.65) + 0.866*(4.1-5.05)-3.41}) and ({3.65 + 0.866*(2.6-3.65) - 0.500*(3.75-5.05)+1.05-7.11}, {5.05 + 0.500*(2.6-3.65) + 0.866*(3.75-5.05)-3.41}) .. ({3.65 + 0.866*(3.65-3.65) - 0.500*(2.8-5.05)+1.05-7.11}, {5.05 + 0.500*(3.65-3.65) + 0.866*(2.8-5.05)-3.41});

    \draw (3.65+8.84, 5.05-3.41) .. controls ({3.65 + 0.866*(4.7-3.65) + 0.500*(4.1-5.05)+8.84}, {5.05 - 0.500*(4.7-3.65) + 0.866*(4.1-5.05)-3.41}) and ({3.65 + 0.866*(4.7-3.65) + 0.500*(3.75-5.05)+8.84}, {5.05 - 0.500*(4.7-3.65) + 0.866*(3.75-5.05)-3.41}) .. ({3.65 + 0.866*(3.65-3.65) + 0.500*(2.8-5.05)+8.84}, {5.05 - 0.500*(3.65-3.65) + 0.866*(2.8-5.05)-3.41});
    \draw (3.65+8.84, 5.05-3.41) .. controls ({3.65 + 0.866*(2.6-3.65) + 0.500*(4.1-5.05)+8.84}, {5.05 - 0.500*(2.6-3.65) + 0.866*(4.1-5.05)-3.41}) and ({3.65 + 0.866*(2.6-3.65) + 0.500*(3.75-5.05)+8.84}, {5.05 - 0.500*(2.6-3.65) + 0.866*(3.75-5.05)-3.41}) .. ({3.65 + 0.866*(3.65-3.65) + 0.500*(2.8-5.05)+8.84}, {5.05 - 0.500*(3.65-3.65) + 0.866*(2.8-5.05)-3.41});

     \draw (3.65+1.90, 5.05-3.41) .. controls ({3.65 + 0.866*(4.7-3.65) - 0.500*(4.1-5.05)+1.90}, {5.05 + 0.500*(4.7-3.65) + 0.866*(4.1-5.05)-3.41}) and ({3.65 + 0.866*(4.7-3.65) - 0.500*(3.75-5.05)+1.90}, {5.05 + 0.500*(4.7-3.65) + 0.866*(3.75-5.05)-3.41}) .. ({3.65 + 0.866*(3.65-3.65) - 0.500*(2.8-5.05)+1.90}, {5.05 + 0.500*(3.65-3.65) + 0.866*(2.8-5.05)-3.41});
    \draw (3.65+1.90, 5.05-3.41) .. controls ({3.65 + 0.866*(2.6-3.65) - 0.500*(4.1-5.05)+1.90}, {5.05 + 0.500*(2.6-3.65) + 0.866*(4.1-5.05)-3.41}) and ({3.65 + 0.866*(2.6-3.65) - 0.500*(3.75-5.05)+1.90}, {5.05 + 0.500*(2.6-3.65) + 0.866*(3.75-5.05)-3.41}) .. ({3.65 + 0.866*(3.65-3.65) - 0.500*(2.8-5.05)+1.90}, {5.05 + 0.500*(3.65-3.65) + 0.866*(2.8-5.05)-3.41});

    \draw (3.65+1.05-0.16, 5.05-3.41) .. controls ({3.65 + 0.866*(4.7-3.65) + 0.500*(4.1-5.05)+1.05-0.16}, {5.05 - 0.500*(4.7-3.65) + 0.866*(4.1-5.05)-3.41}) and ({3.65 + 0.866*(4.7-3.65) + 0.500*(3.75-5.05)+1.05-0.16}, {5.05 - 0.500*(4.7-3.65) + 0.866*(3.75-5.05)-3.41}) .. ({3.65 + 0.866*(3.65-3.65) + 0.500*(2.8-5.05)+1.05-0.16}, {5.05 - 0.500*(3.65-3.65) + 0.866*(2.8-5.05)-3.41});
    \draw (3.65+1.05-0.16, 5.05-3.41) .. controls ({3.65 + 0.866*(2.6-3.65) + 0.500*(4.1-5.05)+1.05-0.16}, {5.05 - 0.500*(2.6-3.65) + 0.866*(4.1-5.05)-3.41}) and ({3.65 + 0.866*(2.6-3.65) + 0.500*(3.75-5.05)+1.05-0.16}, {5.05 - 0.500*(2.6-3.65) + 0.866*(3.75-5.05)-3.41}) .. ({3.65 + 0.866*(3.65-3.65) + 0.500*(2.8-5.05)+1.05-0.16}, {5.05 - 0.500*(3.65-3.65) + 0.866*(2.8-5.05)-3.41});

    \draw[dotted, <->] (2.5,0.6) -- (3.2,0.6);

    \draw[dotted, <->] (4.7,0.6) -- (5.4,0.6);

    \draw[dotted, <->] (6.9,0.6) -- (7.6,0.6);

     \draw[dotted, <->] (10.45,0.6) -- (11.15,0.6);

    \draw[dotted, <->] (-1.1,0.6) -- (-0.4,0.6);

    \draw[dotted, <->] (2.6,2) .. controls (5,2.5) ..  (7.4,2);

    \draw[dotted, <->] (-2.4,0.1) .. controls (-4.4,-4.5) and (14.5,-4.5) .. (12.5,0.1);

    \draw[dotted, <->] (-0.4, 2.2) .. controls (-4,5) and (14.05,5) .. (10.45,2.2);

\end{tikzpicture} 
        \caption{An example of gluing $2$-dimensional moduli spaces by point insertion. We omit some twists of markings because they are all equal to $0$.}
        \label{fig:point insertion for r=2 k=3 l=1}
    \end{figure}

\end{ex}

\subsection{Reduced and unreduced $(r,\h)$-surfaces}

The moduli spaces of disconnected $r$-spin surfaces together with dashed lines encoding the point insertion procedure, as the ones illustrated in Figures \ref{fig:r=7 B=1,5,5,5} and \ref{fig:point insertion for r=2 k=3 l=1}, play a crucial role in our construction.
\begin{dfn}\label{dfn rh graph}
    an $(r,\h)$-surface is a collection of legal connected level-$\h$ stable graded $r$-spin surfaces (the components), together with
    \begin{enumerate}
        \item a bijection (denoted by dashed lines) between a subset $B^{PI}$ of the boundary tails and  a subset $I^{PI}$ of the internal tails  which are not contracted boundary nodes,  where the twist $a$ and $b$ for paired internal tail and boundary tail satisfies $a+2b=r-2$ and $0\le a\le \h$;
        \item markings on the set of unpaired internal tails $I^{up}$ (which are not contracted boundary nodes) and boundary tails $B^{up}$, \textit{i.e.} identifications $I^{up}=\{1,2,\dots,\lvert I^{up}\rvert\}$ and $B^{up}=\{1,2,\dots,\lvert B^{up}\rvert\}$.
    \end{enumerate}
    We require that the union of all stable graded $r$-spin surfaces in the collection are connected via the dashed lines. We also require that, in the collection, there exists no genus-zero stable graded $r$-spin surfaces with only two tails, where both of these two tails are in $I^{PI}\sqcup B^{PI}$.
\end{dfn}

\begin{dfn}
    An \textit{after-insertion (AI) node} of an $(r,\h)$-surface is an NS boundary node whose twist on the illegal half-node is greater than $2\h$, and the irreducible component containing the legal half-node only contains this half-node and an internal tail in $I^{PI}$.

    A \textit{before-insertion (BI) node} of an $(r,\h)$-surface is an NS boundary node whose twist on the illegal half-node is greater than $2\h$, but is not an after-insertion node.
\end{dfn}

\begin{dfn}
    We say two $(r,\h)$-surfaces $S_C$ and $S_D$ are \textit{related by point insertion} if 
    \begin{itemize}
        \item  $S_C$ has a component $C_1^{BI}$ with a before-insertion node $n$, by normalizing this node we obtain a nonlegal connected level-$\h$ stable graded $r$-spin surfaces $\hat D_1$ and a legal connected level-$\h$ stable graded $r$-spin surfaces $D_2$; we denote by $h_1\in \hat D_1$ and $h_2\in D_2$ the illegal and legal half-nodes corresponding to $n$;
        \item $S_D$ has a component isomorphic to $D_2$ and a component $D_1^{AI}$ with an after-insertion node $n'$ such that, after normalizing $n'$,  the irreducible component containing the illegal half-node is isomorphic to $\hat D_1$, and the internal tail on the irreducible component containing the legal half-node is paired with $h_2\in D_2$;
        \item all components of $S_C$ and $S_D,$ but $C_1^{BI}$, $D_1^{AI}$ and $D_2,$ are one-to-one identical.
    \end{itemize}
\end{dfn}

\begin{dfn}\label{def reduced rh surface}
    We define an equivalence relation $\sim_{PI}$ for $(r,\h)$-surfaces:
     two $(r,\h)$-surfaces $S_1$ and $S_2$ are equivalent under $\sim_{PI}$ if there exists a chain of $(r,\h)$-surfaces $T_1,T_2,\dots,T_i$ such that $T_1=S_1$, $T_i=S_2$, $T_j$, and $T_{j+1}$ are related by point insertion for all $1\le j\le i-1$. 
     
    A \textit{reduced $(r,\h)$-surface} is an equivalence class of $\sim_{PI}$.
\end{dfn}
 
 \begin{figure}[h]
         \centering

         \begin{subfigure}{.45\textwidth}
  \centering
\begin{tikzpicture}[scale=0.45]
\draw (0,-0.5) circle (1);
\draw (-1,-0.5) arc (180:360:1 and 0.333);
\draw[dashed](1,-0.5) arc (0:180:1 and 0.333);

\draw (-1.5,-3) arc (180:360:1.5 and 0.5);
\draw[dashed](1.5,-3) arc (0:180:1.5 and 0.5);
\draw(1.5,-3) arc (0:180:1.5);

\draw (1.5,-3) arc (180:360:1 and 0.333);
\draw[dashed](3.5,-3) arc (0:180:1 and 0.333);
\draw(3.5,-3) arc (0:180:1);

\node at (1.5,-3) [circle,fill,inner sep=1pt]{};

\draw (4.5,-3) arc (180:360:2 and 0.667);
\draw[dashed](8.5,-3) arc (0:180:2 and 0.667);
\draw(8.5,-3) arc (0:180:2);

\node at (3.5,-3) [circle,fill,inner sep=1pt]{};
\node at (6.5,-2){*};
\draw[dashed] (3.5,-3) .. controls (5.5,-1.5) ..(6.5,-2);

\node at (1.5,-4.3){$C_1^{BI}$};

\end{tikzpicture} 
\end{subfigure}
\begin{subfigure}{.45\textwidth}
  \centering
\begin{tikzpicture}[scale=0.45]
\draw (0,-0.5) circle (1);
\draw (-1,-0.5) arc (180:360:1 and 0.333);
\draw[dashed](1,-0.5) arc (0:180:1 and 0.333);

\draw (-1.5,-3) arc (180:360:1.5 and 0.5);
\draw[dashed](1.5,-3) arc (0:180:1.5 and 0.5);
\draw(1.5,-3) arc (0:180:1.5);

\draw (1.5,-3) arc (180:360:0.5 and 0.167);
\draw[dashed](2.5,-3) arc (0:180:0.5 and 0.167);
\draw(2.5,-3) arc (0:180:0.5);

\fill[color = gray, opacity = 0.5] (1.5,-3) arc (180:360:0.5 and 0.167) arc (0:180:0.5);

\draw (4,-3) arc (180:360:1 and 0.333);
\draw[dashed](6,-3) arc (0:180:1 and 0.333);
\draw(6,-3) arc (0:180:1);

\draw (7,-3) arc (180:360:2 and 0.667);
\draw[dashed](11,-3) arc (0:180:2 and 0.667);
\draw(11,-3) arc (0:180:2);

\node at (6,-3) [circle,fill,inner sep=1pt]{};
\node at (9,-2){*};
\draw[dashed] (6,-3) .. controls (8,-1.5) ..(9,-2);

\node at (4,-3) [circle,fill,inner sep=1pt]{};

\node at (2,-2.8){*};

\draw[dashed] (4,-3) .. controls (3,-2) ..(2,-2.8);

\node at (1,-4.3){$ D_1^{AI}$};
\node at (5,-4.3){$ D_2$};

\end{tikzpicture} 
\end{subfigure}

        \caption{An example of two $(r,\h)$-surfaces related by point insertion, where we shaded the irreducible component which only contains a legal half-node (corresponding to an after-insertion node) and an internal tail in $I^{PI}$. }
        \label{fig rh surface}
    \end{figure}
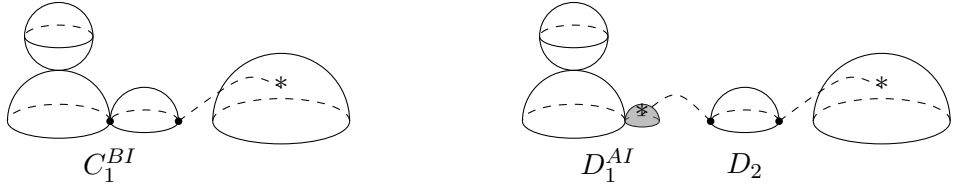

Note that $\sim_{PI}$ does not lift to the universal curve, but, as we saw, it does lift to the associated Witten and relative cotangent bundles.
\subsection{$(r,\h)$-graphs}

 We define the combinatorial objects associated with $(r,\h)$-surfaces.

\begin{dfn}
For $0\le \h \le \lfloor\frac{r}{2}\rfloor-1$, a \textit{$(r,\h)$-graph}  $\mathbf{G}$ consists of 
\begin{itemize} 
    \item a set $V(\mathbf{G})$ of connected legal level-$\h$ stable graded $r$-spin graphs with at least one open vertex or contracted boundary tail;
    \item two partitions of sets
    $$
    \bigsqcup_{\Gamma\in V(\mathbf{G})}\left(T^I(\Gamma)\backslash H^{CB}(\Gamma)\right)=I(\mathbf{G})\sqcup I^{PI}(\mathbf{G})
    $$
    and
    $$
    \bigsqcup_{\Gamma\in V(\mathbf{G})}T^B(\Gamma)=B(\mathbf{G})\sqcup B^{PI}(\mathbf{G});
    $$
    \item a set of edges (the dashed lines) $$E(\mathbf{G})\subseteq \{(a,b)\colon a\in I^{PI}(\mathbf{G}),b\in B^{PI}(\mathbf{G}), 2a+b=r-2\}$$ which induces an one-to-one correspondence $\delta$ between $I^{PI}(\mathbf{G})$ and $B^{PI}(\mathbf{G})$;
    \item a labelling of the set $I(\mathbf{G})$ by $\{1,2,\dots,l(\mathbf{G}):=\lvert I(\mathbf{G})\rvert\}$ and a labelling of the set $B(\mathbf{G})$ by $\{1,2,\dots,k(\mathbf{G}):=\lvert B(\mathbf{G})\rvert\}$.
\end{itemize}
    We require that 
    \begin{enumerate}
        \item there exists no $\Gamma\in V(\mathbf{G}),~g(\Gamma)=0$ satisfying 
    $
    H^I(\Gamma)\subseteq I^{PI}(\mathbf{G}), H^B(\Gamma)\subseteq B^{PI}(\mathbf{G})$ 
    and $
         \lvert H^I(\Gamma)\rvert+ \lvert H^B(\Gamma\rvert \le 2;
    $
    \end{enumerate}
    We define an auxiliary graph (in the normal sense) $\hat{\mathbf{G}}$ in the following way: the set of vertices of $\hat{\mathbf{G}}$ is $V(\mathbf{G})$, the set of edges of $\hat{\mathbf{G}}$ is $E(\mathbf{G})$; an element $(a,b)\in E(\mathbf{G})$ corresponds to an edge between the vertices $\Gamma_a$ and $\Gamma_b$, where  $a\in T^I(\Gamma_a)$ and $b\in T^B(\Gamma_b)$. We also require that
\begin{enumerate}[resume*]
    \item  the graph $\hat{\mathbf{G}}$ is a connected graph.
\end{enumerate}

\end{dfn}

 We define the genus of $\mathbf{G}$ to be
$$
g(\mathbf{G}):=\sum_{\Gamma\in V(\mathbf{G})} g(\Gamma) +g(\hat{\mathbf{G}});
$$
where $g(\hat{\mathbf{G}})$ is the genus of graph in the normal sense.

\begin{figure}[h]
  \centering

\begin{tikzpicture}[scale=1, every text node part/.style={align=center},]
\vspace{0.15cm}
  \node[rectangle, draw, very thick, minimum size=1]    (open1)    {$\hat g=0$\\$n=1$};
  \node[above = 0.5cm of open1, circle, draw, very thick, minimum size=1]    (closed)    {$\hat g=0$};
  \node[right = 0.5cm of open1, rectangle, draw, very thick, minimum size=1]    (opensmall)    {$\hat g=0$\\$n=1$};
  \draw [double] (open1) to (closed);
  \draw  (open1) to (opensmall);
  \node[right = 0.3cm of opensmall]    (internal1)    {};
  \draw [double] (opensmall) to (internal1);
  \node[right = 1cm of internal1]    (boundary1)    {};
  \node[left = 0cm of internal1]    (internal1p)    {};
  \node[left = 0cm of boundary1]    (boundary1p)    {};
  \draw [dashed] (boundary1) to (internal1p);

  \node[right = 0.3cm of boundary1p, rectangle, draw, very thick, minimum size=1]    (open2)    {$\hat g=0$\\$n=1$};
  \draw (boundary1p) to (open2);

    \node[right = 0.3cm of open2]    (boundary2)    {};
    \draw (boundary2) to (open2);

    \node[left = 0cm of boundary2]    (boundary2p)    {};
    \node[right = 1cm of boundary2]    (internal2)    {};
    \draw [dashed] (boundary2p) to (internal2);
    \node[left = 0cm of internal2]    (internal2p)    {};
    \node[right = 0.3cm of internal2, rectangle, draw, very thick, minimum size=1]    (open3)    {$\hat g=0$\\$n=1$};
     \draw [double] (internal2p) to (open3);

\end{tikzpicture}
\vspace{0.15cm}

  \caption{The $(r,\h)$-graph corresponding to the $(r,\h)$-surface on the right in Figure \ref{fig rh surface}.}
\end{figure}
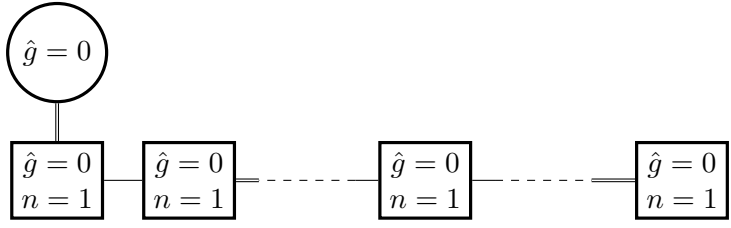

\begin{dfn}
    An isomorphism between two $(r,\h)$-graphs $\mathbf{G_1}$ and $\mathbf{G_2}$ consists of a collection of isomorphism of stable graded $r$-spin graphs between elements of $V(\mathbf{G_1})$ and $V(\mathbf{G_2})$, which induces a bijection between $V(\mathbf{G_1})$ and $V(\mathbf{G_2})$, and preserves the partitions, dashed lines, and labellings.
\end{dfn}

\begin{dfn}
Let $\mathbf{G}$ be an $(r,\h)$-graph. Let $e$ be an edge or a contracted boundary tail of some $\Gamma\in V(\mathbf{G})$. Since $T^I(\Gamma)\backslash H^{CB}(\Gamma)=T^I(d_e \Gamma)\backslash H^{CB}(d_e \Gamma)$ and $T^B(\Gamma)=T^B(d_e \Gamma)$, we define the \textit{smoothing} of $\mathbf{G}$ along $e$ to be the $(r,\h)$-graph $d_e \mathbf{G}$ obtained by replacing $\Gamma$ with $d_e \Gamma$.

We say $\mathbf{G}$ is smooth if all $\Gamma\in V(\mathbf{G})$ are smooth stable graded $r$-spin graphs.
We denote by $\GPI^{r,\h}_g$ the set of all genus-g $(r,\h)$-graphs.
\end{dfn}

\subsection{The moduli space of (non reduced) $(r,\h)$-surfaces}

For each $\mathbf{G}\in \GPI^{r,\h}_g$, let $\operatorname{Aut} \mathbf{G}$ be the group of automorphisms of $\mathbf{G}\in \GPI^{r,\h}_g$, then there is a natural action of $\operatorname{Aut} \mathbf{G}$ over the product $\prod_{\Gamma\in V(\mathbf{G})} \Mbar_\Gamma$. We define
$$
\Mbar_\mathbf{G}:=\left(\prod_{\Gamma\in V(\mathbf{G})} \Mbar_\Gamma\right)\bigg\slash \operatorname{Aut} \mathbf{G}
$$
and, when $\bm{G}$ is genus-zero or genus-one, 
$$
\oQMb_\mathbf{G}:=\left(\prod_{\Gamma\in V(\mathbf{G})} \oQMb_\Gamma\right)\bigg\slash \operatorname{Aut} \mathbf{G}.
$$

In the genus-zero or genus-one case,
denoting by $\mathcal W_\Gamma$ the Witten bundle over $\oQMb_\Gamma$, we define the Witten bundle $\mathcal W_\mathbf{G}$ over $\oQMb_\mathbf{G}$ to be 
$$
 \mathcal W_\mathbf{G}:= \left(\bboxplus_{\Gamma\in V(\mathbf{G})}\mathcal W_\Gamma\right)\bigg\slash \operatorname{Aut} \mathbf{G}.
$$
\begin{rmk}
Note that for a genus-zero or genus-one $(r,\h)$-graph $\mathbf{G}$, the automorphism group $\operatorname{Aut}\mathbf{G}$ is always trivial. 

\end{rmk}
Let $o_\Gamma$ be the canonical relative orientation of $\mathcal W_\Gamma$ over $\oQMb_\Gamma$, we define the canonical relative orientation $o_\mathbf{G}$ of $\mathcal W_\mathbf{G}$ over $\oQMb_\mathbf{G}$ by
\begin{equation}\label{eq orientation point insertion}
    o_\mathbf{G}:=(-1)^{\lvert E(\mathbf{G})\rvert}\bigwedge_{\Gamma\in V(\mathbf{G})}o_\Gamma;
\end{equation}
Observe that $o_\mathbf{G}$ is independent of the order of the wedge product since for each $\Gamma\in V(\mathbf{G})$ we have 
$$
\dim \Mbar_\Gamma \equiv \operatorname{rank} W_\Gamma \mod 2.
$$
\begin{dfn} Given an integer $\h\in \{0,1,\dots,\lfloor\frac{r-2}{2}\rfloor\}$, a finite set $I$ of internal twists lying in $\{0,1,\dots,r-1\}$ and a finite set $B$ of boundary twists lying in $\{r-2-2\h,r-2\h,\dots,r-4,r-2\}$, we define the (unglued) moduli space $\Mbar^{\frac{1}{r},\h}_{g,B,I}$ of genus-$g$  $(r,\h)$-surfaces labelled by $B,I$  to be
\begin{equation}\label{eq def moduli point insertion}
    \Mbar^{\frac{1}{r},\h}_{g,B,I}:=\bigsqcup_{\substack{\mathbf{G}\in \GPI^{r,\h}_g, \mathbf{G} \text{ smooth}\\I(\mathbf{G})=I,B(\mathbf{G})=B}}\Mbar_\mathbf{G}.
\end{equation}
In the case $g=0,1$, we also define
$$\oQMb^{\frac{1}{r},\h}_{g,B,I}:=\bigsqcup_{\substack{\mathbf{G}\in \GPI^{r,\h}_g, \mathbf{G} \text{ smooth}\\I(\mathbf{G})=I,B(\mathbf{G})=B}}\oQMb_\mathbf{G};$$
the Witten bundles with relative orientations over the connected components of $\oQMb^{\frac{1}{r},\h}_{g,B,I}$ induce the Witten bundle $\mathcal W^{\frac{1}{r},\h}_{g,B,I}$ over $\oQMb^{\frac{1}{r},\h}_{g,B,I}$ with relative orientation.
\end{dfn}

\subsubsection*{Relative cotangent line bundles}
For $i\in I$, we denote by $\mathbb L_i\to \prod_{\Gamma\in V(\mathbf{G})} \Mbar_\Gamma$ the line bundle pulled back from $\mathbb L_i \to \Mbar_{\Gamma_i}$ via the projection, where $\Gamma_i$ is the unique graded $r$-spin graph in $V(\mathbf{G})$ such that $i\in H^I(\Gamma_i)$. Since the action of $\operatorname{Aut}\mathbf{G}$ on $\prod_{\Gamma\in V(\mathbf{G})} \Mbar_\Gamma$ can be naturally lifted to $\mathbb L_i$, we have a relative cotangent line bundle 
$
\mathbb L_i \to \Mbar_\mathbf{G}
$
on the quotient space $\Mbar_\mathbf{G}=\left(\prod_{\Gamma\in V(\mathbf{G})} \Mbar_\Gamma\right)\big\slash \operatorname{Aut} \mathbf{G}$, and therefore a relative cotangent line bundle 
$\mathbb L_i \to \Mbar^{\frac{1}{r},\h}_{g,B,I}.$

In the case $\h=0$, there is also a modified relative cotangent line bundle 
$\check{\mathbb L}_i \to \Mbar^{\frac{1}{r},\h=0}_{g,B,I}.$
Let $\Gamma'_i$ be the graded $r$-spin graph obtained from $\Gamma_i$ after forgetting all the half-edges in $I^{PI}$ (which have twist zero), and let $\operatorname{For}_{I^{PI}}\colon \Mbar_{\Gamma_i}\to \Mbar_{\Gamma'_i}$ be the forgetful morphism. Then we denote by 
$\check{\mathbb L}_i:=\operatorname{For}^*_{I^{PI}}\mathbb L_i\to \Mbar_{\Gamma_i}$ the line bundle pulled back from $\mathbb L_i\to  \Mbar_{\Gamma'_i}$ via this forgetful morphism, and by
$\check{\mathbb L}_i\to \prod_{\Gamma\in V(\mathbf{G})} \Mbar_\Gamma$ the line bundle pulled back via projection. The $\operatorname{Aut} \mathbf{G}$-action also lift naturally to $\check{\mathbb L}_i$, thus we have $\check{\mathbb L}_i \to \Mbar_\mathbf{G}$ and $\check{\mathbb L}_i \to \Mbar^{\frac{1}{r},\h=0}_{g,B,I}.$

\subsection{Boundary Strata}
For  an $(r,\h)$-graph $\mathbf{G}$, we write
$$
E(\mathbf{G}):=\bigsqcup_{\Gamma\in V(\mathbf{G})} E(\Gamma)
$$
and 
$$
H^{CB}(\mathbf{G}):= \bigsqcup_{\Gamma\in V(\mathbf{G})}  H^{CB}(\Gamma).
$$
For a set 
$S\subseteq E(\mathbf{G})\sqcup H^{CB}(\mathbf{G})$, 
one can perform a sequence of smoothings, and the graph obtained is independent of the order in which those smoothings are performed;  denote the result by $d_S \mathbf{G}$.  Let
\begin{align*}
&\partial^!\mathbf{G} = \{\mathbf{H} \; | \; \mathbf{G} = d_S\mathbf{H} \text{ for some } S\},\\
&\partial \mathbf{G} = {\partial}^!\mathbf{G} \setminus \{\mathbf{G}\},\\
&\partial^B \mathbf{G} = \{\mathbf{H} \in {\partial} \mathbf{G} \; | \; E^B(\mathbf{H}) \cup G^{CB}(\mathbf{H}) \neq \emptyset\}.
\end{align*}
For a stable graded $r$-spin graph $\Gamma$, we can also define $\partial^!{\Gamma}$, $\partial \Gamma$ and $\partial^B{\Gamma}$ in the same way.

For a smooth $(r,\h)$-graph $\mathbf{G}$, a boundary stratum of $\Mbar_{\bm{G}}$ corresponds to a graph in $\partial^!\mathbf{G}$, or more precisely, a choice of $\Delta_i\in \partial^!\Gamma_i$ for each $\Gamma_i\in V(\mathbf{G})$. In particular,  a codimension-$1$ boundary of $\Mbar_\mathbf{G}$ is determined by a choice of $\Gamma\in V(\mathbf{G})$ and a graph $\Delta \in \partial^B \Gamma$, where $\Delta$ has either one contracted boundary tail and no edges, or exactly one edge which is a boundary edge. There are five different types of codimension-$1$ boundaries of $\Mbar_\mathbf{G}$ depending on the type of the (half-)edge of $\Delta$:

\begin{enumerate}
    \item[{CB}] contracted boundary tails;
    \item[{R}] Ramond boundary edges;
    \item[{NS+}] NS boundary edges whose twist on the illegal side is greater than $2\h$;
    \item[AI] NS boundary edges whose twist on the illegal side is less than or equal to $2\h$, and the vertex containing the legal half-node only contains this half-edge and an internal tail $a\in I(\Gamma)\cap I^{PI}(\mathbf{G})$;
    \item[BI] the remaining NS boundary edges whose twist on the illegal side is less than or equal to $2\h$.
\end{enumerate}

Therefore, the codimension-$1$ boundary of $\Mbar^{\frac{1}{r},\h}_{g,B,I}$ is a union of five different types of boundaries. We claim that there is a one-to-one correspondence between the AI boundaries and the BI boundaries.

 \begin{thm}\label{thm: PI boundaries paried}
 For fixed $I$ and $B$, there is a one-to-one correspondence $\PI$ between the BI boundaries and the AI boundaries of $\Mbar^{\frac{1}{r},\h}_{g,B,I}$. Two boundaries paired by the correspondence $\PI$ are canonically diffeomorphic. 
 
 In the $g=0$ case, this diffeomorphism can be extended to a diffeomorphism between the closures of these boundaries in $\Mbar^{\frac{1}{r},\h}_{g,B,I}$, which can be further lifted to the Witten bundles and the relative cotangent line bundles restricted to closures. Moreover, the relative orientations on the paired boundaries induced by the canonical relative orientations are opposite to each other.

    In the $g=1$ case, the diffeomorphism between a type-AI boundary $\text{bd}_{AI}$ and a type-BI boundary $\text{bd}_{BI}$ can be extend to a surjective morphism from the closure $\overline{\text{bd}}_{AI}$ to $\overline{\text{bd}}_{BI}$, which sends  $\overline{\text{bd}}_{AI}\cap \oQMb^{\frac{1}{r},\h}_{1,B,I}$ surjectively onto $\overline{\text{bd}}_{BI}\cap \oQMb^{\frac{1}{r},\h}_{1,B,I}$; and similar to the $g=0$ case, this restricted surjective morphism pulls back the Witten bundles and the relative cotangent line bundles restricted to them. Moreover, the relative orientations on the paired boundaries induced by the canonical relative orientations are opposite to each other. 
   
 \end{thm}

\begin{proof}
 Let $\Mbar_{\mathbf{G_C}}$ be a connected component of $\Mbar^{\frac{1}{r},\h}_{g,B,I}$ and $\text{bd}_{BI}$ be a codimension-$1$ BI boundary of $\Mbar_{\mathbf{G_C}}$ given by $\Gamma_{C_1}\in V(\mathbf{G_C})$ and $\Gamma_{C_1^{BI}}\in {\partial}^!\Gamma_{C_1}$ as on the left-hand side of Figure \ref{fig rh surface}, we construct an $(r,\h)$-graph $\mathbf{G_D}$ such that $\Mbar_{\mathbf{G_D}}$ has an BI boundary $\text{bd}_{AI}$ which is canonically diffeomorphic to $\text{bd}_{BI}$.
 
 We first consider the case where the BI boundary edge $e$ of $\Gamma_{C_1^{BI}}$ is separating.
Let $v_1,v_2$ and $e$ be the vertices of $\Gamma_{C_1^{BI}}$ connected by $e$, we denote by $h_1,h_2$ the corresponding half-edges of $e$ on $v_1,v_2$ and assume that $h_1$ is illegal and $h_1$ is legal. 

Since $e$ is a BI edge we have $\text{tw}(h_1)\le 2\h$ and hence $\text{tw}(h_2)\ge r-2-2\h$, therefore we can regard the vertex $v_2$ as a legal level-$\h$ stable graded $r$-spin graph, we denote it by $\Gamma_{D_2}$. We define a new legal level-$\h$ stable graded $r$-spin graph $\Gamma_{D_1}$ by removing $h_1$ in $v_1$ and adding a new internal tail $\hat h_1$ with $\tw(\hat h_1)=\tw(h_1)/2$. We define a new $(r,\h)$-graph $\mathbf{G_D}$ by replacing $\Gamma_{C_1}$ in $V(\mathbf{G_C})$ with $\Gamma_{D_1},\Gamma_{D_2}$ and adding a new dashed line $(\hat h_1,h_2)$. By construction, we have $\Mbar_{\mathbf{G_D}}\subseteq \Mbar^{\frac{1}{r},\h}_{g,B,I}$. 

Let $\Gamma_{D_1^{AI}}\in {\partial}^!\Gamma_{D_1}$ be the graded $r$-spin graph with two vertices $u_1$ and $u_2$ connected by a boundary $e'$, where $u_1$ is the same as $v_1$, and the only half-edges on $u_2$ are one internal tail $\hat h_1$ and one boundary half-edge $h'_2$. Since $$\tw(\hat h_1)=\frac{\tw(h_1)}{2}\le \h \le \frac{r-2}{2},$$
by \eqref{eq legal mod 2} we have
$\alt(h'_2)=1$ and $\tw(h'_2)=r-2-2\tw(\hat h_1)\ge r-2-2\h$, therefore $e'$ is an AI boundary edge and $\Gamma_{D_1^{BI}}$ determine an AI boundary $\text{bd}_{AI}$ of $\Mbar_{\mathbf{G_D}}$.

The above process is reversible. If $\Gamma_{D_1^{AI}}\in {\partial}^!\Gamma_{D_1}$ determines an AI boundary of $\mathbf{G_D}$ as above, then we take $\Gamma_{D_2}$ to be the vertex of $\mathbf{G_D}$ containing the boundary half-edge $\delta(\hat h_1)$ ($\delta$ is the map induced by the dashed lines as in Definition \ref{dfn rh graph}). We can obtain $\Gamma_{C_1^{BI}}$ by replacing $u_2$ with $\Gamma_{D_2}$ in $\Gamma_{D_1^{AI}}$, then we smooth it to get $\Gamma_{C_1}$. By replacing $\Gamma_{D_1},\Gamma_{D_1}\in V(\mathbf{G_D})$ with $\Gamma_{C_1}$, we get $\mathbf{G_C}$.

For a vertex $v$ in the stable graded $r$-spin dual graph, we denote by $\Mstar_v$ the moduli of $r$-spin surfaces (possibly with illegal boundary markings) corresponding to $v$. Since $e$ corresponds to a boundary NS node, by Remark \ref{rmk decompose NS boundary node}
we have a diffeomorphism
\begin{equation*}
\begin{split}
\text{bd}_{BI}&=\M_{\Gamma_{C_1^{BI}}}\times\prod_{\Gamma_C\in V(\mathbf{G_C})\backslash \{\Gamma_{C_1}\}}\M_{\Gamma_C}\\
&\cong \Mstar_{v_1}\times\M_{v_2}\times\prod_{\Gamma_C\in V(\mathbf{G_C})\backslash \{\Gamma_{C_1}\}}\M_{\Gamma_C}.
\end{split}
\end{equation*}
Similarly, we have 
\begin{equation*}
\begin{split}
\text{bd}_{AI}&=\M_{\Gamma_{D_1^{AI}}}\times\M_{\Gamma_{D_2}}\times\prod_{\Gamma_{D}\in V(\mathbf{G_D})\backslash \{\Gamma_{D_1},\Gamma_{D_2}\}}\M_{\Gamma_D}\\
&\cong \Mstar_{u_1}\times \M_{u_2}\times\M_{\Gamma_{D_2}}\times\prod_{\Gamma_D\in V(\mathbf{G_D})\backslash \{\Gamma_{D_1},\Gamma_{D_2}\}}\M_{\Gamma_D}.
\end{split}
\end{equation*}
Note that by construction we have $v_1=u_1$, $v_2=\Gamma_{D_2}$ and $V(\mathbf{G_C})\backslash \{\Gamma_{C_1}\}=V(\mathbf{G_D})\backslash \{\Gamma_{D_1},\Gamma_{D_2}\}$. Since $\M_{u_2}$ is a single point we have a natural diffeomorphism
\begin{equation}\label{eq iso bdry}
\varphi^{PI}\colon\text{bd}_{AI} \xrightarrow{\sim} \text{bd}_{BI}.
\end{equation}
In this case with separating node we can prove the diffeomorphism between the closures
\begin{equation}\label{eq iso bdry closure}
\bar{\varphi}^{PI}_{sp}\colon\overline{\text{bd}}_{AI} \xrightarrow{\sim} \overline{\text{bd}}_{BI}.
\end{equation}
in the same way as we have $\Mbar_{\Gamma_{D_1^{AI}}}\cong \Mbarstar_{v_1}\times\Mbar_{v_2}\cong \Mbar_{\Gamma_{D_1^{AI}}}\times\Mbar_{\Gamma_{D_2}}$.

In the case where the BI boundary edge $e$ of $\Gamma_{C_1^{BI}}$ is non-separating, we still denote by $v_1$ the vertex containing the illegal half-edge $h_1$ after detaching $e$. We can define $\Gamma_{D_1}$ by replacing $h_1$ with a new internal tail $\hat h_1$ in the same way as in the previous case, as well as $\Gamma_{D_1^{AI}}\in {\partial}^!\Gamma_{D_1}$. Note that in this case, the legal half-edge $h_2$ is also contained within $\Gamma_{D_1}$, instead of in another graded $r$-spin graph $\Gamma_{D_2}$. The $(r,\h)$-graph $\mathbf{G_D}$ in this case is obtained by replacing $\Gamma_{C_1}\in V(\mathbf{G_C})$ with $\Gamma_{D_1}$ and adding a new dashed line $(\hat h_1,h_2)$. According to Remark \ref{rmk decompose NS boundary node} we have $\M_{\Gamma_{C_1^{BI}}}\cong \M_{\Gamma_{D_1^{AI}}}$, which indicates we also have a diffeomorphism \eqref{eq iso bdry} in this case. However, note that \eqref{eq iso bdry closure} does not hold in this case with non-separating node since $\Mbar_{\Gamma_{D_1^{AI}}} \ne \Mbar_{\Gamma_{C_1^{BI}}}$ in general: we only have a surjective morphism sending $\Mbar_{\Gamma_{D_1^{AI}}}$ to $\Mbar_{\Gamma_{C_1^{BI}}}$, and thus a surjective morphism
\begin{equation}\label{eq surj between bds}
    \bar{\varphi}^{PI}_{nsp}\colon  \overline{\text{bd}}_{AI} \rightarrow \hspace{-8pt} \rightarrow\overline{\text{bd}}_{BI}.
\end{equation}

Note that in the $g=0$ case, we do not have non-separating edges so the diffeomorphisms extend to the closures for all paired boundaries. According to Remark \ref{rmk decompose NS boundary node}, the corresponding Witten bundles and the relative cotangent line bundles are also isomorphic.
By Corollary \ref{cor:orientation open open}, the  relative orientation (of the Witten bundle) on  $$\M_{\Gamma_{C_1^{BI}}}=\Mstar_{v_1}\times\M_{v_2}=\Mstar_{v_1}\times\M_{\Gamma_{D_2}}$$ induced by the canonical relative orientation $o_{\Gamma_{C_1}}$ on $\M_{\Gamma_{C_1}}$ is $o_{v_1}^{h_1}\boxtimes o_{\Gamma_{D_2}}$, and the relative orientation on $$\M_{\Gamma_{D_1^{AI}}}=\Mstar_{u_1}\times \M_{u_2}\times\M_{\Gamma_{D_2}}=\Mstar_{v_1}\times\M_{\Gamma_{D_2}}$$ induced by the  relative orientation $o_{\Gamma_{D_1}}\boxtimes o_{\Gamma_{D_2}}$  on $\M_{\Gamma_{D_1}}\times \M_{\Gamma_{D_2}}$ is also $o_{v_1}^{h_1}\boxtimes o_{\Gamma_{D_2}}$. Then the moreover part about relative orientation follows from \eqref{eq orientation point insertion} and the fact that $\vert E(\mathbf{G_D})\vert=\vert E(\mathbf{G_C})\vert +1$.

The proof in the $g=1$ case is identical to the $g=0$ case after showing that morphism \eqref{eq iso bdry closure} or \eqref{eq surj between bds} sends $\overline{\text{bd}}_{AI}\cap \oQMb^{\frac{1}{r},\h}_{1,B,I}$ to $\overline{\text{bd}}_{BI}\cap \oQMb^{\frac{1}{r},\h}_{1,B,I}$ surjectively. Actually, since $g=1$, all except at most one graphs in $\Gamma_C\in V(\mathbf{G_C})$ are genus-zero. There are three possibilities.
\begin{itemize}
    \item 
    If all of graphs in $V(\mathbf{G_C})$ are genus-zero, then both $\overline{\text{bd}}_{BI}$ and $\overline{\text{bd}}_{AI}$ are entirely contained in $ \oQMb^{\frac{1}{r},\h}_{1,B,I}$, hence $\overline{\text{bd}}_{BI}\cap \oQMb^{\frac{1}{r},\h}_{1,B,I}=\overline{\text{bd}}_{BI}$ and $\overline{\text{bd}}_{AI}\cap \oQMb^{\frac{1}{r},\h}_{1,B,I}=\overline{\text{bd}}_{AI}$. 
    
    \item 
    If one of $\Gamma_C^{g=1}\in V(\mathbf{G_C})\backslash \{\Gamma_{C_1}\}$ is genus-one, then we have 
\begin{equation*}
\overline{\text{bd}}_{BI} \cap \oQMb^{\frac{1}{r},\h}_{1,B,I}\cong \Mbarstar_{v_1}\times\Mbar_{v_2}\times \oQMb_{\Gamma_C^{g=1}}\times\prod_{\Gamma_C\in V(\mathbf{G_C})\backslash \{\Gamma_{C_1},\Gamma_C^{g=1}\}}\Mbar_{\Gamma_C},
\end{equation*}
which is diffeomorphism (via \eqref{eq iso bdry closure}) to \begin{equation*}
\overline{\text{bd}}_{AI}\cap\oQMb^{\frac{1}{r},\h}_{1,B,I}\cong \Mbarstar_{u_1}\times \Mbar_{u_2}\times\Mbar_{\Gamma_{D_2}}\times \oQMb_{\Gamma_D^{g=1}}\times\prod_{\Gamma_D\in V(\mathbf{G_D})\backslash \{\Gamma_{D_1},\Gamma_{D_2},\Gamma_D^{g=1}\}}\Mbar_{\Gamma_D},
\end{equation*}
where $\Gamma_D^{g=1}\in V(\mathbf{G_D})$ is the graph corresponding to $\Gamma_C^{g=1}\in V(\mathbf{G_C})$.
\item If the graph $\Gamma_{C_1}$ is genus-one, there are two cases.
\begin{itemize}
    \item The edge $e\in E(C^{BI}_1)$ is non-separating. In this case we have $\oJMb_{\Gamma_{C_1}}\cap \Mbar_{C^{BI}_1}=\emptyset$ since $e$ is type-AI (notice that all non-separating edges in a  graph intersecting the dimension-jump locus are Ramond), therefore $\textbf{bd}_{BI}$ and $\text{bd}_{AI}$ are entirely contained in $ \oQMb^{\frac{1}{r},\h}_{1,B,I}$, hence $\overline{\text{bd}}_{BI}\cap \oQMb^{\frac{1}{r},\h}_{1,B,I}=\overline{\text{bd}}_{BI}$ and $\overline{\text{bd}}_{AI}\cap \oQMb^{\frac{1}{r},\h}_{1,B,I}=\overline{\text{bd}}_{AI}$.
    \item The edge $e\in E(C^{BI}_1)$ is separating. We assume $v_1$ is genus-one (the case $v_2$ is genus-one is similar). We have 
    \begin{equation*}
    \overline{\text{bd}}_{BI}\cap\oQMb^{\frac{1}{r},\h}_{1,B,I}\cong \oQMb_{v_1}\times\Mbar_{v_2}\times\prod_{\Gamma_C\in V(\mathbf{G_C})\backslash \{\Gamma_{C_1}\}}\Mbar_{\Gamma_C},
    \end{equation*}
which is diffeomorphism to 
\begin{equation*}
\overline{\text{bd}}_{AI}\cap\oQMb^{\frac{1}{r},\h}_{1,B,I}\cong \oQMb_{u_1}\times \Mbar_{u_2}\times\Mbar_{\Gamma_{D_2}}\times\prod_{\Gamma_D\in V(\mathbf{G_D})\backslash \{\Gamma_{D_1},\Gamma_{D_2}\}}\Mbar_{\Gamma_D}.
\end{equation*}
\end{itemize}

\end{itemize}

The claim about the relative orientation follows from Corollary \ref{cor:orientation open open}, Corollary \ref{cor:orientation open open g=1} and Corollary \ref{cor:orientation relative nonseperating} in the same way as the $g=0$ case.

\end{proof}

\subsection{The moduli space of reduced $(r,\h)$-surfaces}
    Let $\sim_{PI}$ be the equivalent relation induced by the correspondence $\PI$ on the boundaries of $\Mbar^{\frac{1}{r},\h}_{g,B,I}$, where $p_{AI}\in \overline{\text{bd}}_{AI}$ is equivalent to $p_{BI}\in \overline{\text{bd}}_{AI}$ if $p_{BI}$ is the image of $p_{AI}$ under the morphism \eqref{eq iso bdry closure} or \eqref{eq surj between bds}. Theorem \ref{thm: PI boundaries paried} shows that we can glue $\Mbar^{\frac{1}{r},\h}_{g,B,I}$ along the paired boundaries and obtain a piecewise smooth glued moduli space
    $$
    \widetilde{\mathcal M}^{\frac{1}{r},\h}_{g,B,I}:=\Mbar^{\frac{1}{r},\h}_{g,B,I}\big/\sim_{PI}
    $$
    parametrizing the reduced genus-$g$ $(r,\h)$-surfaces  (see Definition \ref{def reduced rh surface}) whose unpaired boundary and internal tails are marked by $B$ and $I$. 
    Similarly, we can glue $\oQMb^{\frac{1}{r},\h}_{1,B,I}\subseteq\Mbar^{\frac{1}{r},\h}_{1,B,I}$ to $\widetilde{\mathcal QM}^{\frac{1}{r},\h}_{1,B,I}\subseteq \widetilde{\mathcal M}^{\frac{1}{r},\h}_{1,B,I}$.
    Note that $\widetilde{\mathcal M}^{\frac{1}{r},\h}_{g,B,I}$ or $\widetilde{\mathcal QM}^{\frac{1}{r},\h}_{1,B,I}$ has only boundaries of type CB, R and NS+.

    The Witten bundles and the relative cotangent line bundles over the different connected components of $\oQMb^{\frac{1}{r},\h}_{0,B,I}:=\Mbar^{\frac{1}{r},\h}_{0,B,I}$ or $\oQMb^{\frac{1}{r},\h}_{1,B,I}$  can also be glued along the same boundaries, by Theorem \ref{thm: PI boundaries paried}. By the same theorem $\widetilde{\mathcal W}\to \widetilde{\mathcal QM}^{\frac{1}{r},\h}_{1,B,I}$ or $\widetilde{\mathcal QM}^{\frac{1}{r},\h}_{0,B,I}:=\widetilde{\mathcal M}^{\frac{1}{r},\h}_{0,B,I}$, the glued Witten bundle, is canonically relatively oriented.
    \begin{rmk}
In the case $r=2,\h=0$ and only NS insertions the Witten bundle is a trivial zero rank bundle. In this case the idea of gluing different moduli spaces to obtain an orbifold without boundary is due to Jake Solomon and the first named author \cite{ST_unpublished}. \cite{ST_unpublished} worked with a different definition of the glued cotangent lines, and related that construction to the construction of \cite{PST14}.
\end{rmk}

    The direct sum $E\oplus E$ of any two copies of a vector bundle $E\to\widetilde{\mathcal QM}^{\frac{1}{r},\h}_{g,B,I}$ carries a canonical orientation. Indeed, any oriented basis $(v_1,\ldots,v_{\text{rk}(E)})$ for a fibre $E_p,$ induces an oriented basis
    \[
    (v^{(1)}_1,\ldots,v^{(1)}_{\text{rk}(E)},
    v^{(2)}_1,\ldots,v^{(2)}_{\text{rk}(E)})
    \] for $(E\oplus E)_p$
    where $v_j^{(i)}$ is the copy of $v_j$ in the $i$th summand, $i=1,2,~j=1,\ldots,\text{rk}(E).$ We will orient that fibre $(E\oplus E)_p$ via this basis.
    If $v'_1,\ldots,v'_{\text{rk}(E)}$ is another basis for $E_p,$ and $A$ is the transition matrix between the two bases, then the transition matrix between the induced bases of $(E\oplus E)_p$ is the block matrix
    \[\begin{pmatrix}
A &0\\0 &A\end{pmatrix}.\]Its determinant is $\det(A)^2,$ which is positive. Hence this choice of orientation for $E\oplus E$ is independent of the choices of basis, and extends globally. 
As a consequence, the fibres of $(\widetilde{{\mathcal{W}}})^{2d}$ are canonically oriented for every natural $d.$
    
    The glued relative cotangent line bundles over $\widetilde{\mathcal M}^{\frac{1}{r},\h}_{g,B,I}$ still carry the canonical complex orientations.

    Combining all the above, we obtain the following theorem:
    \begin{thm}\label{thm:main_or}
      For $g=0,1$, all bundles of the form
    \[(\widetilde{{\mathcal{W}}})^{2d+1}\oplus\bigoplus_{i=1}^l\mathbb{L}_i^{\oplus d_i}\to \widetilde{\mathcal QM}^{\frac{1}{r},\h}_{g,B,I}\]are canonically relatively oriented.  
    \end{thm}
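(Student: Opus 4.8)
The plan is to deduce this directly from the canonical relative orientation of the glued Witten bundle $\widetilde{{\mathcal{W}}}\to\widetilde{\mathcal M}^{\frac{1}{r},\h}_{0,B,I}$ established above (which itself rests on the ``moreover'' part of Theorem~\ref{thm: PI boundaries paried}), together with the two elementary canonical orientations already recorded: that $(\widetilde{{\mathcal{W}}})^{2d}$ is canonically (absolutely) oriented for every $d$, and that each $\mathbb{L}_i$, being a complex line bundle, carries a canonical orientation. Here I use that a relative orientation of a vector bundle $F\to M$ is, by definition, an orientation of the real line bundle $\det F\otimes\det TM$; the odd exponent $2d+1$ is exactly what makes the statement \emph{relative} rather than absolute, since it is the last, lone, copy of $\widetilde{{\mathcal{W}}}$ that will consume the relative orientation.

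First I would split off a single Witten factor, writing
\[
(\widetilde{{\mathcal{W}}})^{2d+1}\oplus\bigoplus_{i=1}^l\mathbb{L}_i^{\oplus d_i}
\;=\;(\widetilde{{\mathcal{W}}})^{2d}\ \oplus\ \widetilde{{\mathcal{W}}}\ \oplus\ \bigoplus_{i=1}^l\mathbb{L}_i^{\oplus d_i},
\]
and use multiplicativity of the determinant over direct sums to obtain a canonical isomorphism of real line bundles
\[
\det\Bigl((\widetilde{{\mathcal{W}}})^{2d+1}\oplus\textstyle\bigoplus_{i}\mathbb{L}_i^{\oplus d_i}\Bigr)\otimes\det T\widetilde{\mathcal M}^{\frac{1}{r},\h}_{0,B,I}
\;\cong\;
\det\bigl((\widetilde{{\mathcal{W}}})^{2d}\bigr)\otimes\Bigl(\textstyle\bigotimes_{i}(\det\mathbb{L}_i)^{\otimes d_i}\Bigr)\otimes\bigl(\det\widetilde{{\mathcal{W}}}\otimes\det T\widetilde{\mathcal M}^{\frac{1}{r},\h}_{0,B,I}\bigr).
\]
On the right, the first factor is canonically oriented since $(\widetilde{{\mathcal{W}}})^{2d}$ is (as remarked above, any even power of a real bundle has a canonical, choice-free orientation); the middle factor is canonically oriented by the complex orientations of the $\mathbb{L}_i$; and the last factor is exactly the orientation line of the canonical relative orientation of $\widetilde{{\mathcal{W}}}$. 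Since a tensor product of oriented real line bundles is canonically oriented, and this orientation is insensitive to the order of the factors (they are all $\pm$-choices on lines), the left-hand side inherits an orientation, which is the asserted canonical relative orientation.

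The only point genuinely requiring care — and the step I expect to be the ``main obstacle'', although it has in fact already been handled — is that $\widetilde{\mathcal M}^{\frac{1}{r},\h}_{0,B,I}$ is merely a piecewise-smooth glued space, so one must check that the constructed relative orientation is globally well defined, i.e.\ compatible across the gluing locus $\sim_{PI}$. This reduces to compatibility factor by factor: the factor $\det\widetilde{{\mathcal{W}}}\otimes\det T\widetilde{\mathcal M}^{\frac{1}{r},\h}_{0,B,I}$ glues precisely because Theorem~\ref{thm: PI boundaries paried} asserts that the relative orientations induced on the paired BI and AI boundaries are opposite, which is exactly the sign needed when identifying two boundary strata with reversed outward normals; the factors $\det\bigl((\widetilde{{\mathcal{W}}})^{2d}\bigr)$ and $\bigotimes_i(\det\mathbb{L}_i)^{\otimes d_i}$ glue automatically, since their orientations are intrinsic to the globally defined bundles $\widetilde{{\mathcal{W}}}$ and $\mathbb{L}_i$ over $\widetilde{\mathcal M}^{\frac{1}{r},\h}_{0,B,I}$ and involve no auxiliary choice that could jump. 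Hence no sign analysis beyond Theorem~\ref{thm: PI boundaries paried} is needed, and the theorem follows.
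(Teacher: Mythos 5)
Your proof is correct and follows essentially the same route as the paper: split $(\widetilde{{\mathcal{W}}})^{2d+1}$ into $(\widetilde{{\mathcal{W}}})^{2d}\oplus\widetilde{{\mathcal{W}}}$, orient the even power canonically, orient each $\mathbb{L}_i$ by its complex structure, and let the single remaining copy of $\widetilde{{\mathcal{W}}}$ supply the relative orientation coming from Theorem~\ref{thm: PI boundaries paried}. The paper states this in compressed form; you have merely made the determinant-line bookkeeping and the gluing-compatibility check explicit, without introducing a different argument.
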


\bibliographystyle{abbrv}
\bibliography{OpenBiblio}

@article{fixedPointStack,
	AUTHOR = {Romagny, Matthieu},
     TITLE = {Group actions on stacks and applications},
   JOURNAL = {Michigan Math. J.},
  FJOURNAL = {Michigan Mathematical Journal},
    VOLUME = {53},
      YEAR = {2005},
    NUMBER = {1},
     PAGES = {209--236},
      ISSN = {0026-2285,1945-2365},
   MRCLASS = {14A20 (14H10)},
  MRNUMBER = {2125542},
MRREVIEWER = {Ivan\ S.\ Kausz},
       DOI = {10.1307/mmj/1114021093},
       URL = {https://doi.org/10.1307/mmj/1114021093},
}

@article{Moc06,
	AUTHOR = {Mochizuki, Takuro},
     TITLE = {The virtual class of the moduli stack of stable {$r$}-spin
              curves},
   JOURNAL = {Comm. Math. Phys.},
  FJOURNAL = {Communications in Mathematical Physics},
    VOLUME = {264},
      YEAR = {2006},
    NUMBER = {1},
     PAGES = {1--40},
      ISSN = {0010-3616,1432-0916},
   MRCLASS = {14H10 (14N35)},
  MRNUMBER = {2211733},
MRREVIEWER = {Dan\ Abramovich},
       DOI = {10.1007/s00220-006-1538-3},
       URL = {https://doi.org/10.1007/s00220-006-1538-3},
}

@article{FSZ10,
	AUTHOR = {Faber, Carel and Shadrin, Sergey and Zvonkine, Dimitri},
     TITLE = {Tautological relations and the {$r$}-spin {W}itten conjecture},
   JOURNAL = {Ann. Sci. \'{E}c. Norm. Sup\'{e}r. (4)},
  FJOURNAL = {Annales Scientifiques de l'\'{E}cole Normale Sup\'{e}rieure.
              Quatri\`eme S\'{e}rie},
    VOLUME = {43},
      YEAR = {2010},
    NUMBER = {4},
     PAGES = {621--658},
      ISSN = {0012-9593,1873-2151},
   MRCLASS = {14N35 (14H10 53D45)},
  MRNUMBER = {2722511},
MRREVIEWER = {Hsian-Hua\ Tseng},
       DOI = {10.24033/asens.2130},
       URL = {https://doi.org/10.24033/asens.2130},
}

@article{FJR,
	AUTHOR = {Fan, Huijun and Jarvis, Tyler and Ruan, Yongbin},
     TITLE = {The {W}itten equation, mirror symmetry, and quantum
              singularity theory},
   JOURNAL = {Ann. of Math. (2)},
  FJOURNAL = {Annals of Mathematics. Second Series},
    VOLUME = {178},
      YEAR = {2013},
    NUMBER = {1},
     PAGES = {1--106},
      ISSN = {0003-486X,1939-8980},
   MRCLASS = {81T70 (14B05 14N35 35Q53)},
  MRNUMBER = {3043578},
MRREVIEWER = {Johannes\ Walcher},
       DOI = {10.4007/annals.2013.178.1.1},
       URL = {https://doi.org/10.4007/annals.2013.178.1.1},
}

@article{ChiodoStable,
	AUTHOR = {Chiodo, Alessandro},
     TITLE = {Stable twisted curves and their {$r$}-spin structures},
   JOURNAL = {Ann. Inst. Fourier (Grenoble)},
  FJOURNAL = {Universit\'{e} de Grenoble. Annales de l'Institut Fourier},
    VOLUME = {58},
      YEAR = {2008},
    NUMBER = {5},
     PAGES = {1635--1689},
      ISSN = {0373-0956,1777-5310},
   MRCLASS = {14H10 (14H60)},
  MRNUMBER = {2445829},
MRREVIEWER = {Arvid\ Siqveland},
       URL = {http://aif.cedram.org/item?id=AIF_2008__58_5_1635_0},
}

@article{ChiodoWitten,
	 AUTHOR = {Chiodo, Alessandro},
     TITLE = {The {W}itten top {C}hern class via {$K$}-theory},
   JOURNAL = {J. Algebraic Geom.},
  FJOURNAL = {Journal of Algebraic Geometry},
    VOLUME = {15},
      YEAR = {2006},
    NUMBER = {4},
     PAGES = {681--707},
      ISSN = {1056-3911,1534-7486},
   MRCLASS = {14H10 (14C35 14N35)},
  MRNUMBER = {2237266},
MRREVIEWER = {Dan\ Abramovich},
       DOI = {10.1090/S1056-3911-06-00444-9},
       URL = {https://doi.org/10.1090/S1056-3911-06-00444-9},
}

@incollection{PV,
	AUTHOR = {Polishchuk, Alexander and Vaintrob, Arkady},
     TITLE = {Algebraic construction of {W}itten's top {C}hern class},
 BOOKTITLE = {Advances in algebraic geometry motivated by physics ({L}owell,
              {MA}, 2000)},
    SERIES = {Contemp. Math.},
    VOLUME = {276},
     PAGES = {229--249},
 PUBLISHER = {Amer. Math. Soc., Providence, RI},
      YEAR = {2001},
      ISBN = {0-8218-2810-X},
   MRCLASS = {14C17 (14H10 14N35)},
  MRNUMBER = {1837120},
MRREVIEWER = {Tyler\ J.\ Jarvis},
       DOI = {10.1090/conm/276/04523},
       URL = {https://doi.org/10.1090/conm/276/04523},
}

@article{CLL,
	AUTHOR = {Chang, Huai-Liang and Li, Jun and Li, Wei-Ping},
     TITLE = {Witten's top {C}hern class via cosection localization},
   JOURNAL = {Invent. Math.},
  FJOURNAL = {Inventiones Mathematicae},
    VOLUME = {200},
      YEAR = {2015},
    NUMBER = {3},
     PAGES = {1015--1063},
      ISSN = {0020-9910,1432-1297},
   MRCLASS = {14D23 (14H99)},
  MRNUMBER = {3348143},
MRREVIEWER = {Stefan\ Schr\"{o}er},
       DOI = {10.1007/s00222-014-0549-5},
       URL = {https://doi.org/10.1007/s00222-014-0549-5},
}

@unpublished{ST_unpublished,
    author = {Solomon, Jake P. and Tessler, Ran J.},
    note = {Unpublished} 
}

@incollection{Witten93,
	AUTHOR = {Witten, Edward},
     TITLE = {Algebraic geometry associated with matrix models of
              two-dimensional gravity},
 BOOKTITLE = {Topological methods in modern mathematics ({S}tony {B}rook,
              {NY}, 1991)},
     PAGES = {235--269},
 PUBLISHER = {Publish or Perish, Houston, TX},
      YEAR = {1993},
   MRCLASS = {32G15 (14H15 81T40)},
  MRNUMBER = {1215968},
MRREVIEWER = {Claude\ Itzykson},
}

@article{Jarvis,
	AUTHOR = {Jarvis, Tyler J.},
     TITLE = {Geometry of the moduli of higher spin curves},
   JOURNAL = {Internat. J. Math.},
  FJOURNAL = {International Journal of Mathematics},
    VOLUME = {11},
      YEAR = {2000},
    NUMBER = {5},
     PAGES = {637--663},
      ISSN = {0129-167X,1793-6519},
   MRCLASS = {14H10},
  MRNUMBER = {1780734},
MRREVIEWER = {Jos\'{e}\ M.\ Mu\~{n}oz Porras},
       DOI = {10.1142/S0129167X00000325},
       URL = {https://doi.org/10.1142/S0129167X00000325},
}

@article{PST14,
  title={Intersection theory on moduli of disks, open KdV and Virasoro},
  author={Pandharipande, Rahul and Solomon, Jake P and Tessler, Ran J},
  journal={Geometry \& Topology},
  volume={28},
  number={6},
  pages={2483--2567},
  year={2024},
  publisher={Mathematical Sciences Publishers}
}

@article{BCT3,
  AUTHOR = {Buryak, Alexandr and Clader, Emily and Tessler, Ran J.},
     TITLE = {Open {$r$}-spin theory {III}: {A} prediction for higher genus},
   JOURNAL = {J. Geom. Phys.},
  FJOURNAL = {Journal of Geometry and Physics},
    VOLUME = {192},
      YEAR = {2023},
     PAGES = {Paper No. 104960, 12},
      ISSN = {0393-0440,1879-1662},
   MRCLASS = {14H10},
  MRNUMBER = {4629749},
       DOI = {10.1016/j.geomphys.2023.104960},
       URL = {https://doi.org/10.1016/j.geomphys.2023.104960},
}

@article{KontsManin,
  AUTHOR = {Kontsevich, M. and Manin, Yu.},
     TITLE = {Gromov-{W}itten classes, quantum cohomology, and enumerative
              geometry},
   JOURNAL = {Comm. Math. Phys.},
  FJOURNAL = {Communications in Mathematical Physics},
    VOLUME = {164},
      YEAR = {1994},
    NUMBER = {3},
     PAGES = {525--562},
      ISSN = {0010-3616,1432-0916},
   MRCLASS = {14N10 (53C15 58D10 58F05)},
  MRNUMBER = {1291244},
MRREVIEWER = {Dietmar\ A.\ Salamon},
       URL = {http://projecteuclid.org/euclid.cmp/1104270948},
}

@unpublished{Zernik,
	Author = {{Netser Zernik}, A.},
	Note = {arXiv preprint arXiv:1709.07402},
	Title = {{M}oduli of {O}pen {S}table {M}aps to a {H}omogeneous {S}pace},
	Year = {2017}}

@article{BCT_Closed_Extended,
	AUTHOR = {Buryak, Alexandr and Clader, Emily and Tessler, Ran J.},
     TITLE = {Closed extended {$r$}-spin theory and the {G}elfand-{D}ickey
              wave function},
   JOURNAL = {J. Geom. Phys.},
  FJOURNAL = {Journal of Geometry and Physics},
    VOLUME = {137},
      YEAR = {2019},
     PAGES = {132--153},
      ISSN = {0393-0440,1879-1662},
   MRCLASS = {14H10 (35Q53 37K10)},
  MRNUMBER = {3894288},
MRREVIEWER = {Dmitry\ Zakharov},
       DOI = {10.1016/j.geomphys.2018.11.007},
       URL = {https://doi.org/10.1016/j.geomphys.2018.11.007},
}

@article{BCT1,
	AUTHOR = {Buryak, Alexandr and Clader, Emily and Tessler, Ran J.},
     TITLE = {Open {$r$}-spin theory {I}: {F}oundations},
   JOURNAL = {Int. Math. Res. Not. IMRN},
  FJOURNAL = {International Mathematics Research Notices. IMRN},
      YEAR = {2022},
    NUMBER = {14},
     PAGES = {10458--10532},
      ISSN = {1073-7928,1687-0247},
   MRCLASS = {14H10},
  MRNUMBER = {4458550},
MRREVIEWER = {Alex\ Massarenti},
       DOI = {10.1093/imrn/rnaa345},
       URL = {https://doi.org/10.1093/imrn/rnaa345},
}

@article{PPZ,
	 AUTHOR = {Pandharipande, Rahul and Pixton, Aaron and Zvonkine, Dimitri},
     TITLE = {Relations on {$\overline{\mathcal M}_{g,n}$} via {$3$}-spin
              structures},
   JOURNAL = {J. Amer. Math. Soc.},
  FJOURNAL = {Journal of the American Mathematical Society},
    VOLUME = {28},
      YEAR = {2015},
    NUMBER = {1},
     PAGES = {279--309},
      ISSN = {0894-0347,1088-6834},
   MRCLASS = {14H10 (14N35)},
  MRNUMBER = {3264769},
MRREVIEWER = {Shengmao\ Zhu},
       DOI = {10.1090/S0894-0347-2014-00808-0},
       URL = {https://doi.org/10.1090/S0894-0347-2014-00808-0},
}

@unpublished{GKT,
  title={Mirror Symmetry for open $r$-spin invariants},
  author={Gross, Mark and Kelly, Tyler L and Tessler, Ran J},
  Note={to appear in \textit{Pure Appl. Math. Q.}, arXiv:2203.02423},
  year={2022}
}

@unpublished{BCT2,
	Author = {Buryak, Alexandr and Clader, Emily and Tessler, Ran J.},
	Note = {to appear in \textit{J. Differential Geom.}, arXiv:1809.02536v4},
	Title = {Open $r$-spin theory {I}{I}: {T}he analogue of {W}itten's conjecture for $r$-spin disks},
	Year = {2018}}

@unpublished{TZ2,
	Author = {Tessler, Ran J. and Zhao, Yizhen},
	note = {arXiv preprint arXiv:2311.11779},
	Title = {The point insertion technique and open $r$-spin theories {II}: intersection theories in genus-zero},
	Year = {2023}}

@unpublished{TZ3,
  title={Open $r$-spin theory in genus one, and the {G}elfand--{D}ikii wave function},
  author={Tessler, Ran J. and Zhao, Yizhen},
  Note = {arXiv preprint arXiv:2601.04114},
    Year = {2026}
}

\end{document}